\newtheorem{theorem}{Theorem}[section]
\newtheorem{lemma}[theorem]{Lemma}
\newtheorem{proposition}[theorem]{Proposition}
\theoremstyle{definition}
\theoremstyle{remark}
\numberwithin{equation}{section}
\newcommand{\GL}{{\mathrm {GL}}}
\newcommand{\PGL}{{\mathrm {PGL}}}
\newcommand{\SL}{{\mathrm {SL}}}
\newcommand{\PSL}{{\mathrm {PSL}}}
\newcommand{\GU}{{\mathrm {GU}}}
\newcommand{\PGU}{{\mathrm {PGU}}}
\newcommand{\SU}{{\mathrm {SU}}}
\newcommand{\PSU}{{\mathrm {PSU}}}
\newcommand{\GO}{{\mathrm {GO}}}
\newcommand{\SO}{{\mathrm {SO}}}
\newcommand{\Sp}{{\mathrm {Sp}}}
\newcommand{\PSp}{{\mathrm {PSp}}}
\newcommand{\Aut}{{\mathrm {Aut}}}
\newcommand{\Out}{{\mathrm {Out}}}
\newcommand{\Irr}{{\mathrm {Irr}}}
\newcommand{\IBR}{{\mathrm {IBr}}}
\newcommand{\Cl}{{\mathrm {Cl}}}
\newcommand{\QQ}{{\mathbb Q}}
\newcommand{\NN}{{\mathbb N}}
\newcommand{\GC}{\mathcal{G}}
\newcommand{\ta}{\hspace{0.5mm}^{2}\hspace*{-0.2mm}}
\newcommand{\tb}{\hspace{0.5mm}^{3}\hspace*{-0.2mm}}
\newcommand{\bC}{{\mathbf{C}}}
\newcommand{\bO}{{\mathbf{O}}}
\newcommand{\bN}{{\mathbf{N}}}
\newcommand{\bZ}{{\mathbf{Z}}}
\newcommand{\Al}{\textup{\textsf{A}}}
\newcommand{\Sy}{\textup{\textsf{S}}}
\begin{document}

\title[$p$-Regular classes and $p$-rational characters]
{$p$-Regular conjugacy classes\\ and $p$-rational
irreducible characters}

\author{Nguyen Ngoc Hung}
\address{Department of Mathematics, The University of Akron, Akron,
OH 44325, USA} \email{hungnguyen@uakron.edu}

\author{Attila Mar\'{o}ti}
\address {Alfr\'ed R\'enyi Institute of Mathematics, Re\'altanoda utca 13-15, H-1053, Budapest, Hungary}
\email{maroti.attila@renyi.hu}

\thanks{The work of the second author on the project leading to this application has
received funding from the European Research Council (ERC) under the
European Union's Horizon 2020 research and innovation programme
(grant agreement No. 741420). The second author was also supported
by the National Research, Development and Innovation Office (NKFIH)
Grant No.~K132951 and Grant No.~K115799.}

\subjclass[2010]{Primary 20E45, 20C15, 20D05, 20D06, 20D10}

\keywords{Finite groups, conjugacy classes, $p$-regular classes,
Brauer characters, p-rational characters}

%\date{\today}

\begin{abstract} Let $G$ be a finite group of order divisible by
a prime $p$. The number of $p$-regular and $p'$-regular conjugacy
classes of $G$ is at least $2\sqrt{p-1}$. Also, the number of
$p$-rational and $p'$-rational irreducible characters of $G$ is at
least $2\sqrt{p-1}$. Along the way we prove a uniform lower bound
for the number of $p$-regular classes in a finite simple group of
Lie type in terms of its rank and size of the underlying field.
\end{abstract}

\maketitle

\begin{center}
{\it To the memory of Jan Saxl.}
\end{center}

\bigskip

%%%%%%%%%%%%%%%%%%%%%%%%%%%%%%%%%%%%%%%%%%%%%%%%%%%%%%%%%%%%%%%%%%%%%%

\section{Introduction}

The number $k(G)$ of conjugacy classes of a finite group $G$, which
is equal to the number of complex irreducible characters of $G$, is
a fundamental invariant in group theory and representation theory.
For instance, Higman's famous conjecture is to show that if $p$ is a
prime and $G$ is a Sylow $p$-subgroup of the general linear group
$\GL_{n}(q)$, then $k(G)$ is a polynomial in $q$ with integer
coefficients. The celebrated $k(GV)$ theorem states that if $V$ is a
finite, faithful, coprime $G$-module for a finite group $G$ then the
number $k(GV)$ of conjugacy classes of the semidirect product $GV$
is at most $|V|$.

Lower bounds for the number of conjugacy classes of a finite group
have a long history. Landau \cite{Landau}, in response to a question
of Frobenius, proved in 1903 that for a given $k$ there are only
finitely many groups having $k$ conjugacy classes. This result may
be translated to a lower bound on the number of conjugacy classes of
a finite group $G$ only in terms of the order of $G$. Problem 3 of
Brauer's list of problems \cite{Brauer63} was to give a
substantially better lower bound for $k(G)$. This was solved by
Pyber \cite{Pyber92} and his bound was later slightly improved by
several authors, see
\cite{Bertram06,Keller11,Baumeister-Maroti-TongViet}. In general it
is not known whether there is a universal constant $c
> 0$ such that for every finite group $G$ we have $k(G) > c \cdot
\log|G|$.

Bounding $k(G)$ only in terms of a prime divisor $p$ of $|G|$ is
another fundamental problem. It is related to Problem 21 of Brauer
\cite{Brauer63} and a conjecture of H\'ethelyi and K\"ulshammer
\cite{Hethelyi-kulshammer} that for any $p$-block $B$ of any finite
group $G$ the number $k(B)$ of complex irreducible characters in $B$
is $1$ or is at least $2 \sqrt{p-1}$. As observed by Pyber, work of
Brauer \cite{Brauer42} implies that $k(G) \geq 2 \sqrt{p-1}$ for $G$
a finite group whose order is divisible by a prime $p$ but not by
$p^{2}$. Since then, this bound had been conjectured to be true for
all groups $G$ and all primes $p$ dividing $|G|$.

Proving $k(G)\geq 2\sqrt{p-1}$ for all $G$ and $p$ has turned out to
be a hard problem. Building on a series of relevant works by
H\'{e}thelyi-K\"{u}lshammer \cite{Hethelyi-kulshammer,HK}, Malle
\cite{Malle06}, Keller \cite{Keller09}, and
H\'{e}thelyi-Horv\'{a}th-Keller-Mar\'{o}ti \cite{Hethelyietal}, the
conjecture was finally confirmed in \cite{Maroti16}.

One of the purposes of this paper is to obtain a $p$-modular analog
of the bound $k(G)\geq 2\sqrt{p-1}$; that is, to obtain a bound for
the number of $p$-regular classes, which is also the number of
irreducible $p$-modular representations, of $G$. On the other hand,
we observe that since the class number $k(G)$ is a global
characteristic of $G$ while the bound $2\sqrt{p-1}$ depends only on
$p$, it is natural to expect that the same bound would hold for a
certain subset of conjugacy classes or irreducible characters that
are defined locally in terms of $p$. We confirm this expectation
from both perspectives: classes and characters, by considering
orders of group elements and fields of character values.

An element of a finite group $G$ is called $p$-regular if it has
order coprime to $p$. Throughout let $k_{p'}(G)$ denote the number
of conjugacy classes of $p$-regular elements in $G$ and let
$k_{p}(G)$ denote the number of conjugacy classes of
\emph{non-trivial} $p$-elements in $G$.

\begin{theorem}\label{theorem-general-bound-class}
If $G$ is a finite group and $p$ is a prime dividing the order of $G$, then
\[
k_{p}(G)+k_{p'}(G)\geq 2\sqrt{p-1}
\]
with equality if and only if $\sqrt{p - 1}$ is an
integer, $G = C_p \rtimes C_{\sqrt{p-1}}$ and $\bC_G(C_p) = C_p$.
\end{theorem}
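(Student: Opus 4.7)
The plan is to prove the bound via an arithmetic--geometric mean reduction combined with a case split on a Sylow $p$-subgroup $P$ of $G$. Since
\[
k_p(G) + k_{p'}(G) \geq 2\sqrt{k_p(G)\cdot k_{p'}(G)},
\]
it suffices to establish the product inequality $k_p(G)\cdot k_{p'}(G) \geq p-1$ and then separately chase down the equality case. I would split the argument according to whether $|P|=p$ or $|P|\geq p^2$.

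In the cyclic Sylow case $|P|=p$, Burnside's fusion lemma (applicable since $P$ is abelian) identifies the $G$-conjugacy classes of non-trivial $p$-elements with the $N_G(P)/C_G(P)$-orbits on $P\setminus\{1\}$, which are all of common size $e := [N_G(P):C_G(P)]$; hence $k_p(G) = (p-1)/e$. On the character side, Brauer's theory of blocks with cyclic defect groups yields $l(B_0) = e$ irreducible Brauer characters in the principal $p$-block $B_0$, so $k_{p'}(G) \geq e$. Multiplying gives $k_p(G)\cdot k_{p'}(G) \geq p-1$, and AM--GM produces the sum bound. Equality throughout forces $e = \sqrt{p-1} = (p-1)/e$, so $p-1$ is a perfect square, together with $k_{p'}(G) = l(B_0) = e$; since every $p$-block of $G$ contributes at least one Brauer character, the latter forces $B_0$ to be the unique $p$-block of $G$. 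Standard block-theoretic arguments then lead to $G = N_G(P) = C_p \rtimes C_e$ with faithful action and $\bC_G(C_p)=C_p$.

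For $|P| \geq p^2$ I would argue by induction on $|G|$. Using the standard facts $k_p(G) = k_p(G/O_{p'}(G))$ (via Schur--Zassenhaus) and $k_{p'}(G) \geq k_{p'}(G/O_{p'}(G))$, I may assume $O_{p'}(G) = 1$; further Clifford-theoretic manipulations allow passage to a proper normal subgroup or quotient whenever either still has order divisible by $p$, reducing to the case where $G$ is close to simple (that is, $F^*(G)$ is a non-abelian simple group of order divisible by $p$, or a small central extension thereof). In this residual case I would invoke the main technical theorem of the paper --- the uniform lower bound on the number of $p$-regular classes of a finite simple group of Lie type in terms of its rank and the size of the underlying field --- combined with explicit treatments of the alternating and sporadic families. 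The hypothesis $|P| \geq p^2$ lets me establish strict inequality in the product bound, so no further equality case arises. The main obstacle will be precisely this near-simple setting when $p$ is the defining characteristic: there the unipotent Sylow subgroup is large but $k_{p'}$ is only moderately large, which is why the paper needs a dedicated uniform theorem for simple groups of Lie type.
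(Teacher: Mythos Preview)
Your AM--GM reduction to the product inequality $k_p(G)\cdot k_{p'}(G)\ge p-1$ is false in general, so the whole strategy collapses outside the cyclic-Sylow case. Take $p=11$, $V=\FF_{11}^{\,2}$, and $H=\SL_2(5)$ embedded in $\SL_2(11)$; then $H$ is a $p'$-group acting faithfully, irreducibly and (since $|H|=120=|V|-1$ and $11\nmid|H|$) regularly on $V\setminus\{0\}$. For $G=V\rtimes H$ one gets $k_p(G)=n(H,V)-1=1$ and $k_{p'}(G)=k(H)=9$, so $k_p(G)\cdot k_{p'}(G)=9<10=p-1$, while the sum $1+9=10>2\sqrt{10}$ is fine. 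This example also shows the gap in your reduction for $|P|\ge p^2$: after killing $\bO_{p'}(G)$ you may still have $\bO_p(G)\ne 1$, and if the minimal normal subgroup $V$ is an elementary abelian $p$-group with $p\nmid|G/V|$ you do \emph{not} land in an almost-simple configuration at all. This $p$-solvable residual case $G=V\rtimes H$ is precisely where the hard work lies.

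The paper handles it with a direct \emph{sum} bound: the key Lemma~\ref{lemma-Maroti} (refining \cite{Maroti16}) shows that for a faithful irreducible coprime $FH$-module $V$ one has $k(H)+n(H,V)-1\ge 2\sqrt{p-1}$, with equality exactly in the Frobenius case $|V|=p$, $|H|=\sqrt{p-1}$. Since $k_{p'}(G)\ge k(H)$ and $k_p(G)\ge n(H,V)-1$, this gives the theorem in the abelian-$V$ case without any product estimate. The non-abelian minimal normal subgroup case is then dealt with via Theorem~\ref{proposition-simple-groups} on $\Aut(S)$-orbits on $\Cl_p(S)\cup\Cl_{p'}(S)$, not via the uniform bound of Theorem~\ref{theorem-p-regular-bound-Lietype} on $k_{p'}(S)$ that you cite. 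Your argument for $|P|=p$ via Brauer's cyclic-defect theory is correct and pleasant, and indeed recovers the classical observation that motivated the problem; but it cannot be bootstrapped to larger Sylow subgroups along the lines you sketch.
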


Let $\IBR_p(G)$ denote the set of irreducible $p$-modular
representations of $G$. As $|\IBR_p(G)|=k_{p'}(G)$,
Theorem~\ref{theorem-general-bound-class} provides a somewhat
unexpected lower bound for the number of irreducible $p$-Brauer
characters, namely $|\IBR_p(G)|\geq 2\sqrt{p-1}-k_{p}(G)$, and
therefore, it can be viewed as a modular version of the bound
$k(G)\geq 2\sqrt{p-1}$ for the number of ordinary irreducible
characters of $G$.

We make the bound $|\IBR_p(G)|\geq 2\sqrt{p-1}-k_{p}(G)$ more
explicit in the case when $G$ is non-$p$-solvable.

\begin{theorem}
\label{nonsolvable} Let $p$ be a prime. Let $G$ be a
non-$p$-solvable finite group. The number $|\IBR_p(G)|$ of
irreducible $p$-Brauer characters of $G$ is larger than
$2\sqrt{p-1}$ unless possibly if $p \leq 257$. In any case,
$|\IBR_p(G)| > \sqrt{p-1}$.
\end{theorem}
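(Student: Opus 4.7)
The plan is to reduce the statement to a lower bound on $|\IBR_p(S)|$ for non-abelian finite simple groups $S$ of order divisible by $p$, and then to invoke that bound family by family via the classification.

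First I would reduce to an essentially almost simple situation. Let $R$ denote the largest normal $p$-solvable subgroup of $G$. Since $G$ is non-$p$-solvable, $G/R$ is non-trivial and has no non-trivial normal $p$-solvable subgroup, so $\Soc(G/R)$ is a direct product of non-abelian simple groups. Any minimal normal subgroup $N/R$ of $G/R$ has the form $S_1\times\cdots\times S_m$ with all $S_i$ isomorphic to a single non-abelian simple $S$; since a direct product of copies of a simple group of order coprime to $p$ would be a non-trivial normal $p'$-subgroup of $G/R$, we must have $p\mid|S|$. The inflation inequality $|\IBR_p(G)|\ge |\IBR_p(G/R)|$ lets me replace $G$ by $G/R$, and henceforth assume $R=1$.

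Next I would apply Clifford theory for Brauer characters: the map $\IBR_p(G)\to\IBR_p(N)/G$ sending $\phi$ to the $G$-orbit of any irreducible constituent of $\phi|_N$ is surjective, so
\[
|\IBR_p(G)|\;\geq\;|\IBR_p(N)/G|.
\]
Since $\IBR_p(N)=\IBR_p(S)^m$ and $G$ acts on $N$ through $G/N\hookrightarrow\Out(S)\wr\Sy_m$, the number of orbits is at least $|\IBR_p(S)|^m/(|\Out(S)|^m\,m!)$. In the critical case $m=1$ this reduces to $|\IBR_p(G)|\geq |\IBR_p(S)|/|\Out(S)|$, with $G$ effectively almost simple.

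It therefore suffices to show that $|\IBR_p(S)|>2\sqrt{p-1}\cdot|\Out(S)|$ for every non-abelian simple $S$ with $p\mid|S|$ whenever $p>257$, and $|\IBR_p(S)|>\sqrt{p-1}\cdot|\Out(S)|$ in all remaining cases. This is carried out via the classification of finite simple groups. For alternating $\Al_n$, one uses that $|\IBR_p(\Al_n)|$ is essentially the number of $p$-regular partitions of $n$, together with partition asymptotics and direct checks for small $n$. Sporadic groups (and the Tits group) are read off from the Modular ATLAS. For groups of Lie type, $|\IBR_p(S)|=k_{p'}(S)$, and one invokes the paper's uniform rank/field-size lower bound for $k_{p'}(S)$ in both the defining- and cross-characteristic regimes. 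The main obstacle is this last step: asymptotically everything is comfortable, but for each family the inequality has to be verified for small parameters, and the loss factor $|\Out(S)|$---which can be as large as $2df$ for $\PSL_n(p^f)$ and similarly for the other classical groups---makes the estimate tight when $p$ itself is relatively small. It is exactly these small cases that force the exclusion $p\le 257$ from the sharp bound $2\sqrt{p-1}$, while the weaker bound $\sqrt{p-1}$ continues to hold even in the worst scenarios.
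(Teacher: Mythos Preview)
Your outline is correct and matches the paper's argument: reduce modulo the ($p$-)solvable radical, take a non-abelian simple factor $S$ of the socle with $p\mid|S|$, and bound $|\IBR_p(G)|=k_{p'}(G)$ below by the number of $G$-orbits on $\Cl_{p'}(S^m)$. The only refinement worth noting is that the paper invokes Theorem~\ref{proposition-simple-groups}(iii) directly---the orbit bound $n(\Aut(S),\Cl_{p'}(S))>2\sqrt{p-1}$ with its finite list of exceptions, all satisfying $p\leq257$---rather than your slightly stronger sufficient condition $k_{p'}(S)>2\sqrt{p-1}\,|\Out(S)|$ or the uniform bound $q^r/(17r^2)$ (which, as you correctly anticipate, is too weak for small parameters), and for $m\geq2$ it uses the sharper multiset count $\binom{k+m-1}{m}\geq k(k+1)/2$ in place of your crude estimate $|\IBR_p(S)|^m/(|\Out(S)|^m m!)$.
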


Next we turn to fields of character values. For a positive integer
$n$, let $\QQ_n$ denote the cyclotomic field extending the field of
rational numbers $\QQ$ by a primitive $n$th root of unity. For $\pi$
a set of primes, we say that $\chi\in\Irr(G)$ is $\pi$-rational if
there is a positive integer $n$ coprime to every prime in $\pi$ such
that $\chi(g)\in \QQ_n$ for all $g\in G$. We denote the set of all primes different from $p$ by $p'$ and we write $p$-rational instead of $\{p\}$-rational. Let $\Irr_{\mathrm{p-rat}}(G)$ and $\Irr_{\mathrm{p'-rat}}(G)$ denote the sets of $p$-rational and $p'$-rational characters of $G$. Note that a character $\chi$ is $p'$-rational if and only if its
values are in $\QQ_{|G|_p}$. Also, $\Irr_{\mathrm{p-rat}}(G)\cap
\Irr_{\mathrm{p'-rat}}(G)$ is equal to $\Irr_{\QQ}(G)$, the set of rational irreducible
characters of $G$.

\begin{theorem}\label{theorem-general-bound-character}
If $G$ is a finite group and $p$ is a prime dividing the order of $G$, then
\[
|\Irr_{\mathrm{p-rat}}(G)\cup\Irr_{\mathrm{p'-rat}}(G)|\geq 2\sqrt{p-1}
\]
with equality if and only if $\sqrt{p - 1}$ is an
integer, $G = C_p \rtimes C_{\sqrt{p-1}}$ and $\bC_G(C_p) = C_p$.
\end{theorem}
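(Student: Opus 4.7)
The plan is to reduce Theorem~\ref{theorem-general-bound-character} to Theorem~\ref{theorem-general-bound-class} via Brauer's permutation lemma. Set $\Gamma=\mathrm{Gal}(\QQ_{|G|}/\QQ)$ and let $H_p=\mathrm{Gal}(\QQ_{|G|}/\QQ_{|G|_{p'}})$ and $H_{p'}=\mathrm{Gal}(\QQ_{|G|}/\QQ_{|G|_p})$. Then an irreducible character $\chi$ of $G$ is $p$-rational (respectively, $p'$-rational) precisely when it is fixed by $H_p$ (respectively, $H_{p'}$). Since $\QQ_{|G|_p}\QQ_{|G|_{p'}}=\QQ_{|G|}$, Galois theory yields $\langle H_p, H_{p'}\rangle=\Gamma$, and consequently $\Irr_{\mathrm{p-rat}}(G)\cap\Irr_{\mathrm{p'-rat}}(G)=\Irr_{\QQ}(G)$. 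The tool I would rely on is a strengthened form of Brauer's permutation lemma: for every subgroup $H\leq\Gamma$, the number of $H$-fixed irreducible characters of $G$ equals the number of $H$-fixed conjugacy classes of $G$. This strengthening is available because $\Gamma$ is abelian; the classical Brauer lemma gives an isomorphism of $\Gamma$-modules $\CC\Irr(G)\cong\CC\cl(G)$, and over an abelian group the character of a permutation module determines the underlying $\Gamma$-set up to isomorphism.

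Let $X$ be the set of conjugacy classes of $G$ fixed by $H_p$ or by $H_{p'}$. Inclusion-exclusion combined with the above equality of fixed counts yields
$$|X|=|\Irr_{\mathrm{p-rat}}(G)|+|\Irr_{\mathrm{p'-rat}}(G)|-|\Irr_{\QQ}(G)|=|\Irr_{\mathrm{p-rat}}(G)\cup\Irr_{\mathrm{p'-rat}}(G)|.$$
Any $p$-regular element $g$ satisfies $g^r=g$ whenever $r\equiv 1\pmod{|G|_{p'}}$, so every $p$-regular class belongs to $X$; symmetrically, every non-trivial $p$-element class also belongs to $X$. These two families of classes are disjoint (the identity lies only among the $p$-regular ones, since $k_p(G)$ counts only non-trivial $p$-elements), and therefore
$$|\Irr_{\mathrm{p-rat}}(G)\cup\Irr_{\mathrm{p'-rat}}(G)|=|X|\geq k_{p'}(G)+k_p(G)\geq 2\sqrt{p-1}$$
by Theorem~\ref{theorem-general-bound-class}.

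For equality, both inequalities above must be tight. The second and Theorem~\ref{theorem-general-bound-class} force $G=C_p\rtimes C_{\sqrt{p-1}}$ with $\bC_G(C_p)=C_p$; in this Frobenius group every element lies either in the kernel $C_p$ (identity or non-trivial $p$-element) or in a conjugate of the complement (a $p$-regular element of order dividing $\sqrt{p-1}$), so $X$ cannot contain any classes beyond the $p$-regular and non-trivial $p$-classes, and the first inequality is automatically an equality. The most delicate step of the proof will be justifying the subgroup version of Brauer's permutation lemma; once that is in hand, the remainder is a transparent translation of the class-theoretic bound of Theorem~\ref{theorem-general-bound-class} into the language of character fields.
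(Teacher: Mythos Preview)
Your reduction hinges on the claim that for every subgroup $H\leq\Gamma=\mathrm{Gal}(\QQ_{|G|}/\QQ)$ one has $|\Irr(G)^H|=|\cl(G)^H|$, and you justify this by asserting that over an abelian group the permutation character determines the underlying permutation set. That assertion is false: for $\Gamma=C_2\times C_2=\langle a,b\rangle$, the $\Gamma$-sets $X_1=\Gamma/\langle a\rangle\sqcup\Gamma/\langle b\rangle\sqcup\Gamma/\langle ab\rangle$ and $X_2=\Gamma\sqcup\{\ast\}\sqcup\{\ast\}$ have identical permutation characters $(6,2,2,2)$, yet $|X_1^{\Gamma}|=0$ while $|X_2^{\Gamma}|=2$. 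Brauer's permutation lemma equates fixed-point counts only element by element, hence only for \emph{cyclic} subgroups. For odd $p$ the group $H_p\cong(\ZZ/|G|_p\ZZ)^\times$ is indeed cyclic, and the paper exploits exactly this in Lemma~\ref{lemma-p-rat-regular} to get $|\Irr_{\mathrm{p-rat}}(G)|\geq k_{p'}(G)$. But $H_{p'}\cong(\ZZ/|G|_{p'}\ZZ)^\times$ is typically far from cyclic, so neither $|\Irr(G)^{H_{p'}}|=|\cl(G)^{H_{p'}}|$ nor your inclusion--exclusion identity is established.

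That this gap is genuine rather than cosmetic is confirmed by the paper itself: the closing remark of Section~\ref{section-p-rational} records that the authors do \emph{not} know whether $|\Irr_{\mathrm{p-rat}}(G)\cup\Irr_{\mathrm{p'-rat}}(G)|\geq k_{p'}(G)+k_p(G)$ in general, and that even $|\Irr_{\mathrm{p'-rat}}(G)|\geq 1+k_p(G)$ is only conjectural outside the $p$-solvable case. Your argument, were it valid, would settle both. The paper's own proof therefore takes a completely different route: it treats $p$-solvable groups directly via Clifford theory and canonical extensions (Theorem~\ref{theorem-p-rat-solvable}), handles $p=2$ separately (Lemma~\ref{lemma-p=2}), and for odd $p$ combines Lemma~\ref{lemma-p-rat-regular} with the $\Aut(S)$-orbit bounds on $p$-regular classes from Theorem~\ref{proposition-simple-groups} to deal with almost simple groups (Theorem~\ref{theorem-p-rat-almostsimple}) and then with general non-$p$-solvable groups (Lemma~\ref{lemma-p-rat-numberfactor>2}).
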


Theorems \ref{theorem-general-bound-class} and
\ref{theorem-general-bound-character} show that, in groups of order
divisible by a prime $p$, there is a correlation between $k_{p}(G)$
and $k_{p'}(G)$ as well as $|\Irr_{\mathrm{p-rat}}(G)|$ and
$|\Irr_{\mathrm{p'-rat}}(G)|$: if one is small, the other must be large
(compared to $p$, of course). In the minimal situations where one
number is minimal/small, the bound indeed could be improved. We plan
to address this at another time.

On the way to the proofs of
Theorems~\ref{theorem-general-bound-class}, \ref{nonsolvable} and
\ref{theorem-general-bound-character}, we have to bound the number
of $p$-regular classes in finite simple groups. The following
uniform bound for simple groups of Lie type is of independent
interest and might be useful in other applications.

\begin{theorem}\label{theorem-p-regular-bound-Lietype}
If $S$ is a simple group of Lie type defined over the field of $q$
elements with $r$ the rank of the ambient algebraic group and $p$ is any prime, then
\[
k_{p'}(S)> \frac{q^r}{17r^2}.
\]
\end{theorem}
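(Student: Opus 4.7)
The plan is to pass from $S$ to a simply connected Lie-type cover and then bound the number of semisimple $p$-regular conjugacy classes there via the parameterization by $F$-stable maximal tori.

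\emph{Reduction.} Let $\mathbf{G}$ be the simple simply connected algebraic group of the relevant type and rank $r$ over $\overline{\FF}_q$ with Frobenius $F$, and write $\tilde G := \mathbf{G}^F$. Then $S$ is a quotient of $\tilde G$ by a central subgroup of order $|Z|\leq r+1$ (with the usual care for isogeny type and the finitely many exceptional covers). Each $p$-regular class of $\tilde G$ descends to a $p$-regular class of $S$, and each $S$-class has at most $|Z|$ preimage classes in $\tilde G$, so $k_{p'}(S)\geq k_{p'}(\tilde G)/|Z|$. It therefore suffices to prove $k_{p'}(\tilde G) \geq c\,q^r/r$ for a uniform $c>0$.

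\emph{Defining characteristic.} If $p$ equals the defining characteristic $p_0$, every semisimple element has $p'$-order, and Steinberg's theorem gives exactly $q^r$ semisimple classes in $\tilde G$. Thus $k_{p'}(\tilde G) \geq q^r$, and the claim follows from the reduction.

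\emph{Non-defining characteristic.} Suppose $p \neq p_0$. For any $F$-stable maximal torus $T$ of $\mathbf{G}$, set $W_T := N_{\tilde G}(T)^F/T^F$; the orbits of $W_T$ on $T^F_{p'}$ inject into the semisimple $p$-regular classes of $\tilde G$, yielding
\[
k_{p'}(\tilde G) \;\geq\; \frac{|T^F|}{|W_T|\cdot|T^F|_p}.
\]
The plan is to pick $T$ (depending on $p$, the Lie type, and $e := \ord_p(q)$) so that $|T^F|$ stays close to $q^r$, $|W_T|$ is linear in $r$, and $|T^F|_p$ is bounded. For classical types and generic $p$, a Coxeter-type torus works: $|T^F|$ is $\sim q^r$, $|W_T|$ is the rank plus one, and $|T^F|_p$ is small since only those cyclotomic factors $\Phi_{ep^a}(q)$ divide the order, and by lifting-the-exponent $v_p(\Phi_{ep^a}(q))\leq 1$ for $a\geq 1$. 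For atypical primes (e.g.\ $p\mid q-1$) one switches to a different torus from the finite catalog indexed by $F$-conjugacy classes in the Weyl group. Exceptional types form a finite list and can be checked case-by-case.

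\emph{Main obstacle.} The delicate part is the non-defining case: no single family of tori works for all primes $p$ simultaneously, so one must make a case split on $e = \ord_p(q)$ and, in every Lie type, exhibit a torus with the three properties above. The worst cases tend to be classical groups of large rank with $p\in\{2,3\}$ and $e$ small, where many cyclotomic factors $\Phi_{ep^a}(q)$ appear in torus orders and $|T^F|_p$ can grow; a careful control of $|T^F|_p$ (possibly by leveraging several tori, or by a refined combinatorial counting of $F$-classes in the Weyl group) is what will be needed to secure the explicit constant $17r^2$ in the statement.
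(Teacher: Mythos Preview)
Your strategy is genuinely different from the paper's, and the obstacle you flag at the end is exactly the step that makes your route hard to complete with the stated constant.

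The paper does not try to find, for each $p$, a torus with small $p$-part. Instead it uses two off-the-shelf ingredients and multiplies them: (a) lower bounds on the \emph{proportion} of $p$-regular elements in $S$ (at least $1/(r+1)$ for linear and unitary groups by Babai--Guest--Praeger--Wilson, at least $1/(4r)$ for orthogonal groups by Babai--P\'alfy--Saxl), and (b) lower bounds on the \emph{minimum centralizer size} in $S$ coming from Fulman--Guralnick. Since every conjugacy class has size at most $|S|/\min_g |\bC_S(g)|$, one gets
\[
k_{p'}(S)\;\geq\;\frac{|S_{p'}|}{\max\text{(class size)}}\;\geq\;\bigl(\text{proportion of }p\text{-regular elements}\bigr)\cdot\min_g|\bC_S(g)|,
\]
and the right-hand side is already of the shape $q^r/(cr^2)$ uniformly in $p$. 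For symplectic and odd-dimensional orthogonal groups the paper does use a single torus of order $q^n\pm 1$ with $p\nmid|T|$ (possible since odd $p$ cannot divide both), which is close in spirit to your idea; the exceptional types are handled by the explicit estimates obtained earlier in the paper. The defining-characteristic and $p\nmid|S|$ cases are disposed of exactly as you do.

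What this buys the paper is that no case split on $e=\ord_p(q)$ is ever needed for the linear, unitary, or even-dimensional orthogonal families, and the constant $17$ drops out of a routine check. Your approach, by contrast, must confront the problem you name: for small $e$ (and especially for $q=2$, where many natural tori have order far below $q^r$) it is not clear one can always exhibit a torus with $|T^F|\sim q^r$, $|W_T|=O(r)$, and $|T^F|_p$ bounded. This is not fatal---one can likely push it through with enough case analysis---but it is substantially more work than the proportion-times-centralizer argument, and securing the explicit constant $17r^2$ along your route would be delicate.
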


Better and more refined bounds for different types and different $p$
are given in Sections~\ref{section-linear-unitary},
\ref{section-symplectic-orthogonal} and \ref{section-exceptional}.
We remark that the problem of bounding the class number (both upper
and lower bounds) of finite groups of Lie type has been well
studied, for instance in the influential work of Fulman and
Guralnick \cite{Fulman-Guralnick12}. To provide a relative
comparison between $k_{p'}(S)$ and $k(S)$, we note that the best
general lower bound for $k(S)$ is $q^r/d$, where $d$ is the order of
the group of diagonal automorphisms of $S$.

Theorems \ref{theorem-general-bound-class}, \ref{nonsolvable}, and
\ref{theorem-general-bound-character} are proved in
Sections~\ref{section-p-regular-class},
\ref{section-Brauer-characters}, and \ref{section-p-rational},
respectively. In Sections~\ref{section-some-generalities},
\ref{section-linear-unitary}, \ref{section-symplectic-orthogonal}
and \ref{section-exceptional} we prove various bounds for the number
of $p$-regular and $p'$-regular classes in finite nonabelian simple
groups $S$, as well as the number of $\Aut(S)$-orbits on those
classes. Finally, Theorem~\ref{theorem-p-regular-bound-Lietype} is
proved in Section~\ref{section-Lie-type-bound}.

%%%%%%%%%%%%%%%%%%%%%%%%%%%%%%%%%%%%%%%%%%%%%%%%%%%%%%%%%%%%%%%%%%%

\section{Orbits of p-regular and p'-regular classes of simple
groups} \label{section-some-generalities}

Let $p$ be a prime and let $S$ be a nonabelian finite simple group.
Recall that an element is $p$-regular in $S$ if it has order coprime
to $p$. We denote the set of $p$-regular elements in $S$ by
$S_{p'}$, the set of $p$-regular conjugacy classes in $S$ by
$\mathrm{Cl}_{p'}(S)$, and the number of $p$-regular conjugacy
classes in $S$ by $k_{p'}(S)$. We denote the set of
\emph{non-trivial} $p$-elements in $S$ by $S_p$, the set of all
conjugacy classes in $S$ contained in $S_p$ by $\mathrm{Cl}_{p}(S)$,
and the number of conjugacy classes in $S$ contained in
$\mathrm{Cl}_{p}(S)$ by $k_{p}(S)$. When a group $G$ acts on a set
$X$, we use $n(G,X)$ to denote the number of $G$-orbits on $X$.

To prove our main results we need to bound the number of
$\Aut(S)$-orbits on $p$-regular and $p'$-regular classes of $S$ for
all nonabelian simple groups $S$, as presented in the following theorem. We
will prove it in this and the next three sections.

\begin{theorem}\label{proposition-simple-groups}
Let $S$ be a nonabelian finite simple group and let $p$ be a prime
divisor of $|S|$. We have

\begin{itemize}

\item[(i)] The number of $\Aut(S)$-orbits on the set $\mathrm{Cl}_{p'}(S)
\cup \mathrm{Cl}_{p}(S)$ is larger than $2\sqrt{p-1}$ except if
$(S,p)$ is equal to $(\Al_5,5)$ or to $(\PSL_{2}(16)),17)$. %If
%$(S,p) = (\Al_{5},5)$ or $(S,p) = (\PSL_{2}(16),17)$, then the
%number of orbits is $4$ and $7$ respectively.

\item[(ii)] The number of $\Aut(S)$-orbits on $p$-regular classes of $S$
is at least $2(p-1)^{1/4}$. The equality occurs if and only if
$(S,p)=(\PSL_{2}(16)),17)$.

\item[(iii)] The number of $\Aut(S)$-orbits on $p$-regular classes of $S$
is greater than $2\sqrt{p-1}$ unless
possibly when $(S,p)$ is listed in Table~\ref{table-exceptions}.
\end{itemize}
\end{theorem}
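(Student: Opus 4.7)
The plan is to go through the classification of finite simple groups and handle alternating, sporadic, and Lie type groups in turn. For each simple group $S$ and each prime $p$ dividing $|S|$, the task in parts (i)--(iii) is to show that a certain set of classes is large enough even after quotienting by the action of $\Aut(S)$. Since the number of $\Aut(S)$-orbits on a set of classes $X$ is at least $|X|/|\Out(S)|$, the strategy is to produce lower bounds for $k_{p'}(S)$ (and, for part (i), also $k_p(S)$) that exceed $2\sqrt{p-1}\cdot|\Out(S)|$ (respectively $2(p-1)^{1/4}\cdot|\Out(S)|$ for part (ii)), and then to refine these bounds in the exceptional small cases where they are not strong enough to conclude immediately. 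Theorem~\ref{theorem-p-regular-bound-Lietype}, together with the sharper type-by-type estimates to be established in Sections~\ref{section-linear-unitary}, \ref{section-symplectic-orthogonal} and \ref{section-exceptional}, will be the workhorse for the Lie type cases.

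For the alternating groups $S=\Al_n$, I would count $p$-regular classes using the bijection between $p$-regular classes of $\Sy_n$ and partitions of $n$ with no part divisible by $p$, taking into account the (at most $2$-fold) splitting upon restriction to $\Al_n$, and the fact that $|\Out(\Al_n)|\leq 4$. Growth of the relevant partition function easily beats $2\sqrt{p-1}$ once $n$ is moderately large compared to $p$; the remaining small $n$ are a finite check. For sporadic groups and the Tits group, the proof is a direct computation from the \textsc{Atlas}: for each sporadic $S$ and each $p\mid|S|$, read off $k_{p'}(S)$, $k_p(S)$, and the number of $\Aut(S)$-orbits from the character tables and compare to the required bounds.

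The main effort is on groups of Lie type. For $S$ defined over $\FF_q$ with ambient algebraic group of rank $r$, I would separate the defining-characteristic case $p=\Char\FF_q$ from the cross-characteristic case. In the defining-characteristic case, $p$-regular classes of $S$ correspond to semisimple classes and are counted precisely by the Deriziotis--L\"ubeck-type formulas which yield bounds of order $q^r$ up to diagonal factors, and Theorem~\ref{theorem-p-regular-bound-Lietype} already gives $k_{p'}(S)>q^r/(17r^2)$; dividing by $|\Out(S)|\leq 2\log_p(q)\cdot|\Out_{\mathrm{diag}}(S)|\cdot|\Out_{\mathrm{graph}}(S)|$ and using $p\leq q$ reduces the inequality to checking an elementary inequality in $q$ and $r$, leaving only small $(q,r)$ as exceptions. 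In the cross-characteristic case, $k_{p'}(S)$ is close to $k(S)$ (the deficit being controlled by the order of a Sylow $p$-subgroup), and the refined bounds from Sections~\ref{section-linear-unitary}--\ref{section-exceptional} will show that $k_{p'}(S)$ is still of order $q^r$ divided by a modest factor, so the same style of argument applies. Part (i) is then automatic from part (iii) in all but the listed small exceptions, where the extra contribution from $k_p(S)$ is verified by hand.

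The hardest step will be the careful analysis of the small-rank Lie type groups in low characteristic --- in particular $\PSL_2(q)$, $\PSL_3(q)$, $\PSU_3(q)$, $\Sp_4(q)$, and the Suzuki and Ree families --- when $p$ is a large prime divisor of $|S|$ outside the defining characteristic. These are the cases that populate Table~\ref{table-exceptions} in part (iii), and also produce the equality case $(\PSL_2(16),17)$ in part (ii). For these, I would work with explicit descriptions of the conjugacy class structure and the action of $\Aut(S)$, and verify the required inequalities and the characterization of equality individually. Because the target bound $2(p-1)^{1/4}$ in part (ii) is strictly weaker than $2\sqrt{p-1}$, once the exceptional list for part (iii) is fully understood, deducing part (ii) amounts to re-examining those exceptions and confirming that $(\PSL_2(16),17)$ is the unique configuration in which equality is attained.
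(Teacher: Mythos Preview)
Your outline is broadly the same as the paper's --- go through the classification, bound $n(\Aut(S),\Cl_{p'}(S))$ from below by $k_{p'}(S)/|\Out(S)|$, and handle the residual small cases directly --- but there are two substantive issues with the plan as written.

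First, invoking Theorem~\ref{theorem-p-regular-bound-Lietype} here is circular: in the paper that theorem is \emph{deduced from} the type-by-type estimates developed in Sections~\ref{section-linear-unitary}--\ref{section-exceptional}, which are precisely the content of the proof of Theorem~\ref{proposition-simple-groups}. So the uniform bound $k_{p'}(S)>q^r/(17r^2)$ is not available to you as an input; you must earn it case by case. In any event, that bound alone is too weak for the low-rank groups where $p$ can be as large as $q^r$ (e.g.\ $\PSL_2(q)$ with $p\mid q+1$), so the ``sharper type-by-type estimates'' you allude to are the entire proof, not a refinement of it.

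Second, the mechanism you describe for the cross-characteristic case --- ``$k_{p'}(S)$ is close to $k(S)$, the deficit being controlled by the order of a Sylow $p$-subgroup'' --- is not what makes the argument work, and is not obviously true in a useful form. The paper instead combines two ingredients you do not mention: (a) lower bounds on the \emph{proportion} of $p$-regular elements in $S$, coming from strongly self-centralizing maximal tori (Lemma~\ref{lemma-strongly-self-centralizing}, from Babai--P\'alfy--Saxl) and from cycle-type arguments for the Weyl group (Babai--Guest--Praeger--Wilson); and (b) lower bounds on the minimal centralizer size in $S$ (Fulman--Guralnick). Multiplying these gives $k_{p'}(S)\geq |S_{p'}|\cdot\min_g|\bC_S(g)|/|S|$. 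For the classical families the paper also relies heavily on a rank-reduction lemma (Lemma~\ref{piPSL}): $n(\Aut(\PSL_n(q)),\Cl_\pi(\PSL_n(q)))\geq n(\Aut(\PSL_{n-1}(q)),\Cl_\pi(\PSL_{n-1}(q)))$, and similarly for unitary and symplectic groups, which lets one assume $p$ is a primitive prime divisor and thus control its size. Without these two tools your plan has no concrete way to bound $k_{p'}(S)$ when $p$ is a large cross-characteristic prime, and the exceptional-group analysis in particular would be difficult to carry out.
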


\begin{table}[ht]
\caption{Possible exceptions for the bound
$n(\Aut(S),\Cl_{p'}(S))>2\sqrt{p-1}$.\label{table-exceptions}}
\begin{center}
\begin{tabular}{llc}
\hline
$S$  & $p$ & $n(\Aut(S),\Cl_{p'}(S))$ \\
\hline

$\Al_5$& $5$ & $3$\\

$\PSL_2(7)$& $7$ & $4$\\

$\Al_6$& $5$ & $4$\\

$\PSL_2(8)$& $7$ & $4$\\

%$\PSL_2(9)$& $5$ & $4$ \\

$\PSL_2(11)$& $11$ & $6$\\

$\PSL_2(16)$& $17$ & $5$\\

$\PSL_2(27)$& $13$ & $5$\\

$\PSL_2(32)$& $11$ & $6$\\

$\PSL_2(32)$& $31$ & $6$\\

$\PSL_2(81)$& $41$ & $10$ \\

$\PSL_2(128)$& $43$ & $12$ \\

$\PSL_2(128)$& $127$ & $12$ \\

$\PSL_2(243)$& $61$ & $15$ \\

$\PSL_2(256)$& $257$ & $21$ \\

%$\PSU_3(8)$& $p=19$ & \\

$\PSL_3(8)$& $73$ & $13$\\

$\PSU_3(16)$& $241$ & $\geq 27$\\

%$\PSU_3(29)$& $271$ & $\geq 31$\\

$\ta B_2(8)$& $13$ & 6\\

$\ta B_2(32)$& $31$ & 8\\

$\ta B_2(32)$& $41$ & 9\\

$\ta B_2(128)$& $113$ & $\geq19$\\

$\ta B_2(128)$& $127$ & $\geq14$\\

%$F_4(4)$& $257$ & \\

$\Omega_8^-(4)$& $257$ & $\geq32$\\

\hline
\end{tabular}
\end{center}
\end{table}

\subsection{Some generalities}
Observe that any nonabelian finite simple group has order divisible
by at least three distinct primes by Burnside's Theorem. It
immediately follows that $$n(\Aut(S), \mathrm{Cl}_{p'}(S)) \geq 3$$
and so Theorem~\ref{proposition-simple-groups} is true for $p=2$ and
$p=3$. Thus we may assume in this and the following sections that $p
\geq 5$.

\begin{lemma}\label{lemma-defining-characteristic}
Theorem~\ref{proposition-simple-groups} is true for $S$ a sporadic simple
group, the Tits group, and groups of Lie type of rank $r\geq 3$ in
characteristic $p\geq 5$.
\end{lemma}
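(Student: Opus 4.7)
The plan splits into two independent components, matching the two kinds of groups covered by the lemma. In both components it suffices to prove the strongest inequality, namely $n(\Aut(S),\Cl_{p'}(S))>2\sqrt{p-1}$ of part (iii), since this immediately implies parts (i) and (ii), and none of the excluded pairs in Table~\ref{table-exceptions} falls within the scope of the lemma.

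\smallskip
\noindent\emph{Sporadic simple groups and the Tits group.} Here the proof is a finite verification. There are $27$ such $S$, and for each the primes $p\geq 5$ dividing $|S|$ form a small finite set (with $p\leq 71$, attained by the Monster). The conjugacy class structure of $S$ together with the $\Aut(S)$-action is recorded in the ATLAS of Finite Groups and in GAP's character table library; from these one reads off $k_{p'}(S)$ and the number of $\Aut(S)$-orbits on $\Cl_{p'}(S)$. Since every such $S$ has a large number of classes relative to the primes dividing $|S|$, and $|\Out(S)|\leq 2$ in all cases, the inequality $n(\Aut(S),\Cl_{p'}(S))>2\sqrt{p-1}$ holds with substantial room in every instance.

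\smallskip
\noindent\emph{Lie type of rank $r\geq 3$ in characteristic $p\geq 5$.} In this defining-characteristic regime the $p$-regular elements of $S$ are precisely the semisimple ones, so $k_{p'}(S)$ equals the number of semisimple classes of $S$. The key input is Steinberg's theorem applied to the simply connected cover $G_{\mathrm{sc}}^F$ of $S$, which yields a lower bound of the form $q^r/d$ for the number of semisimple classes of $S$, where $d\leq r+1$ bounds the order of the diagonal part of $\Out(S)$. Combined with the factorisation $|\Out(S)|\leq d\cdot f\cdot g$, in which $f=\log_p q$ is the field part and $g\leq 6$ the graph part, we obtain
\[
n(\Aut(S),\Cl_{p'}(S)) \;\geq\; \frac{q^r}{d^{\,2}\,f\,g}.
\]
Since $p\leq q$, it suffices to verify $q^r/(d^{2}fg)>2\sqrt{q-1}$. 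For $r\geq 3$ and $q\geq 5$ this is a straightforward numerical check: the left-hand side grows as $q^r$, while the denominator grows only polynomially in $r$ and logarithmically in $q$, and the target grows only as $\sqrt{q}$.

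\smallskip
The main obstacle is a small residual list of borderline cases with $r=3$ and very small $q$ (typified by $\PSL_4(5)$, where $d=4$ combined with the graph automorphism nearly consumes the entire $q^r/d$ bound). For these few groups the crude estimate above is insufficient, and we will replace Steinberg's count by a direct, type-by-type enumeration of semisimple classes (via the formulas of Deriziotis--Liebeck or Fleischmann--Janiszczak for each Lie type), together with an explicit bookkeeping of the joint action of diagonal, field, and graph automorphisms on semisimple classes. The resulting sharper lower bound closes the gap in each remaining case without requiring any new technique, only careful computation in a handful of groups.
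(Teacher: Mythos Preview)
Your approach is essentially the paper's: a finite ATLAS/GAP check for the sporadics and the Tits group, and Steinberg's count $q^r$ of semisimple classes in the simply connected cover divided by $d\cdot|\Out(S)|$ in the defining-characteristic case. The paper simply asserts that $q^r/(d\,|\Out(S)|)>2\sqrt{p-1}$ holds for all relevant $p,q,r$, without isolating any borderline cases; your explicit caution about $\PSL_4(5)$ is well placed, since there $q^r/(d\,|\Out(S)|)=125/32<4=2\sqrt{p-1}$, so the crude inequality does fail and one genuinely needs the slightly sharper bookkeeping you propose (e.g.\ separating off the trivial class, or a direct count) to close that single case.
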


\begin{proof}
The statement follows for $S$ a sporadic simple group or $S$ the Tits group using \cite{Atl1,GAP}.
Assume that $S$ is of Lie type of rank $r\geq 3$ in
characteristic $p$. Let $S$ be of the form $G/\bZ(G)$, where
$G=\mathcal{G}^F$ is the set of fixed points of a simple algebraic
group $\mathcal{G}$ of simply connected type defined in characteristic $p$, under
a Frobenius endomorphism $F$. By \cite[Theorem 3.7.6]{Carter}, the
number of semisimple classes of $G$ is $q^r$, where $q$ is the size
of the underlying field of $\mathcal{G}$ and $r$ is the rank of
$\mathcal{G}$. Therefore,
\[
k_{p'}(S)\geq \frac{k_{p'}(G)}{k_{p'}(\bZ(G))}\geq
\frac{q^r}{|\bZ(G)|}=\frac{q^r}{d},
\]
where $d$ is the order of the group of diagonal automorphisms of
$S$. It follows that
\[
n(\Aut(S),\Cl_{p'}(S))\geq \frac{q^r}{d\cdot |\Out(S)|}.
\]
To prove the lemma, it is sufficient to show that $q^r/(d|\Out(S)|)>
2\sqrt{p-1}$. This turns out to be true for all $S$ and relevant
values of $p, q$, and $r$.
\end{proof}

The next result is essential in our proofs as it helps to reduce
from a classical group to one of smaller rank. From now on $q$ is
always a prime power $\ell^{f}$, where $\ell$ is a prime and $f$ is
a positive integer.

Let $\pi$ be either of the symbols $p$ or $p'$. We denote the
number of $\Aut(S)$-orbits on the set $\mathrm{Cl}_{\pi}(S)$ by
$n(\Aut(S), \mathrm{Cl}_{\pi}(S))$.

\begin{lemma}
\label{piPSL} If $S$ and $T$ are (non-abelian) finite simple groups
such that $$(S,T) \in \{ (\Al_{n}, \Al_{n-1}), (\PSL_{n}(q),
\PSL_{n-1}(q)), (\PSU_{n}(q), \PSU_{n-1}(q)) \}$$ or $(S,T) =
(\PSp_{2n}(q), \PSp_{2n-2}(q))$ with $q$ odd, then
$$n(\Aut(S),\mathrm{Cl}_{\pi}(S)) \geq n(\Aut(T),
\mathrm{Cl}_{\pi}(T)).$$
\end{lemma}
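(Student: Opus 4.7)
The plan is to produce, for each of the four listed pairs $(S,T)$, an injection from the set of $\Aut(T)$-orbits on $\Cl_{\pi}(T)$ into the set of $\Aut(S)$-orbits on $\Cl_{\pi}(S)$; taking cardinalities then yields the stated inequality. In every case $T$ sits naturally inside $S$ (or, more precisely, the simply connected cover of $T$ embeds in that of $S$), and the embedding preserves orders of elements, hence sends $\pi$-classes to $\pi$-classes.

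For $(\Al_{n-1},\Al_n)$ I realize $\Al_{n-1}$ as the stabilizer of the point $n$. Conjugacy classes of $\Al_m$ are parametrized by cycle types lying in $\Al_m$ (with the well-understood splitting for self-transpose types), and away from $m=6$ the $\Aut(\Al_m)=\Sy_m$ orbits on these classes are indexed by the cycle types themselves. Appending a $1$-cycle, i.e.\ $\lambda\mapsto \lambda\cup\{1\}$ from partitions of $n-1$ to partitions of $n$, is an injection that preserves both the property of having no part divisible by $p$ and the property of being a nontrivial $p$-power type; the case $n=6$ is handled by a direct check. For the three classical pairs I lift to the simply connected cover and use the block-diagonal embeddings $\SL_{n-1}(q)\hookrightarrow \SL_n(q)$, $\SU_{n-1}(q)\hookrightarrow \SU_n(q)$ given by $g\mapsto\diag(g,1)$, and $\Sp_{2n-2}(q)\hookrightarrow\Sp_{2n}(q)$ given by $g\mapsto g\perp I_2$ on a complementary non-degenerate $2$-dimensional symplectic subspace. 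Conjugacy classes in $\GL_m(q)$ and $\GU_m(q)$ are parametrized by functions from the relevant monic irreducible polynomials to partitions, and those of $\Sp_{2m}(q)$ by Wall's parametrization; in each case the embedding simply adjoins a block of size $1$ at the polynomial $X-1$ (or an $I_2$-block in the eigenvalue-$1$ component in the symplectic case), an operation that is evidently injective and preserves $\pi$-regularity.

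The main obstacle is to verify that this injection descends compatibly to the simple groups $S,T$ and to their $\Aut$-orbits on classes. This entails three compatibilities: with the quotient by the centre (to pass from the covers down to $S$ and $T$), with the diagonal and field automorphisms of $S$, and with the graph automorphism in the $\PSL_n$ case. The key point is that adjoining a distinguished block at $X-1$ (respectively an identity block on a complementary non-degenerate symplectic $2$-plane) commutes with the transpose-inverse involution, with the Galois action on polynomial coefficients, and with passage to the centre of the covering group, because the adjoined block is itself fixed by each of these operations. Consequently, if two elements of $T$ become $\Aut(S)$-conjugate after embedding in $S$, a reconstruction argument (removing the distinguished block from the invariant-factor data) shows they were already $\Aut(T)$-conjugate in $T$, yielding the required injectivity on $\Aut$-orbits.
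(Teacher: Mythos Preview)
Your proposal is correct and takes essentially the same approach as the paper: both use the natural block-diagonal embedding of (the cover of) $T$ into (the cover of) $S$, verify that field, graph, and diagonal automorphisms of $S$ restrict compatibly to $T$, and invoke rational canonical forms (for $\GL$) and Wall's theorem (for $\GU$ and $\Sp$) to show that $\GL_n$- (resp.\ $\GU_n$-, $\Sp_{2n}$-) conjugacy of embedded elements coincides with $\GL_{n-1}$- (resp.\ $\GU_{n-1}$-, $\Sp_{2n-2}$-) conjugacy. The only cosmetic difference is that the paper counts $\Aut(\SL_n(q))$-orbits on classes of elements fixing a $1$-dimensional subspace pointwise rather than phrasing the argument as an explicit injection of orbit sets, and handles the alternating case $n\in\{6,7\}$ by the same direct check you indicate.
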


\begin{proof}
Let $(S,T) = (\Al_{n}, \Al_{n-1})$. Observe that $n \geq 6$ by
assumption. The group $T$ may be considered as a point-stabilizer in
$S$. Note that $\Aut(\Al_{m}) = \Sy_{m}$ for every integer $m$ at
least $5$ and different from $6$. Assume that $n \geq 8$. In this
case $n(\Aut(S), \mathrm{Cl}_{\pi}(S))$ and $n(\Aut(T),
\mathrm{Cl}_{\pi}(T))$ are equal to the number of elements in
$S_{\pi}$ and $T_{\pi}$ of different cycle shapes. The desired bound
follows since $T_{\pi}$ is contained in $S_{\pi}$. Assume that $n
\in \{ 6, 7 \}$. The group $\Aut(\Al_{6})$ contains $\Sy_{6}$ as a
subgroup of index $2$ and $\Aut(\Al_{6})$ fuses the two conjugacy
classes of $\Al_{6}$ both consisting of elements of order $3$. Since
$p \geq 5$, we see that $n(\Aut(\Al_{6}), \mathrm{Cl}_{p}(\Al_{6}))$
is equal to the number of possible cycle shapes of non-trivial
$p$-elements in $\Al_{6}$ and $n(\Aut(\Al_{6}),
\mathrm{Cl}_{p'}(\Al_{6}))$ is equal to the number of possible cycle
shapes of $p$-regular elements in $\Al_{6}$ minus $1$. The desired
bound follows for $\pi = p$ and also for $n=7$. Let $n = 6$ and $\pi
= p'$. Since $p \geq 5$ and $n = 6$, we must have $p=5$. Finally,
$n(\Aut(\Al_{6}),\mathrm{Cl}_{5'}(\Al_{6})) = 4 > 3 =
n(\Aut(\Al_{5}),\mathrm{Cl}_{5'}(\Al_{5}))$.

Observe that $n \geq 3$ since $T$ is assumed to be non-abelian and
simple.

Consider the case when both $S$ and $T$ are projective special
linear groups.

Let $V$ be the natural $\GL_{n}(q)$-module of dimension $n$ defined
over the field of size $q$. The group $\SL_{n}(q)$ acts naturally on
$V$. Let $W$ be a $1$-dimensional subspace in $V$ and let $U$ be a
complementary $(n-1)$-dimensional subspace in $V$. There is a group
$\SL_{n-1}(q)$ which acts naturally on $U$ and which fixes $W$. All
automorphisms of $S$ can be described by automorphisms of
$\SL_{n}(q)$ and the group $\Out(S)$ is isomorphic to $D_{2(n,q-1)}
\times C_{f}$, see \cite[Section 3.3.4]{Wilson}. The field
automorphisms and the inverse transpose automorphism of $\SL_{n}(q)$
restrict naturally to the subgroup $\SL_{n-1}(q)$ just defined.
Moreover, elements $a$ and $b$ of $\SL_{n-1}(q)$ lie in the same
$\GL_{n}(q)$-orbit if and only if $a$ and $b$ are conjugate in
$\GL_{n-1}(q)$ by a theorem on rational canonical forms.

It follows that the number $N$ of orbits of $\Aut(\SL_{n}(q))$ on
the subset consisting of those elements of
$\mathrm{Cl}_{\pi}(\SL_{n}(q))$ whose members fix some
$1$-dimensional subspace of $V$ is at least $n(\Aut(\SL_{n-1}(q)),
\mathrm{Cl}_{\pi}(\SL_{n-1}(q)))$, which in turn is at least
$n(\Aut(\PSL_{n-1}(q)), \mathrm{Cl}_{\pi}(\PSL_{n-1}(q)))$.

On the other hand, $N$ is at most $n(\Aut(\PSL_{n}(q)),
\mathrm{Cl}_{\pi}(\PSL_{n}(q)))$.

Let $S$ and $T$ be projective special unitary groups.

Let $V$ be the natural $\GU_{n}(q)$-module of dimension $n$ defined
over the field of size $q^{2}$. The module $V$ is equipped with a
non-singular conjugate-symmetric sesquilinear form $f$. The group
$\SU_{n}(q)$ acts naturally on $V$. Let $W$ be a $1$-dimensional
non-singular subspace in $V$ with respect to $f$. Let $U$ be the
$(n-1)$-dimensional non-singular subspace of $V$ perpendicular to
$W$ with respect to the form $f$. There is a subgroup $\SU_{n-1}(q)$
which acts naturally on $U$ and which fixes $W$. The automorphisms
of the simple group $\PSU_{n}(q)$ are described in \cite[Section
3.6.3]{Wilson}. All outer automorphisms of $\PSU_{n}(q)$ come from
outer automorphisms of $\SU_{n}(q)$, these are diagonal
automorphisms or field automorphisms. Field automorphisms preserve
the subgroup $\SU_{n-1}(q)$. By a result of Wall \cite[p. 34, 13,
2]{Wall}, elements $a$ and $b$ of $\SU_{n-1}(q)$ lie in the same
$\GU_{n}(q)$-orbit if and only if $a$ and $b$ are conjugate in
$\GU_{n-1}(q)$.

The proof can now be completed as in the linear case by replacing
the groups $\SL_{n}(q)$, $\SL_{n-1}(q)$, $\PSL_{n-1}(q)$,
$\PSL_{n}(q)$ by $\SU_{n}(q)$, $\SU_{n-1}(q)$, $\PSU_{n-1}(q)$,
$\PSU_{n}(q)$ respectively.

Let $S$ and $T$ be projective symplectic groups.

Let $V$ be the natural $\Sp_{2n}(q)$-module of dimension $n$ defined
over the field of size $q$. The module $V$ is equipped with a
non-singular alternating bilinear form $f$. Let $W$ be a
$2$-dimensional non-singular subspace in $V$ with respect to $f$.
Let $U$ be the $(2n-2)$-dimensional non-singular subspace of $V$
perpendicular to $W$ with respect to the form $f$. There is a
subgroup $\Sp_{2n-2}(q)$ which acts naturally on $U$ and which fixes
$W$. The automorphisms of the simple group $\PSp_{2n}(q)$ are
described in \cite[Section 3.5.5]{Wilson}. All outer automorphisms
of $\PSp_{2n}(q)$ come from outer automorphisms of $\Sp_{2n}(q)$ and
these are diagonal automorphisms and field automorphisms since we
are assuming that $q$ is odd. All outer automorphisms of
$\Sp_{2n}(q)$ preserve the subgroup $\Sp_{2n-2}(q)$. By a result of
Wall \cite[p. 36]{Wall}, elements $a$ and $b$ of $\Sp_{2n-2}(q)$ lie
in the same $\Sp_{2n}(q)$-orbit if and only if $a$ and $b$ are
conjugate in $\Sp_{2n-2}(q)$. See also \cite[210, 211]{Fulman1}.

The proof can now be completed as in the linear or unitary case.
\end{proof}

To have good estimates of $n(\Aut(S),\Cl_{p'}(S))$ and
$n(\Aut(S),\Cl_{p}(S)\cup \Cl_{p'}(S))$, especially for low rank
classical groups and exceptional groups, we will use so-called
strongly self-centralizing maximal tori. A subgroup $T$ of $G$ is
said to be \emph{strongly self-centralizing} if $\bC_G(t)=T$ for
every $1\neq t\in T$. These groups are useful because of the
following lemma due to Babai, P\'alfy and Saxl.

\begin{lemma}\label{lemma-strongly-self-centralizing}
Let $G$ be a finite group with a strongly self-centralizing subgroup
$T$. Let $p$ be a prime.
\begin{itemize}
\item[(i)] If $p\mid |T|$, then \[|G_{p'}|>
|G|-\dfrac{|G|}{|\bN_G(T)/T|}\] and \[|G_{p}|\geq
\frac{|G|}{|\bN_G(T)/T|}\frac{|T|-1}{|T|}>
\frac{|G|}{|\bN_G(T)/T|+1}.\]

\item[(ii)] If $p\nmid |T|$, then
\[|G_{p'}|\geq\frac{|G|}{|\bN_G(T)/T|}\frac{|T|-1}{|T|}>
\dfrac{|G|}{|\bN_G(T)/T|+1}.\] Moreover, if $G$ contains pairwise
non-conjugate strongly self-centralizing subgroups $T_1$, $T_2$,
\ldots , $T_k$ such that $p\nmid |T_i|$ for all $1\leq i\leq k$,
then
\[
|G_{p'}|> \sum_{i=1}^k\dfrac{|G|}{|\bN_G(T_i)/T_i|+1}.
\]
\end{itemize}
\end{lemma}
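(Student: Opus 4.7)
The plan is to exploit the defining property of strongly self-centralizing subgroups via an intersection-triviality argument. First I would establish that any two distinct $G$-conjugates $T^g\neq T^h$ of $T$ satisfy $T^g\cap T^h=\{1\}$: a common nonidentity element $t$ would force $T^g=\bC_G(t)=T^h$, a contradiction. As a byproduct, $T$ itself is abelian, since for any two nonidentity $t_1,t_2\in T$ one has $t_2\in T=\bC_G(t_1)$. Consequently, the union $\bigcup_{g\in G}T^g$ contains exactly $[G:\bN_G(T)](|T|-1)+1$ elements.

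For part (i), with $p\mid|T|$, the next step is to show that the unique Sylow $p$-subgroup $T_p$ of the abelian group $T$ is in fact a full Sylow $p$-subgroup of $G$: any Sylow $p$-subgroup of $G$ containing $T_p$ must centralize every nontrivial $u\in T_p$, hence lies in $\bC_G(u)=T$ and so coincides with $T_p$. From this, every $p$-element of $G$, and therefore every $p$-singular element $x=us$ (with $s\in\bC_G(u)$ its $p'$-part), lies in some conjugate of $T$. Counting the nonidentity elements of $\bigcup_g T^g$ then yields $|G|-|G_{p'}|\leq [G:\bN_G(T)](|T|-1)<|G|/|\bN_G(T)/T|$, giving the first bound. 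The $|G_p|$ bound arises from a parallel count of the nontrivial $p$-elements contributed by each conjugate of $T$.

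For part (ii), with $p\nmid|T|$, every nonidentity element of $\bigcup_g T^g$ is automatically $p$-regular, so $|G_{p'}|\geq [G:\bN_G(T)](|T|-1)+1$, which rewrites as $\frac{|G|}{|\bN_G(T)/T|}\cdot\frac{|T|-1}{|T|}+1$. To pass to the cleaner form $|G|/(|\bN_G(T)/T|+1)$, I would use that $\bN_G(T)/T$ embeds into $\Aut(T)$ and, by the strong self-centralization hypothesis, acts fixed-point-freely on $T\setminus\{1\}$, so $|\bN_G(T)/T|$ divides $|T|-1$; this reduces the required inequality to $|\bN_G(T)/T|\leq |T|-1$, which now holds. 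The \emph{moreover} statement extends painlessly: pairwise non-conjugate strongly self-centralizing subgroups $T_i$ have pairwise trivially-intersecting conjugacy orbits by the very same centralizer argument, so the $p'$-contributions from the families $\{T_i^g\}_g$ add without overlap (apart from the identity), and each summand is bounded below by the corresponding $|G|/(|\bN_G(T_i)/T_i|+1)$.

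The conceptually delicate point is identifying $T_p$ as a Sylow $p$-subgroup of $G$ in case (i), which is the bridge that traps every $p$-singular element inside $\bigcup_g T^g$. Everything else is bookkeeping on this union, together with one elementary inequality powered by the divisibility $|\bN_G(T)/T|\mid|T|-1$ coming from the fixed-point-free action.
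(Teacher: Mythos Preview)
The paper does not actually prove this lemma; it defers entirely to \cite{Babai-Palfy-Saxl09}. Your self-contained argument is therefore not a parallel to the paper's proof but a replacement for the citation, and for the $|G_{p'}|$ inequalities in (i) and (ii) and for the ``moreover'' clause it is the standard Frobenius--type count and is correct: the trivial intersection of distinct conjugates, the abelianness of $T$, and the divisibility $|\bN_G(T)/T|\mid |T|-1$ (from the fixed-point-free action of $\bN_G(T)/T$ on $T\setminus\{1\}$) are exactly the ingredients required.

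Two places need attention. First, your one-line justification that $T_p\in\Syl_p(G)$ asserts that a Sylow $P\supseteq T_p$ ``must centralize every nontrivial $u\in T_p$'', but that is the conclusion, not an argument. The usual fix: if $T_p<P$ then, since normalizers grow in $p$-groups, there exists $x\in\bN_P(T_p)\setminus T_p$; as a $p$-element acting on the $p$-group $T_p$, $x$ centralizes some $1\neq u\in T_p$, whence $x\in\bC_G(u)=T$ and so $x\in T_p$, a contradiction.

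Second, and more substantively, your ``parallel count'' for the $|G_p|$ bound does not give what is claimed. Summing nontrivial $p$-elements over the conjugates of $T$ yields exactly
\[
|G_p|=[G:\bN_G(T)]\,(|T_p|-1)=\frac{|G|}{|\bN_G(T)/T|}\cdot\frac{|T_p|-1}{|T|},
\]
whereas the displayed lower bound has $|T|-1$ in place of $|T_p|-1$. These coincide only when $T$ is itself a $p$-group; otherwise your count is strictly smaller than the asserted bound, and no bookkeeping on $\bigcup_g T^g$ will close the gap. You should either supply a genuinely different argument for this inequality or flag that, as written, it does not follow from the trivial-intersection count (and indeed appears not to hold for arbitrary strongly self-centralizing $T$ with $p\mid|T|$).
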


\begin{proof}
See \cite[Proposition 1.15]{Babai-Palfy-Saxl09} and its proof. There
the authors used the language of proportion of $p$-regular elements
but one can transfer to the number of $p$-regular elements.
\end{proof}

\subsection{Alternating groups} We finish this section by proving
Theorem~\ref{proposition-simple-groups} for the alternating groups.

\begin{lemma}
\label{LemAlt} Theorem \ref{proposition-simple-groups} holds for $S
= \Al_{n}$ with $n \geq 5$.
\end{lemma}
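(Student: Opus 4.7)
The plan is to reduce to a small base case via Lemma~\ref{piPSL} and then exhibit enough $p$-regular cycle types by hand.

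By Lemma~\ref{piPSL}, for every $n\geq 6$ and every $\pi\in\{p,p'\}$,
\[
n(\Aut(\Al_n),\Cl_\pi(\Al_n))\geq n(\Aut(\Al_{n-1}),\Cl_\pi(\Al_{n-1})).
\]
Since $p\mid|\Al_n|$ forces $n\geq p$, iterating this will reduce the three parts of Theorem~\ref{proposition-simple-groups} for $\Al_n$ to the base case $\Al_p$ (when $p\geq 7$), together with the small cases $\Al_5,\Al_6,\Al_7$ at $p=5$.

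For the base case $\Al_p$ with $p\geq 7$, since $p$ is odd a permutation of $p$ points with cycle type $\lambda$ lies in $\Al_p$ iff $\ell(\lambda)$ is odd, and is $p$-regular iff $\lambda\neq(p)$. Writing $p_o(p)$ for the number of partitions of $p$ with odd length, I therefore get
\[
n(\Sy_p,\Cl_{p'}(\Al_p))=p_o(p)-1\quad\text{and}\quad n(\Sy_p,\Cl_p(\Al_p))=1,
\]
the latter because the two $\Al_p$-classes of $p$-cycles fuse in $\Sy_p$. The key estimate $p_o(p)-1>2\sqrt{p-1}$ will be proved by exhibiting three pairwise disjoint families of odd-length partitions of $p$: the hooks $(k,1^{p-k})$ with $k$ odd and $1\leq k\leq p-2$, giving $(p-1)/2$ partitions; the partition $(2,2,1^{p-4})$; and the partition $(3,3,1^{p-6})$, which exists since $p\geq 7$. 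These yield $p_o(p)-1\geq(p-1)/2+2$, and
\[
\bigl((p-1)/2+2\bigr)^{2}-\bigl(2\sqrt{p-1}\bigr)^{2}=\tfrac{1}{4}(p-5)^{2}
\]
is strictly positive for $p\neq 5$, proving (iii). Adding the single $p$-element orbit then gives (i), and (ii) is immediate from $2\sqrt{p-1}\geq 2(p-1)^{1/4}$.

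For $p=5$ the reduction only descends to $\Al_7$. A direct enumeration of partitions of $7$ with no part equal to $5$ and odd length produces seven partitions, so $n(\Sy_7,\Cl_{5'}(\Al_7))=7>4=2\sqrt{p-1}$, and together with the unique $\Sy_7$-orbit on $\Cl_5(\Al_7)$ this settles $\Al_n$ for all $n\geq 7$. The remaining cases $\Al_5$ and $\Al_6$ will be checked by inspection of the conjugacy-class tables: $n(\Sy_5,\Cl_{5'})=3$ and $n(\Sy_5,\Cl_5)=1$, producing the unique equality $4=2\sqrt{p-1}$ in (i); and $n(\Aut(\Al_6),\Cl_{5'})=4$ (matching Table~\ref{table-exceptions}) with $n(\Aut(\Al_6),\Cl_5)=1$, giving $5>4$ in (i). Part (ii) holds strictly throughout. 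The hard part will be the sharpness of the naive hook-partition bound at $p=17$, where $(p-1)/2=2\sqrt{p-1}$; the two extra families $(2,2,1^{p-4})$ and $(3,3,1^{p-6})$ are inserted precisely to convert the bound into a strict inequality uniformly over $p\geq 7$.
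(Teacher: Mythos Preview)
Your proof is correct and follows the same strategy as the paper: reduce via Lemma~\ref{piPSL} to the base case $\Al_p$ (for $p\geq 7$) and handle $\Al_5,\Al_6,\Al_7$ directly for $p=5$. The only difference is the counting in the base case: you supplement the $(p-1)/2$ odd-$k$ hooks with the two extra partitions $(2,2,1^{p-4})$ and $(3,3,1^{p-6})$ to obtain the clean inequality $(p-1)/2+2>2\sqrt{p-1}$ via the identity $\bigl((p+3)/2\bigr)^2-4(p-1)=(p-5)^2/4$, whereas the paper instead adds the $\lfloor p/3\rfloor$ cycle types of elements of order~$3$.
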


\begin{proof}
Assume first that $p \geq 7$. We claim that $n(\Aut(S),\Cl_{p'}(S))
> 2 \sqrt{p-1}$. Assume for a contradiction that $S = \Al_{n}$ is a
counterexample to the claim with $n \geq 7$ minimal. The prime $p$
divides $|\Al_{n}|$ but does not divide $|\Al_{n-1}|$ by Lemma
\ref{piPSL}. It follows that $n = p$. The group $\Al_p$ has cycles
of every odd length up to $p-2$ and has $\lfloor p/3 \rfloor$ cycle
types of elements of order $3$. Therefore we have \[
n(\Aut(S),\Cl_{p'}(S))\geq (p-1)/2 + \lfloor p/3 \rfloor > 2
\sqrt{p-1},\] from which the claim follows.

Let $p = 5$. From the previous paragraph we have
$n(\Aut(\Al_{7}),\Cl_{p'}(\Al_{7})) > 2 \sqrt{p-1}$. This implies
$n(\Aut(S),\Cl_{p'}(S)) > 2 \sqrt{p-1}$ for $n \geq 7$ by Lemma
\ref{piPSL}. We find $n(\Aut(\Al_{6}),\mathrm{Cl}_{5'}(\Al_{6})) =
4$ and $n(\Aut(\Al_{5}),\mathrm{Cl}_{5'}(\Al_{5})) = 3$. Parts (ii)
and (iii) follow.

The number of orbits of $\Aut(S)$ on $\mathrm{Cl}_{p}(S) \cup
\mathrm{Cl}_{p'}(S)$ is $4$ if $S = \Al_{5}$ and is $5$ if $S =
\Al_{6}$. Part (i) follows and the proof is complete.
\end{proof}

%%%%%%%%%%%%%%%%%%%%%%%%%%%%%%%%%%%%%%%%%%%%%%%%%%%%%%%%%%%%%

%%%%%%%%%%%%%%%%%%%%%%%%%%%%%%%%%%%%%%%%%%%%%%%%%%

\section{Theorem \ref{proposition-simple-groups}: Linear and unitary
groups}\label{section-linear-unitary}

\subsection{Linear groups}\label{subsection-linear} In this subsection, we will prove
Theorem~\ref{proposition-simple-groups}(i) for $S=\PSL_n(q)$ with
$n\geq 2$, $q=\ell^f$ where $\ell$ is a prime, and $(n,q)\notin
\{(2,2), (2,3)\}$. We keep the notation introduced in
Section~\ref{section-some-generalities} and start with the following
technical lemma.

\begin{lemma}
\label{primitiveprimedivisor} Let $n \geq 2$ be an integer and let
$\epsilon$ be $1$ or $2$ depending on whether $n = 2$ or $n > 2$
respectively. Let $S = \PSL_{n}(q)$ be a simple group. Let $$m =
\frac{q^{n}-1}{(q-1)(n,q-1)}.$$ If $p$ divides $m$, then
$$n(\Aut(S),\Cl_{p}(S)) \geq \frac{p-1}{\epsilon f n}.$$ If $p$ does
not divide $m$, then $$n(\Aut(S),\Cl_{p'}(S)) \geq
\frac{\varphi(m)}{\epsilon f n}$$ where $\varphi$ is Euler's totient
function.
\end{lemma}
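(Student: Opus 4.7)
The plan is to use a Singer torus of $\PSL_n(q)$. Let $\widetilde T \leq \GL_n(q)$ be a Singer cycle, of order $q^n - 1$, and let $T$ denote its image in $S = \PSL_n(q)$, which is cyclic of order $m$ and strongly self-centralizing. The associated Weyl group $W := \bN_S(T)/T$ is cyclic of order $n$ and acts on $T$ as the Frobenius $x \mapsto x^q$, once $T$ is identified with the appropriate subquotient of $\FF_{q^n}^{\times}$.

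I would next bound $|\bN_{\Aut(S)}(T)/T|$. Using the description of $\Out(S)$ in \cite[Section 3.3.4]{Wilson}: diagonal automorphisms act trivially on $T$ because they are induced by conjugation by elements of the ambient Singer cycle $\widetilde T$; a generator of the field-automorphism group acts on $T$ as $x \mapsto x^{\ell}$, and combined with $W$ this produces a cyclic group of order dividing $fn$; and for $n \geq 3$, the graph automorphism (realized as the inverse-transpose with respect to a suitable basis) normalizes $T$ and acts by inversion, at worst doubling this group. Altogether $|\bN_{\Aut(S)}(T)/T| \leq \epsilon f n$.

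For the counting step: since $T$ is strongly self-centralizing, $S$-conjugacy classes of non-identity elements meeting $T$ are in bijection with $W$-orbits on $T \setminus \{1\}$. Moreover, Singer cyclic subgroups of $S$ form a single $\Aut(S)$-invariant $S$-conjugacy class of subgroups, so the $\Aut(S)$-orbits on the $S$-classes that meet $T$ are in bijection with the $(\bN_{\Aut(S)}(T)/T)$-orbits on $T \setminus \{1\}$. If $p \mid m$, the unique cyclic subgroup of order $p$ in $T$ contributes $p - 1$ non-trivial $p$-elements, yielding at least $(p-1)/(\epsilon f n)$ $\Aut(S)$-orbits on $\Cl_p(S)$. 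If $p \nmid m$, then all $\varphi(m)$ generators of $T$ are $p$-regular, yielding at least $\varphi(m)/(\epsilon f n)$ $\Aut(S)$-orbits on $\Cl_{p'}(S)$.

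The main obstacle lies in the detailed verification of the bound $|\bN_{\Aut(S)}(T)/T| \leq \epsilon f n$. The triviality of diagonal automorphisms on $T$ rests on the fact that a generator of $\widetilde T$ has determinant generating $\FF_q^{\times}$, so every coset of $\SL_n(q)\bZ(\GL_n(q))$ in $\GL_n(q)$ has a representative centralizing $T$ modulo the centre. The graph-automorphism statement for $n \geq 3$ follows by choosing a basis under which the inverse-transpose preserves $\widetilde T$ setwise and acts on it by inversion. Once these are in place, the counting argument is a routine application of the strong self-centralizing property of $T$ together with the $\Aut(S)$-invariance of the class of Singer subgroups.
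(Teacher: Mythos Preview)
Your approach is the same as the paper's: exploit the image $T$ of a Singer cycle in $S$ and bound the group of automorphisms of $T$ induced by $\Aut(S)$. Two points need correcting.

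First, a notational slip. What your argument actually bounds is $|\bN_{\Aut(S)}(T)/\bC_{\Aut(S)}(T)|$, the group acting faithfully on $T$, not $|\bN_{\Aut(S)}(T)/T|$. Since the diagonal automorphisms centralize $T$ (as you correctly note), one has $\bC_{\Aut(S)}(T)\supsetneq T$ whenever $(n,q-1)>1$, and the two quotients differ by that factor. It is the former quotient that controls the orbit count, so the argument survives once the correct quotient is written. The paper reaches the same bound more economically: working inside $B\cong\PGL_n(q)$ it reads off $|\bN_B(\langle a\rangle)/\bC_B(a)|=n$ from the normalizer of a Singer cycle in $\GL_n(q)$, and then simply uses the index $|A:B|=\epsilon f$, thereby avoiding any explicit analysis of how field or graph automorphisms act on $T$.

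Second, the assertion that $T$ is strongly self-centralizing in $S$ is false in general: an element of $T$ corresponding under the Singer identification to some $\lambda\in\FF_{q^n}^{\times}$ lying in a proper subfield $\FF_{q^d}$ fails to act irreducibly and has centralizer strictly larger than $T$. This does no harm to your $p\nmid m$ case, since there you only use generators of $T$, and those do act irreducibly. It does, however, undermine your $p\mid m$ case whenever $p$ is not a primitive prime divisor of $q^{n}-1$: the passage from $A$-orbits to $\bN_A(T)$-orbits on the order-$p$ elements then breaks down. The paper works throughout with a generator $a$ of $T$ (so irreducibility is guaranteed), although its closing sentence for the case $p\mid m$ is itself terse, producing classes of elements of order $m$ rather than $p$-elements; in every application of the lemma in the paper $p$ is in fact a primitive prime divisor of $q^{n}-1$, so the order-$p$ elements of $T$ do act irreducibly and both arguments go through there.
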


\begin{proof}
Let $A = \Aut(S)$ and let $a$ be an element of $S$. Let $g$ be the
preimage of $a$ in $\SL_{n}(q) \leq \GL_{n}(q)$. Assume that $g$
acts irreducibly on $V$. The centralizer of $g$ in $\GL_{n}(q)$ is a
cyclic group $C$ of order $q^{n}-1$ and the normalizer of $\langle g
\rangle$ in $\GL_{n}(q)$ is $C:\langle \sigma \rangle$ where
$\sigma$ is a field automorphism of order $n$. There is a subgroup
$B$ in $A$ defined in a natural way which contains $S$ and which is
isomorphic to $\PGL_{n}(q)$. We have $|B:S| = (n,q-1)$ and $|A:B| =
\epsilon f$. Observe that $$|\bC_{B}(a)| = \frac{|C|}{(n,q-1)}$$ and
$$|\bN_{B}(\langle a \rangle)| = \frac{n|C|}{(n,q-1)}.$$ It follows
that $|\bC_{A}(a)| \geq |\bC_{B}(a)| = |C|/(n,q-1)$ and
$$|\bN_{A}(\langle a \rangle)| \leq \epsilon f |\bN_{B}(\langle a
\rangle)| \leq \frac{\epsilon fn|C|}{(n,q-1)}.$$ Thus
$|\bN_{A}(\langle a \rangle)/\bC_{A}(\langle a \rangle)| \leq
\epsilon fn$. Assume now that $a$ is of order $m$. It follows that
there are at least $\varphi(m)/(\epsilon fn)$ conjugacy classes of
$A$ all contained in $S$ which consist of elements of order $m$. The
desired bound now follows in the case when $p$ does not divide $m$.
If $p$ divides $m$, then the bound also follows by noting that
$\varphi(m) \geq p-1$.
\end{proof}

\begin{lemma}\label{PSL2}
Theorem \ref{proposition-simple-groups} holds for $S = \PSL_{2}(q)$
with $q \geq 4$.
\end{lemma}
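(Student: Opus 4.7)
The plan is to exploit the explicit conjugacy class structure of $S = \PSL_{2}(q)$ via its two conjugacy classes of maximal tori. Set $d = (2,q-1)$, write $q = \ell^{f}$, and let $m_{-} = (q-1)/d$ and $m_{+} = (q+1)/d$ denote the orders of the split and nonsplit maximal tori. Then $|\Out(S)| \leq 2f$, arising from field automorphisms of order $f$ and (for $q$ odd) a diagonal automorphism. By the reductions in Section~\ref{section-some-generalities} we may assume $p \geq 5$, which forces $p$ to divide exactly one of $\ell$, $m_{-}$, $m_{+}$.

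First I would treat three cases according to which factor $p$ divides. In Case~(A), $p = \ell$, so both tori are entirely $p$-regular; Lemma~\ref{primitiveprimedivisor} (with $n=2$, $\epsilon=1$) applied to the nonsplit torus, together with its direct analog for the split torus (using $\bN_{S}(T_{-})/T_{-} \cong C_{2}$ and the action of field automorphisms), furnishes at least $(\varphi(m_{+}) + \varphi(m_{-}))/(2f)$ $\Aut(S)$-orbits on $\Cl_{p'}(S)$, while the one or two unipotent classes of $S$ give the $\Cl_{p}(S)$ contribution needed for part~(i). In Case~(B), $p \mid m_{-}$, the nonsplit torus is entirely $p$-regular, so Lemma~\ref{primitiveprimedivisor} yields $\geq \varphi(m_{+})/(2f)$ orbits in $\Cl_{p'}(S)$, and the non-trivial $p$-elements lie in a cyclic Sylow $p$-subgroup of the split torus, providing an extra contribution to $\Cl_{p}(S)$. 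Case~(C), $p \mid m_{+}$, is symmetric.

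Combining these bounds with $q = p^{f}$ in Case~(A), $q \geq p+1$ in Case~(B), and $q \geq p-1$ in Case~(C), and using the elementary estimate $\varphi(m) \geq \sqrt{m/2}$ for $m \geq 2$, one sees that the bounds obtained comfortably exceed $2\sqrt{p-1}$ once $q$ is large enough in terms of $p$. This reduces the proof of each of (i), (ii), (iii) to a finite check over an explicit short list of pairs $(q,p)$; note that (ii) follows trivially from (iii) except at the exceptional pairs.

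The main obstacle, and the bulk of the work, is this finite verification. For each remaining pair $(q,p)$ I would compute $k_{p'}(S)$, $k_{p}(S)$, and the action of $\Aut(S) = \PGL_{2}(q) \rtimes C_{f}$ on conjugacy classes, either by reading the character table of $\PSL_{2}(q)$ from \cite{Atl1} or by direct calculation in \cite{GAP}. The exceptional pairs $(\PSL_{2}(q),p)$ listed in Table~\ref{table-exceptions} emerge precisely from this enumeration. In particular, $(\PSL_{2}(16),17)$ is the unique pair, other than $(\Al_{5},5) = (\PSL_{2}(4),5)$ already handled in Lemma~\ref{LemAlt}, for which the bound in part~(i) just fails, and it realizes the equality case of part~(ii).
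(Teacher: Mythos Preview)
Your three-case split according to whether $p$ divides $\ell$, $m_{-}$, or $m_{+}$, together with the use of Lemma~\ref{primitiveprimedivisor} for the nonsplit torus, is exactly the skeleton of the paper's argument. The problem is the quantitative step: the bound $\varphi(m)\geq\sqrt{m/2}$ is far too weak to reduce to a finite list of pairs $(q,p)$. In Case~(C), for instance, your ``symmetric'' bound gives roughly $\varphi(m_{-})/(2f)\geq \sqrt{(q-1)/(2d)}/(2f)$, while the target $2\sqrt{p-1}$ can be as large as $2\sqrt{(q+1)/d-1}$; the inequality $\sqrt{q}/(cf)>2\sqrt{q}$ simply never holds, so no value of $q$ is disposed of by this estimate. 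The same failure occurs in Case~(A) with $f=1$ and in Case~(B) whenever $p$ is the largest prime factor of $q-1$. Thus your ``explicit short list'' is in fact infinite.

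The paper avoids this by not relying on $\varphi$ of the opposite torus. In the case $p\mid q+1$ it counts \emph{all} diagonal elements of $\SL_{2}(q)$, obtaining $n(\Aut(S),\Cl_{p'}(S))\geq (q-1)/(2f)$, a bound linear in $q$ rather than of order $\sqrt{q}$; this exceeds $2\sqrt{q}\geq 2\sqrt{p-1}$ once $q>256$ (with $q\notin\{512,1024\}$ checked separately). In the case $p=\ell$ it exploits the $(\ell-1)/2$ diagonal elements of the subfield subgroup $\SL_{2}(\ell)$, which are fixed by field automorphisms and hence contribute $(\ell-1)/2$ full $\Aut(S)$-orbits (with no division by $f$). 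Only in the residual case $p\mid q-1$ does the paper fall back on the $\varphi(m_{+})/(2f)$ bound from Lemma~\ref{primitiveprimedivisor}, and there one needs the genuine asymptotic $\varphi(m)\gg m/\log\log m$, together with a direct computation for $q\leq 256$, to cut down to the two stragglers $q\in\{263,359\}$. Replacing your $\sqrt{m/2}$ estimate with these sharper inputs would repair the argument.
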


\begin{proof}
Let $q \leq 256$. By a Gap \cite{GAP} calculation
$n(\Aut(S),\Cl_{p'}(S)) > 2 \sqrt{p-1}$ unless $q \in \{ 4, 5, 7, 8,
9, 11, 16, 27, 32, 81, 128, 243, 256 \}$. The exceptional cases
account for the possibilities in Table \ref{table-exceptions}.
Furthermore, if $q$ belongs to $\{ 512, 1024 \}$, then
$n(\Aut(S),\Cl_{p'}(S)) > 2 \sqrt{p-1}$ for every possible value of
$p$.

Assume first that $p$ divides $q+1$. There are $q-1$ diagonal
elements in $\SL_{2}(q)$ with respect to a fixed basis. Thus there
are at least $q-1$ conjugacy classes of $p$-regular elements in
$\GL_{2}(q)$ and so at least $(q-1)/2$ conjugacy classes of
$\PGL_{2}(q)$ consisting of $p$-regular elements in $\PSL_{2}(q)$.
Thus $n(\Aut(S),\Cl_{p'}(S)) \geq (q-1)/(2f)$. This is larger than
$2 \sqrt{q}$ subject to the restrictions $q > 256$ and $q \not\in \{
512, 1024 \}$. This proves parts (ii) and (iii) in the case when $p$
divides $q+1$. We now turn to the proof of part (i) in the case when
$p$ divides $q+1$. We may assume that the pair $(S,p)$ appears in
Table~\ref{table-exceptions}.

We have $n(\Aut(S),\Cl_{p}(S)) \geq (p-1)/(2 f)$ by Lemma
\ref{primitiveprimedivisor}. The exact values of
$n(\Aut(S),\Cl_{p'}(S))$ may be found in Table
\ref{table-exceptions}. Using this information, for any pair $(S,p)$
in Table \ref{table-exceptions} such that $p$ divides $q+1$, we get
$$n(\Aut(S),\Cl_{p'}(S)) + n(\Aut(S),\Cl_{p}(S)) > 2 \sqrt{p-1},$$
unless $(S,p) = (\PSL_{2}(16),17)$ when $n(\Aut(S),\Cl_{p'}(S)) +
n(\Aut(S),\Cl_{p}(S)) = 7$. This latter pair is an exception in part
(i). This proves part (i) in the case $p$ divides $q+1$.

Assume now that $p$ does not divide $q+1$. In this case $p = \ell$
or $p \mid q-1$.

Let $p = \ell$. There are at least $(\ell-1)/2$ diagonal elements in
$\SL_{2}(\ell)$. These elements are fixed by field automorphisms and
no two of them are conjugate in $\GL_{2}(q)$. Thus
$n(\Aut(S),\Cl_{p'}(S)) \geq (\ell-1)/2$. In fact, by the proof of
Lemma \ref{primitiveprimedivisor}, $$n(\Aut(S),\Cl_{p'}(S)) \geq
\varphi \Big(\frac{q+1}{(2,q-1)}\Big)/(2f) + (\ell-1)/2.$$ This is
larger than $2 \sqrt{\ell - 1}$ unless $\ell = q \in \{ 5, 7, 11
\}$. The pairs $(S,p)$ in $$\{ (\PSL_{2}(5),5), (\PSL_{2}(7),7),
(\PSL_{2}(11),11) \}$$ appear in Table \ref{table-exceptions}. This
proves parts (ii) and (iii) in the case $p = \ell$. Part (i) in the
case $p = \ell$ also follows by a direct check using \cite{Atl1}.

Finally, assume that $p$ divides $q-1$. We may assume by the first
paragraph of this proof that $q > 256$ and $q \not\in \{ 512, 1024
\}$. We have $$n(\Aut(S),\Cl_{p'}(S)) \geq \varphi
\Big(\frac{q+1}{(2,q-1)}\Big)/(2f)$$ by Lemma
\ref{primitiveprimedivisor}. This is larger than $2 \sqrt{p-1}$
unless $q \in \{ 263, 359 \}$. Another \cite{GAP} calculation gives
$n(\Aut(S),\Cl_{p'}(S)) > 2 \sqrt{p-1}$ for $S \in \{ \PSL_{2}(263),
\PSL_{2}(359) \}$.
\end{proof}

%Finally, assume that $p$ divides $q-1$. There are at least $p-1$
%diagonal elements of order $p$ inside $\SL_{2}(q)$. No two of these
%are conjugate in $\GL_{2}(q)$. Thus $n(\Aut(S),\Cl_{p}(S)) \geq
%(p-1)/((2,q-1)f)$. We get $$n(\Aut(S),\Cl_{p}(S)) +
%n(\Aut(S),\Cl_{p'}(S)) \geq$$ $$\geq \frac{p-1}{(2,q-1)f} + (2,q-1)f
%+ \varphi \Big(\frac{q+1}{(2,q-1)}\Big)/(2f) - (2,q-1)f  \geq$$
%$$\geq 2 \sqrt{p-1} +  \varphi \Big(\frac{q+1}{(2,q-1)}\Big)/(2f) -
%(2,q-1)f.$$ It is thus sufficient to see that $\varphi
%((q+1)/(2,q-1)) > 2(2,q-1)f^{2}$. This holds unless possibly if
%$\ell \leq 64$ and $f \leq 24$. Moreover, by a \cite{GAP}
%computation, the previous inequality fails if and only if $q$
%belongs to the set $$\{ 7, 11, 19, 23, 3^{2}, 5^{2}, 2^{3}, 3^{3},
%5^{3}, 2^{4}, 3^{4}, 2^{5}, 3^{5}, 2^{6}, 2^{7} \}.$$ Since $p \geq
%5$ and $p \mid q-1$, the pair $(q,p)$ must be $(11,5)$, $(23,11)$,
%$(2^{3},7)$, $(3^{3},13)$, $(5^{3},31)$, $(2^{4},5)$, $(3^{4},5)$,
%$(2^{5},31)$, $(3^{5},11)$, $(2^{6},7)$ or $(2^{7},127)$. In these
%eleven exceptional cases we obtain by a \cite{GAP} computation that
%$$\frac{p-1}{(2,q-1)f} + n(\Aut(S),\Cl_{p'}(S)) > 2 \sqrt{p-1},$$
%completing the proof.

\begin{lemma}
\label{PSL3} Theorem \ref{proposition-simple-groups} holds for $S =
\PSL_{3}(q)$.
\end{lemma}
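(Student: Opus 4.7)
My plan is to handle $S=\PSL_{3}(q)$ by a case analysis on how the prime $p$ divides $|S|$, reducing to $\PSL_{2}(q)$ via Lemma~\ref{piPSL} whenever possible, and otherwise invoking Lemma~\ref{primitiveprimedivisor} with the Coxeter torus. Note that every prime divisor of $|S|=q^{3}(q-1)^{2}(q+1)(q^{2}+q+1)/(3,q-1)$ divides exactly one of $\ell$, $q-1$, $q+1$, or $m:=(q^{2}+q+1)/(3,q-1)$. I first dispatch a finite list of small values of $q$ (roughly $q\leq 16$, chosen large enough to make the generic estimates below beat $2\sqrt{p-1}$) by direct \textsf{GAP} computation; the only case producing an exception in Table~\ref{table-exceptions} is $(\PSL_{3}(8),73)$, for which one verifies parts (i) and (ii) directly.

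For the generic case $q>q_{0}$ I would argue as follows. If $p\mid q-1$ or $p\mid q+1$, then Lemma~\ref{piPSL} gives
\[
n(\Aut(S),\Cl_{\pi}(S))\;\geq\;n(\Aut(\PSL_{2}(q)),\Cl_{\pi}(\PSL_{2}(q))),
\]
and Lemma~\ref{PSL2} finishes the argument, provided one checks that each $\PSL_{2}(q)$-exception in Table~\ref{table-exceptions} either does not occur at rank $3$ or else already appears there. If $p=\ell$, then $p\nmid m$ (since $m\mid q^{3}-1$ is coprime to $q$), so Lemma~\ref{primitiveprimedivisor} with $n=3$, $\epsilon=2$, and normalizer index $\epsilon fn=6f$ yields $n(\Aut(S),\Cl_{p'}(S))\geq \varphi(m)/(6f)$, and since $m\approx q^{2}/3$ and $\varphi(m)$ is close to $m$, this comfortably exceeds $2\sqrt{\ell-1}\leq 2\sqrt{q-1}$ for $q>q_{0}$. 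If $p\mid m$ (so $p$ can be as large as $\approx q^{2}$, which is the tight regime), Lemma~\ref{primitiveprimedivisor} produces $n(\Aut(S),\Cl_{p}(S))\geq (p-1)/(6f)$, giving part~(i) once we add this to the $p$-regular count; for parts (ii) and (iii) I would bound $n(\Aut(S),\Cl_{p'}(S))$ from below by counting orbits on the split torus of order $(q-1)^{2}/(3,q-1)$, whose every element is $p$-regular (as $p\nmid q-1$) and on which $\Aut(S)$ acts via at most the Weyl group $\Sy_{3}$, the diagonal automorphisms, and the $f$ field automorphisms, so that at least $(q-1)^{2}/(18(3,q-1)f)$ orbits arise.

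The main obstacle will be the subcase $p\mid m$ near the transition threshold $q_{0}$, where $p$ is of order $q^{2}$ and the bound $2\sqrt{p-1}$ is comparable to $q$; here the split-torus estimate and the Coxeter-torus estimate must each individually clear $2\sqrt{p-1}$ (for (iii)) and jointly clear it (for (i)), so the threshold $q_{0}$ has to be chosen carefully and the exception $(\PSL_{3}(8),73)$ correctly identified. A secondary technicality is tracking the passage between $\SL_{3}(q)$ and $\PSL_{3}(q)=S$ (i.e.\ the quotient by the center of order $(3,q-1)$) and the effect of the graph automorphism (inverse-transpose), which may fuse a class with its inverse and halve some of the orbit counts; these factors are absorbed in the constant $\epsilon fn=6f$ appearing in Lemma~\ref{primitiveprimedivisor}.
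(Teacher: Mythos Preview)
Your approach is genuinely different from the paper's, and it is mostly workable, but there is one concrete gap in the branch $p\mid q\pm 1$. The paper does \emph{not} reduce to $\PSL_{2}(q)$ via Lemma~\ref{piPSL}; instead it uses the strongly self-centralizing Coxeter torus $T$ of order $(q^{2}+q+1)/(3,q-1)$ together with Lemma~\ref{lemma-strongly-self-centralizing} and the minimum centralizer bound $|\bC_{S}(g)|\geq q^{2}/(3,q-1)$ from \cite{Simpson73}. Splitting only into $p\mid |T|$ versus $p\nmid |T|$, it obtains $n(\Aut(S),\Cl_{p'}(S))>q^{2}/(3f(3,q-1)^{2})$ in the first case and roughly $q^{2}/(6f(3,q-1)^{2})$ in the second, which beat $2\sqrt{p-1}$ uniformly once $q$ is outside a short finite list ($q\leq 19$ plus $q\in\{25,49,64\}$). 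This two-case split is cleaner than your four-way split precisely because it never calls Lemma~\ref{PSL2}.

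The problem with invoking Lemma~\ref{PSL2} when $p\mid q\pm 1$ is that the $\PSL_{2}$ exceptions in Table~\ref{table-exceptions} persist for $q$ as large as $256$, far beyond your stated threshold $q_{0}\approx 16$. For instance, with $q=27$ and $p=13$ (so $p\mid q-1$), Lemma~\ref{piPSL} only gives $n(\Aut(\PSL_{3}(27)),\Cl_{13'})\geq n(\Aut(\PSL_{2}(27)),\Cl_{13'})=5<2\sqrt{12}$; similarly for $q\in\{32,81,128,243,256\}$. Your parenthetical ``provided one checks\dots'' is correct, but it means either raising $q_{0}$ to $256$ (a long GAP run) or providing for each of these $q$ an alternative bound---e.g.\ your own Coxeter- or split-torus count, since $p\nmid m$ in all these cases. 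The paper's route avoids this entirely: Lemma~\ref{lemma-strongly-self-centralizing} delivers the $p$-regular proportion directly, and the $\PSL_{2}$ exceptions never enter.
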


\begin{proof}
We will show that $n(\Aut(S),\Cl_{p'}(S)) > 2 \sqrt{p-1}$
in all cases except when $(S,p) = (\PSL_{3}(8),73)$.

We may exclude $q \leq 9$ using \cite{Atl1} and $q \in \{ 13, 16, 19
\}$ using \cite{GAP}.

From \cite{Simpson73} we observe that $|\bC_S(g)|\geq q^2/(3,q-1)$
for every $g\in S$. We also know that $S$ has a strongly
self-centralizing maximal torus $T$ of order
$$(q^2+q+1)/(3,q-1)=\Phi_3(q)/(3,q-1)$$ with $|\bN_S(T)/T|=3$, see
\cite[p. 16]{Babai-Palfy-Saxl09} for instance.

Suppose first that $p\mid |T|$. Then by Lemma
\ref{lemma-strongly-self-centralizing}, we have $|S_{p'}|
>\frac{2}{3}|S|$ and thus
\[
k_{p'}(S)>\frac{2q^2}{3(3,q-1)},
\]
which yields
\[n(\Aut(S),\Cl_{p'}(S))>\frac{q^2}{3f(3,q-1)^2}=:R(q).\]
One can check that $R(q)\geq 2\sqrt{\Phi_3(q)/(3,q-1)-1}$ unless $q
\in \{ 25, 49, 64 \}$ (as we already excluded the case $q \in \{ 13,
16, 19 \}$). Checking further, we find that the desired inequality
$R(q)\geq 2\sqrt{p-1}$ still holds.

Now we suppose $p\nmid |T|$. By Lemma
\ref{lemma-strongly-self-centralizing}, we have $|S_{p'}|
>|S|(|T|-1)/3|T|$. Hence
\[k_{p'}(S)>\frac{q^2(|T|-1)}{3(3,q-1)|T|},\] implying that
\[n(\Aut(S),\Cl_{p'}(S))>\frac{q^2(|T|-1)}{6f(3,q-1)^2|T|}:=R'(q).\]
It is easy to check that $R'(q)\geq 2\sqrt{q}\geq 2\sqrt{p-1}$
unless $q \in \{ 25, 64 \}$ (as we excluded the case $q \in \{ 13,
16 \}$). In fact we still have $R'(q)\geq 2\sqrt{p-1}$ when $q \in
\{ 25, 64 \}$ since $p\leq 13$ in those cases. This proves parts
(ii) and (iii) by noting that the pair $(S,p) = (\PSL_{3}(8),73)$
appears in Table \ref{table-exceptions}.

For the proof of part (i) we may now assume that $(S,p) =
(\PSL_{3}(8),73)$. Then $S$ has a maximal torus of order
$\Phi_3(8)=73$ with the relative Weyl group of order $3$, and thus
$S$ has at least $72/3=24$ conjugacy classes of elements of order
73. It follows that there are at least $24/3=8$ $\Aut(S)$-orbits on
$\Cl_p(S)$. We now have $n(\Aut(S),\Cl_p(S)\cup \Cl_{p'}(S))\geq
8+13>2\sqrt{p-1}$, as desired.
\end{proof}

\begin{lemma}
Theorem \ref{proposition-simple-groups}(i) holds for $S =
\PSL_{n}(q)$ with $n \geq 4$.
\end{lemma}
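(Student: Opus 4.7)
The plan is to induct on $n \geq 4$, with Lemmas~\ref{PSL2} and~\ref{PSL3} serving as base cases for any descent that lands in $\PSL_2$ or $\PSL_3$. Fix a prime $p \geq 5$ dividing $|S|$ for $S = \PSL_n(q)$ with $n \geq 4$, and set $T := \PSL_{n-1}(q)$. First I would dispose of the defining characteristic: when $p = \ell$, the rank $n - 1 \geq 3$ and Lemma~\ref{lemma-defining-characteristic} already delivers $n(\Aut(S), \Cl_{p'}(S)) > 2\sqrt{p-1}$. From here on assume $p \neq \ell$ and split according to whether $p$ divides $|T|$.

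The easier half is when $p \mid |T|$: applying Lemma~\ref{piPSL} separately to $\pi = p$ and $\pi = p'$ and adding (the sets $\Cl_p(\cdot)$ and $\Cl_{p'}(\cdot)$ are disjoint) gives
\[
n(\Aut(S), \Cl_p(S) \cup \Cl_{p'}(S)) \geq n(\Aut(T), \Cl_p(T) \cup \Cl_{p'}(T)),
\]
and the right-hand side exceeds $2\sqrt{p-1}$ by the inductive hypothesis (or by Lemma~\ref{PSL3} when $n - 1 = 3$), since $T = \PSL_{n-1}(q)$ with $n - 1 \geq 3$ is neither $\Al_5$ nor $\PSL_2(16)$.

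The more delicate case is $p \nmid |T|$, in which $p$ must be a primitive prime divisor of $q^n - 1$; in particular $p \equiv 1 \pmod n$, so $p \geq n + 1$ and $p \nmid (n, q-1)$, hence $p$ divides $m = (q^n - 1)/((q-1)(n, q-1))$. Lemma~\ref{primitiveprimedivisor} then yields
\[
n(\Aut(S), \Cl_p(S)) \geq \frac{p - 1}{2 f n}.
\]
Complementarily, $p \nmid |T|$ forces every element of $T$ to be $p$-regular, so Lemma~\ref{piPSL}, combined with the bound $k(T) \geq q^{n-2}/(n-1, q-1)$ coming from the proof of Lemma~\ref{lemma-defining-characteristic} and with $|\Out(T)| \leq 2(n-1, q-1) f$, produces
\[
n(\Aut(S), \Cl_{p'}(S)) \geq \frac{q^{n-2}}{2 (n-1, q-1)^2 f}.
\]
Summing the two estimates and using $p - 1 \leq q^n$ to compare with $2\sqrt{p-1}$ should give the required inequality for all $(n, q)$ outside a short explicit list of small pairs.

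The main obstacle will be closing that finite residual list (essentially $n \in \{4, 5\}$ with $q$ small), where the crude asymptotic estimate is tight. Here $\PSL_4(2) \cong \Al_8$ is already subsumed by Lemma~\ref{LemAlt}, and the remaining groups would be dispatched either by direct computation in \cite{Atl1} or \cite{GAP}, or by adjoining additional strongly self-centralizing tori via Lemma~\ref{lemma-strongly-self-centralizing} to sharpen the $p'$-regular count.
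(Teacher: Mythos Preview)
Your proposal follows essentially the same route as the paper: reduce via Lemma~\ref{piPSL} to the case where $p \nmid |\PSL_{n-1}(q)|$, combine the $p$-class estimate from Lemma~\ref{primitiveprimedivisor} with the $p'$-class estimate $n(\Aut(S),\Cl_{p'}(S)) \geq q^{n-2}/(2f(n-1,q-1)^2)$ coming from Lemma~\ref{piPSL} and the lower bound on $k(\PSL_{n-1}(q))$, and then clean up a finite residual list. Two minor remarks: the paper bounds the sum via $(p-1)/(2fn) + 2fn \geq 2\sqrt{p-1}$ (AM--GM) rather than invoking $p-1 \leq q^n$, which makes the reduction independent of any upper bound on $p$; and the residual list is a bit longer than you anticipate (it reaches $(n,q) = (6,2)$ and $(4,64)$), with the triple $(n,q,p) = (4,16,257)$ requiring an ad hoc count of classes of order $17\cdot 257$ rather than a straight GAP or torus computation.
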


\begin{proof}
Assume for a contradiction that part (i) fails for the group $S =
\PSL_{n}(q)$ with $n \geq 4$ minimal.

The prime $p$ divides $|\PSL_{n}(q)|$ but does not divide
$|\PSL_{n-1}(q)|$ by Lemma \ref{piPSL}. This implies that $p$
divides $$m = \frac{q^{n}-1}{(q-1)(n,q-1)}.$$ We get
$$n(\Aut(S),\Cl_{p}(S)) \geq \frac{p-1}{2 f n}$$ by Lemma
\ref{primitiveprimedivisor}. By Lemma \ref{piPSL} and
\cite[Corollary 3.7 (2)]{Fulman-Guralnick12} we also have
$$n(\Aut(S), \mathrm{Cl}_{p'}(S)) \geq \frac{k(\PSL_{n-1}(q))}{2
f(n-1,q-1)} \geq \frac{q^{n-2}}{2 f {(n-1,q-1)}^{2}}.$$ From these
it follows that
$$n(\Aut(S),\Cl_{p}(S)) + n(\Aut(S), \mathrm{Cl}_{p'}(S)) \geq$$
$$\geq \frac{p-1}{2 f n} + 2fn + \frac{q^{n-2}}{\delta f {(n-1,q-1)}^{2}} -
2fn \geq 2 \sqrt{p-1} + \frac{q^{n-2}}{2 f {(n-1,q-1)}^{2}} - 2fn.$$
We may thus assume that $q^{n-2} \leq 4 f^{2}n {(n-1,q-1)}^{2}$.

An easy calculation gives $n \leq 9$ and $q < 128$. Moreover, for $n
\leq 9$ and $q < 128$, a Gap \cite{GAP} computation yields that
$(n,q)$ must belong in $$\{ (4,2), (4,3), (4,4), (4,7), (4,8),
(4,16), (4,64), (5,2), (5,3), (5,4), (5,5), (5,9), (6,2) \}.$$

Since $p$ divides $m$ but $p \nmid |\PSL_{n-1}(q)|$, the triple
$(n,q,p)$ must be $(4,2,5)$, $(4,3,5)$, $(4,4,17)$, $(4,7,5)$,
$(4,8,5)$, $(5,8,13)$, $(4,16,257)$, $(4,64,17)$, $(4,64,241)$,
$(5,2,31)$, $(5,3,11)$, $(5,4,11)$, $(5,4,31)$, $(5,5,11)$,
$(5,5,71)$, $(5,9,11)$ or $(5,9,61)$. In all these cases, except
when $(n,q,p) = (4,16,257)$, we find that $n(\Aut(S),
\mathrm{Cl}_{p'}(S)) > 2 \sqrt{p-1}$, using Gap \cite{GAP}
calculations combined with Lemma \ref{piPSL} together with the bound
$n(\Aut(S), \mathrm{Cl}_{p'}(S)) \geq k(\PSL_{n-1}(q))/2f(n-1,q-1)$.

Let $(n,q,p) = (4,16,257)$. The number of conjugacy classes of $S =
\PSL_{4}(16)$ is $4368$ by \cite{GAP}. Observe that $m = 17 \cdot
257$ in this case. Every element $a$ in $S$ which is not $p$-regular
and not a $p$-element has order $m$. There are $\varphi(m)/2 = 2048$
possible conjugacy classes of such elements $a$ in $S$. It follows
that
$$n(\Aut(S),\Cl_{p}(S)) + n(\Aut(S), \mathrm{Cl}_{p'}(S))
\geq (4368 - 2048)/2f(n-1,q-1) > 2 \sqrt{p-1},$$ and the proof is
complete.
\end{proof}

\subsection{Unitary groups}\label{subsection-unitary} We continue to prove
Theorem~\ref{proposition-simple-groups}(i) for $S=\PSU_n(q)$.

\begin{lemma}
\label{PSU3} Theorem \ref{proposition-simple-groups} holds for $S =
\PSU_3(q)$.
\end{lemma}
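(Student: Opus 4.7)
The plan is to mirror the proof of Lemma~\ref{PSL3} closely, replacing the linear data by their unitary analogues. First I would dispose of small values of $q$ using the tables in \cite{Atl1} and direct \cite{GAP} computation; in particular the pair $(\PSU_3(16),241)$ appearing in Table~\ref{table-exceptions} has to be tracked as a potential exception for parts~(ii) and~(iii). For generic $q$ the key ingredients are: the uniform centralizer lower bound $|\bC_S(g)|\geq q^2/(3,q+1)$ for every $1\neq g\in S$ from \cite{Simpson73}; the strongly self-centralizing maximal torus $T\leq S$ of order $\Phi_6(q)/(3,q+1)=(q^2-q+1)/(3,q+1)$ with relative Weyl group of order $3$ recorded in \cite[p.~16]{Babai-Palfy-Saxl09}; and the order of the outer automorphism group, $|\Out(S)|=2f(3,q+1)$.

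From here I split into two cases according to whether $p$ divides $|T|$ or not, and feed the appropriate inequalities from Lemma~\ref{lemma-strongly-self-centralizing} into
\[
n(\Aut(S),\Cl_{p'}(S))\;\geq\;\frac{k_{p'}(S)}{|\Out(S)|}\;\geq\;\frac{|S_{p'}|\,(3,q+1)/q^2}{2f(3,q+1)}.
\]
When $p\mid|T|$, this yields $n(\Aut(S),\Cl_{p'}(S))>q^2/(3f(3,q+1)^2)=:R(q)$; since $p\leq\Phi_6(q)/(3,q+1)$ in this case, it is enough to check $R(q)\geq 2\sqrt{\Phi_6(q)/(3,q+1)-1}$, which fails only for finitely many $q$ that can be enumerated and examined individually. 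When $p\nmid|T|$, one has instead $p\leq q+1$ (using $p\geq 5$ to exclude the prime $3$ potentially arising when $(3,q+1)=3$), and the analogous bound $n(\Aut(S),\Cl_{p'}(S))>q^2(|T|-1)/(6f(3,q+1)^2|T|)=:R'(q)$ exceeds $2\sqrt{q}\geq 2\sqrt{p-1}$ outside a short list of small $q$, cleanable via \cite{GAP}.

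Finally, for part~(i) at the single exceptional pair $(\PSU_3(16),241)$, I note that $|T|=241$ is prime and the Weyl group acts freely on $T\setminus\{1\}$, so $S$ contains $(241-1)/3=80$ conjugacy classes of elements of order $241$; dividing by $|\Out(S)|=8$ yields at least $10$ further $\Aut(S)$-orbits on $\Cl_p(S)$, and combining with $n(\Aut(S),\Cl_{p'}(S))\geq 27$ from Table~\ref{table-exceptions} gives $n(\Aut(S),\Cl_p(S)\cup\Cl_{p'}(S))\geq 37>2\sqrt{240}$, confirming~(i). The main obstacle I anticipate is the bookkeeping in Case~2 for small $q$ where $R'(q)$ is not yet visibly larger than $2\sqrt{p-1}$; as in the linear case, one must either sharpen the centralizer estimate for those specific $(S,p)$ or rely on a direct computation in \cite{GAP}.
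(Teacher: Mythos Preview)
Your approach is essentially the same as the paper's, and it works. Two small points are worth noting. First, the centralizer lower bound you quote is slightly off: the minimum centralizer order in $S=\PSU_3(q)$ is $(q^2-q+1)/(3,q+1)$ (the order of the torus $T$ itself), not $q^2/(3,q+1)$; this only shifts constants and does not affect the argument. Second, the paper sharpens your estimate by bounding centralizers in $\Aut(S)$ directly via the subgroup $\PGU_3(q)$, obtaining $|\bC_{\Aut(S)}(g)|\geq q^2-q+1$ and hence $n(\Aut(S),\Cl_{p'}(S))>(q^2-q+1)/(3f(3,q+1))$, which saves one factor of $(3,q+1)$ compared to your $R(q)$ and shortens the list of small $q$ needing a direct check. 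For the exception $(\PSU_3(16),241)$ your count of $p$-classes is fine; the paper additionally carries out the detailed class analysis that justifies the entry $n(\Aut(S),\Cl_{p'}(S))\geq 27$ you are citing from Table~\ref{table-exceptions}.
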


\begin{proof}
We show that $n(\Aut(S),\Cl_{p'}(S)) > 2\sqrt{p-1}$ unless
$(S,p) = (\PSU_{3}(16),241)$. This exceptional case appears in Table
\ref{table-exceptions}.

We follow the same idea as in the proof of Lemma~\ref{PSL3}. The
case $q \leq 11$ can be checked directly using \cite{Atl1}. Thus
assume that $q \geq 13$.

%For every $g \in S$ we have $|\bC_S(g)|\geq (q^2-q+1)/(3,q+1)$ by
%\cite{Simpson73}. Furthermore, $S$ has a strongly self-centralizing
%maximal torus $T$ of order $$(q^2-q+1)/(3,q+1)=\Phi_6(q)/(3,q+1)$$
%with $|\bN_S(T)/T|=3$.
%
%Suppose first that $p\mid |T|$. Using Lemma
%\ref{lemma-strongly-self-centralizing}, we deduce that
%\[
%k_{p'}(S)>\frac{2(q^2-q+1)}{3(3,q+1)},
%\]
%and hence \[n(\Aut(S),\Cl_{p'}(S))>\frac{(q^2-q+1)}{3f(3,q+1)^2}.\]
%One can check that
%$(q^2-q+1)/(3f(3,q+1)^2)>2\sqrt{(q^2-q+1)/(3,q+1)-1}$ unless $q \in
%\{ 16, 17, 23, 29 \}$ (since we are assuming $q \geq 13$).
%
%For $q \in  \{ 17, 23 \}$ we have $p \leq 13$ and the desired bound
%$$(q^2-q+1)/(3f(3,q+1)^2) > 2\sqrt{p-1}$$ still holds. Let $q=29$.
%We have $p=271$, $n(\Aut(S),\Cl_{p'}(S)) \geq 31$, and, by
%Lemma~\ref{lemma-strongly-self-centralizing},
%$n(\Aut(S),\Cl_{p}(S))>(q^2-q+1)/72>1$ and hence we are done. For
%$q=16$ we have $p=241$, $n(\Aut(S),\Cl_{p'}(S))>20$, and
%$$n(\Aut(S),\Cl_{p}(S))> (q^2-q+1)(|T|-1)/(24|T|)=10,$$ which implies
%that $n(\Aut(S),\Cl_{p'}(S)\cup \Cl_{p}(S))>31>2\sqrt{p-1}$, as
%wanted.

For every $g \in G:=\PGU_3(q)$ we have $|\bC_G(g)|\geq q^2-q+1$ by
\cite{Simpson73}. Therefore the centralizer size in $\Aut(S)$ of an
element in $S$ is at least $q^2-q+1$, which in turn implies that
the size of every $\Aut(S)$-orbit on $S$ is at most
$|\Aut(S)|/(q^2-q+1)$.

On the other hand, $S$ has a strongly self-centralizing maximal
torus $T$ of order
$$(q^2-q+1)/(3,q+1)=\Phi_6(q)/(3,q+1)$$ with $|\bN_S(T)/T|=3$.
Suppose first that $p\mid |T|$. Using Lemma
\ref{lemma-strongly-self-centralizing}, we obtain $|S_{p'}|>2|S|/3$
and, together with the conclusion of the previous paragraph, we
deduce that
\[n(\Aut(S),\Cl_{p'}(S))>\frac{(q^2-q+1)}{3f(3,q+1)}.\]
One can check that
$(q^2-q+1)/(3f(3,q+1))>2\sqrt{(q^2-q+1)/(3,q+1)-1}$ unless $q =16$
(since we are assuming $q \geq 13$).

For $q=16$ we have $p=241$, $n(\Aut(S),\Cl_{p'}(S))>20$, and
$$n(\Aut(S),\Cl_{p}(S))> (q^2-q+1)(|T|-1)/(24|T|)=10,$$ which implies
that $n(\Aut(S),\Cl_{p'}(S)\cup \Cl_{p}(S))>31>2\sqrt{p-1}$, as
wanted. In fact, we are going to show that
$n(\Aut(S),\Cl_{p}(S))\geq 27$ for $(S,p)=(\PSU_3(16),241)$, as
appeared in Table~\ref{table-exceptions}.

The conjugacy classes and the character table of $\SU_3(q)$ as well as of
$\PSU_3(q)$ are available in \cite{Simpson73}. We observe that $S$
has three unipotent classes, each of which is invariant under
$\Aut(S)$. We will see that $S$ has at least 24 $\Aut(S)$-orbits on
semisimple elements of order coprime to $p=241$. First $S$ has 16
classes labeled by $C_4^{(k)}$ for $1\leq k\leq 16$ of elements of
order 17 and these classes produce at least 2 $\Aut(S)$-orbits. Next
there are 40 classes labeled by $C_6^{(k,l,m)}$ for $1\leq k,l,m\leq
17$, $k<l<m$, and $k+l+m \equiv 0 \pmod {17}$ of elements of order 17,
which produces at least $5$ other orbits. Finally there are 119
classes labeled by $C_7^{(k)}$ for $1\leq k< 15\cdot 17$ and
$k\not\equiv 0 \pmod {15}$ elements of orders dividing $15\cdot
17$. Among those 119 classes there is a single class of elements of
order 3 and hence that class constitutes a single orbit. The other
118 classes make up at least 16 orbits, with the notice that the
size of each orbit is a divisor of $|\Out(S)|=8$.

Now we suppose $p\nmid |T|$ and thus $p\leq q+1$. Again by Lemma
\ref{lemma-strongly-self-centralizing}, we have $|S_{p'}|
>|S|(|T|-1)/(3|T|)$. Hence
\[k_{p'}(S)>\frac{(q^2-q+1)(|T|-1)}{3(3,q+1)|T|}=\frac{|T|-1}{3},\] implying that
\[n(\Aut(S),\Cl_{p'}(S))>\frac{(|T|-1)}{6f(3,q+1)}=\frac{q^2-q+1-(3,q+1)}{6f(3,q+1)^2}:=R(q).\]
We have $R(q)\geq 2\sqrt{q}\geq 2\sqrt{p-1}$ unless $q \in \{ 16,
17, 23 \}$ (assuming that $q \geq 13$). For $q \in \{16, 17, 23 \}$
the bound $R(q)>2\sqrt{p-1}$ still holds.
\end{proof}

We proceed to prove part (i) for the groups $S = \PSU_{n}(q)$ with
$n \geq 4$. If $(n,q) = (4,2)$, then $p = 5$ since we are assuming
$p \geq 5$ and so $n(\Aut(S), \mathrm{Cl}_{p'}(S)) = 14 > 4$ by
\cite{GAP}. Assume from now on that $(n,q) \not= (4,2)$ (and $n \geq
4$). In this case $\PSU_{n-1}(q)$ is a simple group.

\begin{lemma}
\label{primitiveprimedivisorPSU} In order to prove Theorem
\ref{proposition-simple-groups}(i) for $S = \PSU_{n}(q)$ with $n
\geq 4$, we may assume that
$$n(\Aut(S), \mathrm{Cl}_{p'}(S)) \geq \frac{q^{n-2}}{2f {(n-1,q+1)}^{2}}$$
and that the prime $p$ divides $q^{n}- {(-1)}^{n}$. Moreover, if $n$
is odd then $p$ is a primitive prime divisor of $q^{2n}-1$.
\end{lemma}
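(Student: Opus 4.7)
The plan is to take a minimal counterexample $S = \PSU_n(q)$ with $n \geq 4$ and reduce to the smaller simple group $T := \PSU_{n-1}(q)$ via Lemma~\ref{piPSL}. Since we have already excluded $(n,q) = (4,2)$, $T$ is indeed nonabelian simple, so the induction machinery applies.

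First I will distinguish two cases according to whether $p$ divides $|T|$. If $p \mid |T|$, then by the minimal choice of $S$, Theorem~\ref{proposition-simple-groups}(i) already holds for $T$; applying Lemma~\ref{piPSL} to both $\Cl_p$ and $\Cl_{p'}$ then yields
\[
n(\Aut(S),\Cl_{p'}(S)\cup \Cl_p(S)) \geq n(\Aut(T),\Cl_{p'}(T) \cup \Cl_p(T)) > 2\sqrt{p-1},
\]
contradicting the assumption that $S$ is a counterexample. Hence I may assume $p \nmid |T|$, so $k_{p'}(T) = k(T)$. Combining the Fulman--Guralnick bound $k(T) \geq q^{n-2}/(n-1,q+1)$ from \cite[Corollary 3.7]{Fulman-Guralnick12} with $|\Out(T)| \leq 2f(n-1,q+1)$ and Lemma~\ref{piPSL} will give the first claimed inequality
\[
n(\Aut(S),\Cl_{p'}(S)) \geq \frac{k(T)}{|\Out(T)|} \geq \frac{q^{n-2}}{2f(n-1,q+1)^2}.
\]

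For the divisibility statement I will directly compute
\[
\frac{|\PSU_n(q)|}{|\PSU_{n-1}(q)|} = \frac{(n-1,q+1)}{(n,q+1)}\, q^{n-1}(q^n - (-1)^n).
\]
Since $p$ divides this integer and $p \neq \ell$ does not divide $q$, either $p \mid q^n - (-1)^n$ directly, or $p \mid (n-1,q+1)$, in which case $p \mid q+1$; the latter forces $q \equiv -1 \pmod p$ and hence $q^n \equiv (-1)^n$, so $p \mid q^n - (-1)^n$ in this subcase as well.

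Finally, assume $n$ is odd, so $p \mid q^n + 1$ and $p \mid q^{2n}-1$. Setting $e := \ord_p(q)$, we have $e \mid 2n$ but $e \nmid n$, so $e = 2d$ with $d$ an odd divisor of $n$. Supposing $d < n$ for contradiction, the bound $d \leq n/3$ (since every prime factor of the odd number $n$ is at least $3$) gives $d \leq n-2$. If $d \geq 3$, then the factor $q^d + 1$ appears in $|\SU_{n-1}(q)|$ and $p \nmid q+1$ (because $e \geq 6$), so $p \mid |\PSU_{n-1}(q)|$, the desired contradiction. The subcase $d = 1$ is where I expect the main technical obstacle: here $p \mid q+1$, and one must rule out that the $p$-part of $|\SU_{n-1}(q)|$ is fully absorbed by the denominator $(n-1,q+1)$ upon passing to $\PSU_{n-1}(q)$. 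My plan to handle it is an LTE computation yielding
\[
v_p(|\SU_{n-1}(q)|) = (n-2)v_p(q+1) + v_p((n-1)!),
\]
which strictly exceeds $v_p((n-1,q+1)) \leq v_p(q+1)$ whenever $n \geq 5$, again forcing $p \mid |\PSU_{n-1}(q)|$ and completing the contradiction, so that $e = 2n$ as required.
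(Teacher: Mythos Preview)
Your argument is correct. The reduction via Lemma~\ref{piPSL} and the Fulman--Guralnick bound matches the paper's proof exactly (though the correct citation for unitary groups is \cite[Corollary~3.11\,(2)]{Fulman-Guralnick12}, not Corollary~3.7, which is the linear case). Your derivation of $p\mid q^{n}-(-1)^{n}$ is also equivalent to the paper's, with one cosmetic caveat: the ratio $\frac{(n-1,q+1)}{(n,q+1)}\,q^{n-1}(q^{n}-(-1)^{n})$ need not be an integer, so ``$p$ divides this integer'' should be phrased $p$-adically; the conclusion is unaffected.

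Where you genuinely diverge from the paper is in the primitive prime divisor step for odd $n$. The paper argues directly from the constraints ``$p\nmid q^{r}+1$ for odd $r<n$'' and ``$p\nmid q^{r}-1$ for even $r<n$'' (both read off from the factorisation of $|\PSU_{n-1}(q)|$), then eliminates a hypothetical $k<2n$ with $p\mid q^{k}-1$ by forming sums like $(q^{n}+1)+(q^{k}-1)$ and $(q^{k}-1)+(q^{n-k}+1)$ to force $p$ to divide some forbidden $q^{r}\pm1$. You instead compute the multiplicative order $e=\ord_{p}(q)$, show $e=2d$ with $d$ odd dividing $n$, and rule out $d<n$: for $d\ge 3$ the factor $q^{d}+1$ sits visibly in $|\SU_{n-1}(q)|$ with $p\nmid q+1$, while for $d=1$ you invoke LTE to get $v_{p}(|\SU_{n-1}(q)|)=(n-2)v_{p}(q+1)+v_{p}((n-1)!)$, which strictly dominates $v_{p}((n-1,q+1))$. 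Both routes are short; yours is more structural and makes the role of the order of $q$ modulo $p$ explicit, whereas the paper's manipulation avoids any appeal to LTE and stays entirely elementary. Either is perfectly acceptable here.
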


\begin{proof}
The prime $p$ divides $|S|$ by assumption and $p \geq 5$. Since
$(n,q) \not= (4,2)$, we may assume by Lemma \ref{piPSL} that $p$
does not divide $|\PSU_{n-1}(q)|$. This has two implications.
Firstly,
$$n(\Aut(S), \mathrm{Cl}_{p'}(S)) \geq \frac{k(\PSU_{n-1}(q))}{2f (n-1,q+1)}
\geq \frac{q^{n-2}}{2f {(n-1,q+1)}^{2}}$$ by Lemma \ref{piPSL} and
\cite[Corollary 3.11 (2)]{Fulman-Guralnick12} and secondly $p \mid
q^{n} - {(-1)}^{n}$.

Let $n \geq 5$ be odd. We claim that $p$ is a primitive prime
divisor of $q^{2n}-1$. Assume for a contradiction that $p \mid
q^{k}-1$ for some integer $k$ with $1 \leq k < 2n$. Since $p$ does
not divide $|\PSU_{n-1}(q)|$, it cannot divide $q^{r}+1$ for $r<n$
odd and it cannot divide $q^{r}-1$ for $r<n$ even.

Assume that $k \leq n$. By the previous paragraph, $k$ must be odd.
Since $p$ divides $(q^{n}+1) + (q^{k}-1)$, the prime $p$ must divide
$q^{n-k}+1$. Thus $k < n$. Since $p$ divides $(q^{k}-1) +
(q^{n-k}+1)$, it must divide $q^{|n-2k|}+1$. This is a contradiction
since $|n-2k|$ is odd. It follows that $n < k$. Since $p$ divides
$(q^{2n}-1) - (q^{k}-1)$, it must divide $q^{2n-k}-1$. This is a
contradiction since $2n-k < n$.
\end{proof}

\begin{lemma}
\label{PSUneven} Theorem \ref{proposition-simple-groups}(i) holds
for $S = \PSU_n(q)$ with $n \geq 4$ even.
\end{lemma}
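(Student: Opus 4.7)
The plan is to mirror the argument used for $\PSL_n(q)$ with $n \geq 4$. By Lemma~\ref{primitiveprimedivisorPSU}, since $n$ is even we may assume $p \mid q^n - 1$ together with the bound
$$n(\Aut(S), \mathrm{Cl}_{p'}(S)) \geq \frac{q^{n-2}}{2f(n-1,q+1)^2}.$$

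The first step is to establish a unitary analog of Lemma~\ref{primitiveprimedivisor}, namely
$$n(\Aut(S), \mathrm{Cl}_{p}(S)) \geq \frac{p-1}{2fn}.$$
For this I would exhibit a cyclic subgroup $\langle c \rangle$ of $\PSU_n(q)$ of order $m = (q^n-1)/((q+1)(n,q+1))$, coming from the twisted-Coxeter (type $(n)$) maximal torus of $\GU_n(q)$ of order $q^n-1$ after passing to $\SU_n(q)$ and factoring out the center. Since $p \mid q^n-1$ and $p \nmid |\PSU_{n-1}(q)|$, we have in particular $p \nmid (q+1)(n,q+1)$, so $p \mid m$. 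The relative Weyl group of this torus is cyclic of order $n$ (the stabilizer of an $n$-cycle in $S_n$), and adjoining the $2f$ field-graph automorphisms of $S$ gives $|\bN_{\Aut(S)}(\langle c \rangle)/\bC_{\Aut(S)}(\langle c \rangle)| \leq 2fn$. Counting $\Aut(S)$-classes of elements whose order equals the $p$-part of $m$ then produces at least $(p-1)/(2fn)$ orbits on $\mathrm{Cl}_p(S)$, just as in Lemma~\ref{primitiveprimedivisor}.

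Summing the two bounds and invoking the AM-GM inequality $(p-1)/(2fn) + 2fn \geq 2\sqrt{p-1}$ yields
$$n(\Aut(S), \mathrm{Cl}_{p}(S)) + n(\Aut(S), \mathrm{Cl}_{p'}(S)) \geq 2\sqrt{p-1} + \frac{q^{n-2}}{2f(n-1,q+1)^2} - 2fn.$$
This reduces the problem to the case $q^{n-2} \leq 4f^2 n (n-1,q+1)^2$, which restricts $(n,q)$ to a short finite list, essentially the unitary analog of the list appearing in the linear case.

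The main obstacle will be the endgame: for each remaining pair $(n,q)$ I would list the admissible primes $p \geq 5$ dividing $q^n-1$ but not $|\PSU_{n-1}(q)|$, and verify in each resulting triple $(n,q,p)$ that the combined bound exceeds $2\sqrt{p-1}$. I expect most cases to yield to a direct \cite{GAP} computation combined with Lemma~\ref{piPSL} and \cite[Corollary~3.11(2)]{Fulman-Guralnick12}. Any residual large-$p$ cases (analogous to $(n,q,p) = (4,16,257)$ in the linear situation) should be handled by replacing the crude estimate $(p-1)/(2fn)$ with the sharper $\varphi(m)/(2fn)$ and tallying the $\Aut(S)$-orbits on the classes of elements whose order is the corresponding divisor of $q^n-1$.
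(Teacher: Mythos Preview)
Your approach is correct but takes a different route from the paper. You mirror the $\PSL_n$ argument by producing a companion lower bound $n(\Aut(S),\Cl_p(S)) \geq (p-1)/(2fn)$ via the cyclic maximal torus of $\GU_n(q)$ of order $q^n-1$, then combine it with the $\Cl_{p'}$ bound from Lemma~\ref{primitiveprimedivisorPSU} and apply AM--GM. The paper instead observes that for $n$ even the hypotheses $p \mid q^n-1$ and $p \nmid |\PSU_{n-1}(q)|$ already force $p$ to divide one of $q^{n/2}\pm 1$, hence $p-1 \leq q^{n/2}$; with this bound the $\Cl_{p'}$ estimate alone yields $n(\Aut(S),\Cl_{p'}(S)) \geq q^{n-2}/(2f(n-1,q+1)^2) > 2q^{n/4} \geq 2\sqrt{p-1}$ outside a finite list, so no unitary analogue of Lemma~\ref{primitiveprimedivisor} is ever needed. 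Your route trades this simple arithmetic observation for a second torus count; it buys a tighter reduction inequality $q^{n-2} \leq 4f^2 n(n-1,q+1)^2$ (hence fewer exceptional pairs $(n,q)$ to check), at the cost of establishing the extra bound.

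One caution on your sketch of $(p-1)/(2fn)$: do not rely on $p$-elements acting irreducibly on the natural $\FF_{q^2}$-module, since here $p$ is only a primitive divisor of $q^n-1$, not of $q^{2n}-1$, and such elements can decompose the module (e.g.\ $p=17$, $q=4$, $n=4$). The clean justification is that fusion of semisimple elements of the type-$(n)$ torus $T$ in $\GU_n(q)$ is controlled by its relative Weyl group $N_{\GU_n(q)}(T)/T \cong C_n$; since $p\nmid(q+1)(n,q+1)$ the $p-1$ elements of order $p$ in $T$ lie in $\SU_n(q)$ and map injectively to $S$, giving at least $(p-1)/n$ distinct $\PGU_n(q)$-orbits, and dividing by $|\Aut(S)/\PGU_n(q)|=2f$ finishes.
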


\begin{proof}
Let $n$ be even. In this case $p$ divides $q^{n}-1$ and thus $p-1
\leq q^{n/2}$ by Lemma \ref{primitiveprimedivisorPSU}. Again by
Lemma \ref{primitiveprimedivisorPSU} we are finished if $q^{n-2}/(2
f {(n-1,q+1)}^{2}) > 2 q^{n/4}$. We may assume that $q^{(3n/4)-2}
\leq 4 f {(n-1,q+1)}^{2}$. It follows that $(n,q)$ belongs to the
set $$\{ (4,3), (4,4), (4,5), (4,8), (4,11), (4,16), (4,17), (4,23),
(4,29), (4,32), (4,128), (6,4) \}.$$ Taking into account that $p \geq
5$ divides $q^{n}-1$ but $p$ does not divide $|\PSU_{n-1}(q)|$ by
Lemma \ref{piPSL}, the triple $(n,q,p)$ must be $(4,3,5)$,
$(4,4,17)$, $(4,5,13)$, $(4,8,5)$, $(4,8,13)$, $(4,11,61)$,
$(4,16,257)$, $(4,17,5)$, $(4,17,29)$, $(4,23,5)$, $(4,23,53)$,
$(4,29,421)$, $(4,32,5)$, $(4,32,41)$, $(4,128,5)$, $(4,128,29)$,
$(4,128,113)$, $(4,128,127)$ or $(6,4,7)$. We get $$\frac{q^{n-2}}{2 f
{(n-1,q+1)}^{2}} > 2 \sqrt{p-1}$$ unless $(n,q,p)$ is equal to
$(4,4,17)$, $(4,5,13)$, $(4,8,5)$, $(4,8,13)$, $(4,11,61)$,
$(4,16,257)$, $(4,32,41)$ or $(6,4,7)$. A Gap \cite{GAP} computation
using Lemma \ref{piPSL} gives $$n(\Aut(S), \mathrm{Cl}_{p'}(S)) > 2
\sqrt{p-1}$$ unless $(n,q,p) = (4,32,41)$. If $(n,q,p) = (4,32,41)$,
then the bound still holds since $331$ is a prime factor of
$|\PSU_{3}(q)|$, unlike $p = 41$, and $$n(\Aut(S),
\mathrm{Cl}_{p'}(S)) \geq n(\Aut(\PSU_{3}(q)),
\mathrm{Cl}_{p'}(\PSU_{3}(q))) \geq$$ $$\geq n(\Aut(\PSU_{3}(q)),
\mathrm{Cl}_{331'}(\PSU_{3}(q))) > 2 \sqrt{331-1} > 2 \sqrt{p-1}$$
by Lemma \ref{piPSL} and the proof of Lemma \ref{PSU3}.
\end{proof}

\begin{lemma}
\label{primitiveprimedivisorunitary} Let $n \geq 5$ be odd. Let $p
\geq 5$ be a prime which divides $q^{n}+1$ and which is a primitive
prime divisor of $q^{2n}-1$. Let $S = \PSU_{n}(q)$. The number of
orbits of $\Aut(S)$ on the set of elements of $S$ of orders
divisible by $p$ but not equal to $p$ is $$\frac{
\frac{q^{n}+1}{(q+1)(n,q+1)} - p }{2fn}.$$
\end{lemma}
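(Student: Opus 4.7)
My plan is to identify the unique maximal torus of $S$ that carries the $p$-singular elements, analyze the action of $\Aut(S)$ on it, and then count.

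First I would exploit the primitive prime divisor hypothesis. Because $p$ is a primitive prime divisor of $q^{2n}-1$, the only $F$-stable maximal torus of the simply connected group $\SU_n$ whose $F$-fixed points have order divisible by $p$ is, up to conjugacy, the Coxeter torus $\widetilde T\leq \SU_n(q)$ of order $(q^n+1)/(q+1)$; no proper $F$-stable Levi subgroup can absorb such a $p$. Let $T$ denote the image of $\widetilde T$ in $S=\PSU_n(q)$, a cyclic group of order $m=(q^n+1)/((q+1)(n,q+1))$, and note that $p\mid m$ (because $p\geq 5$ and primitivity force $p\nmid (q+1)(n,q+1)$). Any element of $S$ of order divisible by $p$ is $S$-conjugate into $T$ and has centralizer in $S$ equal to $T$, again by primitivity.

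Second I would compute the outer action on $T$. The relative Weyl group $N_S(T)/T$ is cyclic of order $2n$, acting on $T$ by the $(-q)$-power map. The field automorphism group of $S$ is cyclic of order $f$ (generated by $x\mapsto x^\ell$ where $q=\ell^f$) and normalizes $T$, acting on it as $\ell$-power maps. The diagonal automorphisms coming from $\PGU_n(q)/S$ contribute no extra fusion on classes of elements of order divisible by $p$: such an element has the same centralizer $T$ in $\PGU_n(q)$ as in $S$, so $S$-conjugacy and $\PGU_n(q)$-conjugacy coincide on these elements. Consequently the image of $\Aut(S)$ inside $\Aut(T)=(\ZZ/m\ZZ)^\times$ is the cyclic group $H=\langle -q,\ell\rangle$ of order $2fn$.

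Third I would count. The $\Aut(S)$-orbits on elements of $S$ of order divisible by $p$ and not equal to $p$ correspond bijectively to $H$-orbits on the subset $X\subseteq T$ of such elements. Primitivity of $p$ gives that no non-identity element of $H$ fixes any element of $X$ (a fixed point would have order surviving a non-trivial reduction in $(\ZZ/m\ZZ)^\times$, contradicting that its order is divisible by $p$ and the image of $H$ in $(\ZZ/p\ZZ)^\times$ is cyclic of the expected order). Hence $H$ acts freely on $X$ and the number of orbits is $|X|/(2fn)$. A direct count using the cyclic structure of $T$, together with the observation that under the hypotheses the non-identity elements of $T$ of order distinct from $p$ are precisely the elements of $X$, gives $|X|=m-1-(p-1)=m-p$, which yields the claimed formula.

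The main obstacle will be the last verification, namely that every non-identity element of $T$ whose order is not $p$ has order divisible by $p$, equivalently that the relevant divisors of $m$ other than $1$ and $p$ are all multiples of $p$; this requires a careful analysis of the prime factorization of $m=(q^n+1)/((q+1)(n,q+1))$ in conjunction with the primitive prime divisor hypothesis. Alongside it, one must verify the orbit-freeness and the identification of the full $\Aut(S)$-image with $H$, both of which use the same primitivity input but in distinct ways.
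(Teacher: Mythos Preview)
Your approach is essentially the paper's: locate all $p$-singular elements inside a single $\Aut(S)$-conjugacy class of cyclic subgroups $T\cong C_m$ with $m=(q^n+1)/((q+1)(n,q+1))$, show $|N_{\Aut(S)}(T)/T|=2fn$, verify the induced action on the $p$-singular part of $T$ is free (you make this more explicit than the paper does, via the injection $H\hookrightarrow(\ZZ/p\ZZ)^\times$), and divide. The paper simply writes ``the lemma follows'' after establishing $|N_{\Aut(S)}(T)/T|=2fn$; your write-up is in fact more careful.

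However, the ``main obstacle'' you flag is not an obstacle to be overcome but a genuine obstruction: the claim that every non-identity element of $T$ of order $\neq p$ has order divisible by $p$ is \emph{false}. Take $(n,q,p)=(9,2,19)$: here $p=19$ is a primitive prime divisor of $2^{18}-1$, and $m=(2^9+1)/(3\cdot 3)=57=3\cdot 19$, so $T$ has two elements of order $3$. The set $X$ of elements of $T$ of order divisible by $p$ but $\neq p$ then has size $36$, not $m-p=38$, and the stated formula $(m-p)/(2fn)=38/18$ is not even an integer. So the lemma as stated (with ``is'') cannot be proved, by you or by the paper.

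What \emph{is} true, and what the application in the next lemma actually needs, is the inequality: the number of $\Aut(S)$-orbits on such elements is at most $(m-p)/(2fn)$. This follows immediately from your argument, since freeness gives exactly $|X|/(2fn)$ orbits and $|X|\leq m-p$ because the identity and the $p-1$ elements of order $p$ are excluded from $X$. Replace your final paragraph by this observation and you have a correct proof of the corrected statement; there is nothing further to verify about the prime factorisation of $m$.
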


\begin{proof}
Let $g \in \GU_{n}(q)$ be an element of order divisible by $p$.
Since $\GU_{n}(q) \leq \GL_{n}(q^{2})$ and $p$ is a primitive prime
divisor of $q^{2n}-1$, we see that $g$ acts irreducibly on the
underlying vector space of dimension $n$ over the field of size
$q^{2}$. It is contained in a Singer cycle $C$ of $\GU_{n}(q)$
defined to be a cyclic irreducible subgroup of $\GU_{n}(q)$ of
maximal possible order and whose existence is proved by Huppert in
\cite{Huppert1}. The group $C$ is the intersection of $\GU_{n}(q)$
with the Singer cycle of $\GL_{n}(q^{2})$ (which is a cyclic
subgroup) containing $g$. Since the centralizer of $g$ in
$\GL_{n}(q^{2})$ is the Singer cycle containing $g$, it follows that
the centralizer of $g$ in $\GU_{n}(q)$ is $C$. The group $C$ has
order $q^{n}+1$ by \cite[Satz 4]{Huppert1}. Since $p$ is a primitive
prime divisor of $q^{2n}-1$, $C$ contains a Sylow $p$-subgroup $P$
of $\GU_{n}(q)$. The centralizer in $\GU_{n}(q)$ of any non-trivial
element of $P$ is $C$. It follows that all Singer cycles in
$\GU_{n}(q)$ are conjugate and also that the centralizer of $g$ in
$\GU_{n}(q)$ is $C$. The group $\bN_{\GL_{n}(q^{2})}(\langle g
\rangle)/\bC_{\GL_{n}(q^{2})}(\langle g \rangle)$ is cyclic of order
$n$, so $\bN_{\GU_{n}(q)}(\langle g \rangle) = C.m$ for some divisor
$m$ of $n$. Since $g$ is contained in an extension field subgroup
$\GU_{1}(q^{n}).n$ of $\GU_{n}(q)$, we obtain $m = n$.

The image $F$ of $C \cap \SU_{n}(q)$ in $S$ has order
$(q^{n}+1)/((q+1)(n,q+1))$. Cyclic subgroups of this order are all
conjugate in $\Aut(S)$ by the previous paragraph. Every element of
$S$ of order divisible by $p$ is contained in some conjugate of $F$
in $\Aut(S)$. Observe that $|\bN_{\Aut(S)}(F)/F| = 2fn$. The lemma
follows.
\end{proof}

We are now in position to complete the proof for the unitary case.

\begin{lemma}
\label{PSUnodd} Theorem \ref{proposition-simple-groups}(i) holds for
$S = \PSU_n(q)$ with $n \geq 5$ odd.
\end{lemma}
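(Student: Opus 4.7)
The plan is to mirror the strategy used above for $S=\PSL_n(q)$ with $n\geq 4$, with the Singer cycle analysis of Lemma~\ref{primitiveprimedivisorunitary} playing the role that Lemma~\ref{primitiveprimedivisor} played in the linear case. Assume for contradiction that $S=\PSU_n(q)$ with $n\geq 5$ odd is a counterexample to part (i) with $n$ minimal. By Lemma~\ref{primitiveprimedivisorPSU} we may assume that $p$ is a primitive prime divisor of $q^{2n}-1$ dividing $q^n+1$ and that
\[
n(\Aut(S),\mathrm{Cl}_{p'}(S))\geq \frac{q^{n-2}}{2f(n-1,q+1)^2}.
\]

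Next I would bound $n(\Aut(S),\mathrm{Cl}_{p}(S))$ from below using the Singer-cycle structure appearing in the proof of Lemma~\ref{primitiveprimedivisorunitary}. Since $p$ is a primitive prime divisor of $q^{2n}-1$, every $p$-element of $\GU_n(q)$ lies in a conjugate of the Singer cycle $C$ of order $q^n+1$, and the image $F\leq S$ of $C\cap \SU_n(q)$ has order $(q^n+1)/((q+1)(n,q+1))$ with $|\bN_{\Aut(S)}(F)/F|=2fn$, all Singer images being $\Aut(S)$-conjugate. Since $p\mid |F|$, the $p-1$ elements of order $p$ in $F$ fall into at least $(p-1)/(2fn)$ $\Aut(S)$-orbits, whence
\[
n(\Aut(S),\mathrm{Cl}_p(S))\geq \frac{p-1}{2fn}.
\]
Summing the two lower bounds and applying $(p-1)/(2fn)+2fn\geq 2\sqrt{p-1}$, we are done as soon as
\[
\frac{q^{n-2}}{2f(n-1,q+1)^2}\geq 2fn,\qquad \text{equivalently}\qquad q^{n-2}\geq 4f^2n(n-1,q+1)^2.
\]

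This inequality fails only for a short list of pairs $(n,q)$, which I would enumerate as in the linear proof. The additional constraints that $p\geq 5$ divides $q^n+1$ but not $|\PSU_{n-1}(q)|$ (which is simple since $n-1\geq 4$ is even) cut the possible triples $(n,q,p)$ to finitely many, and each remaining case should be dispatched either by a direct \cite{GAP} computation combined with Lemma~\ref{piPSL} (to descend to $\PSU_{n-1}(q)$), or by applying Lemma~\ref{primitiveprimedivisorunitary} to harvest further $\Aut(S)$-orbits consisting of elements of order divisible by $p$ but unequal to $p$.

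The main obstacle I anticipate is the unitary analog of the $(\PSL_4(16),257)$ situation: when $(q^n+1)/((q+1)(n,q+1))$ factors into two large primes, one of which is $p$, the term $(p-1)/(2fn)$ is barely enough and one must quantify the mixed-order contribution from Lemma~\ref{primitiveprimedivisorunitary} to exceed $2\sqrt{p-1}$. Conversely, when $(q^n+1)/((q+1)(n,q+1))=p$ exactly (so no mixed-order contribution exists), the combined bound from $\PSU_{n-1}(q)$ via Lemma~\ref{piPSL} together with $(p-1)/(2fn)$ should still win; verifying this cleanly in the smallest surviving $(n,q,p)$ triples is the delicate part of the argument.
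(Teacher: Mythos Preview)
Your plan is correct and will work, but it is \emph{not} the route the paper takes for odd $n$. You mirror the linear $n\geq 4$ argument: bound $n(\Aut(S),\Cl_{p'}(S))$ via Lemma~\ref{piPSL} and the $\PSU_{n-1}(q)$ class count (giving a term of size roughly $q^{n-2}$), bound $n(\Aut(S),\Cl_p(S))\geq (p-1)/(2fn)$ from the Singer cycle, and add. The paper instead uses Lemma~\ref{primitiveprimedivisorunitary} to compute the number of $\Aut(S)$-orbits on \emph{mixed} classes exactly and subtracts this from a lower bound $k(S)/(2f(n,q+1))$ on the total number of orbits, obtaining
\[
n(\Aut(S),\Cl_{p'}(S))+n(\Aut(S),\Cl_p(S))\;\geq\;\frac{k(S)}{2f(n,q+1)}-\frac{q^n+1}{2fn(q+1)(n,q+1)}+\frac{p}{2fn},
\]
and then invokes $k(S)\geq q^{n-1}/(n,q+1)$. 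The point is that the paper's ``total minus mixed'' bound is governed by $q^{n-1}$ rather than your $q^{n-2}$, so its reduction inequality $q^{n-1}\geq (q^n+1)(n,q+1)/(n(q+1))+4f^2n(n,q+1)^2$ fails only for $(n,q)\in\{(5,2),(5,4),(5,9),(9,2)\}$, whereas your condition $q^{n-2}\geq 4f^2n(n-1,q+1)^2$ leaves a longer list including, for instance, $(5,3)$ and $(7,2)$.

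What each approach buys: your route is conceptually cleaner (it is literally the linear proof with $q-1$ replaced by $q+1$) and avoids having to state and prove the exact orbit count of Lemma~\ref{primitiveprimedivisorunitary}; the paper's route trades that extra lemma for a shorter list of residual triples. Both finish the residual cases the same way---Lemma~\ref{piPSL} plus \cite{GAP} or a direct element-order count in $\PSU_{n-1}(q)$---and the checks go through in every case (e.g.\ for $(5,3,61)$ one gets at least $10$ orbits from the element orders of $\PSU_4(3)$ and $6$ more from $(p-1)/(2fn)$, exceeding $2\sqrt{60}$). So your plan is sound; just be aware that you will have a few more small cases to dispatch than the paper does.
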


\begin{proof}
Let $n \geq 5$ be odd. We may assume that $p$ is a primitive prime
divisor of $q^{2n}-1$ by Lemma \ref{primitiveprimedivisorPSU}. Thus
$$n(\Aut(S), \mathrm{Cl}_{p'}(S)) + n(\Aut(S), \mathrm{Cl}_{p}(S))
\geq \frac{k(S)}{2f (n,q+1)} - \frac{q^{n}+1}{2fn(q+1)(n,q+1)}+
\frac{p}{2fn}$$ by Lemma \ref{primitiveprimedivisorunitary}. This is
at least
$$\frac{p}{2fn} + 2fn - 2fn + \frac{q^{n-1}}{2f {(n,q+1)}^{2}} -
\frac{q^{n}+1}{2fn(q+1)(n,q+1)} \geq$$ $$\geq 2\sqrt{p} +
\frac{q^{n-1}}{2f {(n,q+1)}^{2}} - \frac{q^{n}+1}{2fn(q+1)(n,q+1)} -
2fn$$ by \cite[Corollary 3.11 (2)]{Fulman-Guralnick12}. This latter
expression is at least $2 \sqrt{p}$ if and only if $$q^{n-1} \geq
\frac{(q^{n}+1)(n,q+1)}{n(q+1)} + 4f^{2}n {(n,q+1)}^{2}.$$ This is
satisfied unless $(n,q) \in \{ (5,2), (5,4), (5,9), (9,2) \}$.
Taking into account that the prime $p \geq 5$ divides $q^{n}+1$, the
triple $(n,q,p)$ must belong to $$\{ (5,2,11), (5,4,5), (5,4,41),
(5,9,5), (5,9,1181), (9,2,19) \}.$$ Among these exceptions, we have
$$\frac{q^{n-1}}{2f {(n,q+1)}^{2}} - \frac{q^{n}+1}{2fn(q+1)(n,q+1)} >
2 \sqrt{p-1}$$ unless $(n,q,p) \in \{ (5,4,5), (5,4,41),
(5,9,1181)\}$. If $(n,q,p) \in \{ (5,4,5), (5,4,41) \}$, then
$$n(\Aut(S), \mathrm{Cl}_{p'}(S)) \geq n(\Aut(\PSU_{4}(q)),
\mathrm{Cl}_{p'}(\PSU_{4}(q))) > 2 \sqrt{p-1}$$ by Lemma \ref{piPSL}
and \cite{GAP}. Let $(n,q,p) = (5,9,1181)$. The precise number of
conjugacy classes of $S$ can be computed using \cite{Macdonald}.
This is $k(S) = 7596$. Plugging this into the first displayed
expression of the present proof, we obtain $$n(\Aut(S),
\mathrm{Cl}_{1181'}(S)) + n(\Aut(S), \mathrm{Cl}_{1181}(S)) > 2
\sqrt{1180},$$ and this finishes the proof.
\end{proof}

%%%%%%%%%%%%%%%%%%%%%%%%%%%%%%%%%%%%%%%%%%%%%%%%%%%%%%%%%%%%%%%%%%%%

\subsection{Theorem \ref{proposition-simple-groups}(ii) and (iii): Linear and
unitary groups of dimension at least 4}

The method in Subsections
\ref{subsection-linear} and \ref{subsection-unitary} can be revised
to prove parts (ii) and (iii) for $S=\PSL_n(q)$ and $\PSU_n(q)$, but
we present here another path to do it. As the case $n\leq 3$ has
been proved in Lemmas~\ref{PSL2}, \ref{PSL3}, and \ref{PSU3}, we
will assume that $n\geq 4$ in this subsection.

We use $\PSL^+_n(q)$ for the linear groups and $\PSL^-_n(q)$ for the
unitary groups.

\begin{lemma}\label{lemma-linear-unitary-bound}
Let $S=\PSL^\epsilon_n(q)$ for $n\geq 4$ and $p$ a prime divisor of
$|S|$ but $p\nmid q$. Assume that $p\mid (q^n-(\epsilon1)^n)$ but
$p\nmid (q^i-(\epsilon 1)^i)$ for every $1\leq i\leq n-1$. Then

\[n(\Aut(S),\Cl_{p'}(S))
> \frac{q^{n-1}(n-1)}{2nf(n,q-\epsilon1)} H(n,q,\epsilon),\]
where \[H(n,q,+)=\frac{1}{er}\] with $r:=\min\{x \in\NN: x\geq
\log_q(n+1)\}$ and
\[H(n,q,-)=\left(\frac{q^2-1}{er'(q+1)^2}\right)^{1/2}\] with
$r':=\min\{x\in \NN: x \text{ odd and } x\geq \log_q(n+1)\}$.
\end{lemma}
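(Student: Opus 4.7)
My plan is to exhibit many $p$-regular semisimple classes of $S$ by parameterizing them through their characteristic polynomials, and then pass to $\Aut(S)$-orbits on $\PSL^\epsilon_n(q)$-classes. The hypothesis on $p$ provides the crucial $p$-regularity criterion: if a semisimple element of $\SL^\epsilon_n(q)$ has characteristic polynomial whose every irreducible factor has degree strictly less than $n$, then its order divides $\lcm_{i<n}(q^i - (\epsilon 1)^i)$, which is coprime to $p$ by assumption. Hence every such element is $p$-regular, and the task reduces to counting classes with characteristic polynomial of this shape.

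Within this set, I would focus on characteristic polynomials of the form $f_0 \cdot f_1$, where $f_0$ is a monic irreducible polynomial of degree $r$ and $f_1$ is a monic polynomial of complementary degree $n-r$, with constant term forced by the $\det = 1$ condition $f_0(0)f_1(0) = (-1)^n$. In the unitary case, $r'$ is taken odd so that $f_0$ can be chosen as a self-conjugate monic irreducible of degree $r'$, corresponding to a Singer cycle in $\GU_{r'}(q)$ of order $q^{r'}+1$, and $f_1$ is required self-conjugate of degree $n-r'$. The number of monic irreducibles of degree $r$ over $\FF_q$ is at least $q^r/(er)$ by the standard estimate $\prod_{i\geq 1}(1 - q^{-i}) > 1/e$ for $q \geq 2$; this is the source of the factor $1/(er)$ in $H(n,q,+)$. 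The defining condition $q^r \geq n+1$ ensures that $r$ is chosen minimally while keeping this lower bound meaningful and the constraint on $f_1$ non-degenerate. For each $f_0$ there are $q^{n-r-1}$ choices of $f_1$ in the linear case, producing at least $q^{n-1}/(er)$ distinct characteristic polynomials.

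To convert the polynomial count into $\Aut(S)$-orbits, I would use the standard correspondences: each characteristic polynomial with $\det = 1$ yields a single $\GL^\epsilon_n(q)$-orbit of $\SL^\epsilon_n(q)$-elements and hence a $\PGL^\epsilon_n(q)$-orbit on $\PSL^\epsilon_n(q)$-classes; identification under the center $Z(\SL^\epsilon_n(q))$ of order $(n, q - \epsilon 1)$, acting on polynomials via $f(X) \mapsto f(\zeta^{-1} X)$ with generically full orbits, contributes a denominator $(n, q - \epsilon 1)$; and dividing by $|\Aut(S)/\PGL^\epsilon_n(q)| \leq 2f$ accounts for field and graph automorphisms. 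Combining these in the linear case gives a lower bound on the order of $q^{n-1}/(er \cdot (n,q-\epsilon 1) \cdot 2f)$, which exceeds the target $q^{n-1}(n-1)/(2nf(n,q-\epsilon 1)\cdot er)$ by the harmless factor $n/(n-1)$, giving enough slack to absorb the boundary corrections coming from non-generic polynomials and automorphism fixed points.

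The main obstacle is the unitary case, where the self-conjugacy constraint on both $f_0$ and $f_1$ must be tracked carefully. Self-conjugacy of $f_1$ of degree $n - r'$ roughly halves the degrees of freedom, since the coefficients are paired by the antiholomorphic involution induced by the Hermitian form; this halving is exactly what produces the square root in $H(n,q,-)$. The remaining factor $(q-1)/(q+1) = (q^2-1)/(q+1)^2$ inside the square root reflects the Hermitian refinement of the determinant constraint: in $\SU_n(q)$ the constant-term condition is compatible with the cyclic group of $(q+1)$-th roots of unity in place of $\FF_q^*$. Verifying that these half-counts combine correctly, handling the parity constraint that $r'$ be odd (so $f_0$ of degree $r'$ is of Singer type with $|\bC| = q^{r'}+1$), and checking that the $\GU^\epsilon$-conjugacy-to-characteristic-polynomial correspondence remains bijective on self-conjugate data constitute the most delicate part of the argument.
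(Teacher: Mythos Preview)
Your approach is fundamentally different from the paper's and has genuine gaps.

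The paper's proof is short and assembles two black-box inputs. First, by \cite[Theorems 6.4 and 6.7]{Fulman-Guralnick12}, the minimal centralizer size of an element in $\GL^\epsilon_n(q)$ is at least $q^{n-1}(q-\epsilon 1)H(n,q,\epsilon)$; this is literally where $H$ comes from, as a constant in a centralizer-size lower bound, not from any polynomial count. Passing to $\PGL^\epsilon_n(q)\trianglelefteq\Aut(S)$, every $\Aut(S)$-orbit on $S$ has size at most $|\Aut(S)|/(q^{n-1}H(n,q,\epsilon))$. Second, by \cite[Lemmas 3.1 and 4.1]{Babai13}, the proportion of $p$-regular elements in $S$ is at least the proportion of permutations in $\Sy_n$ with no cycle of length divisible by $n$, namely $(n-1)/n$; this is the source of the factor $(n-1)/n$. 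Dividing $|S_{p'}|\geq (n-1)|S|/n$ by the maximal orbit size and using $|\Aut(S)|=2f(n,q-\epsilon 1)|S|$ gives the stated bound immediately.

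Your proposal instead tries to manufacture $H(n,q,\epsilon)$ by directly counting characteristic polynomials $f_0f_1$ with $f_0$ irreducible of degree $r$ (or $r'$). Several steps do not go through. The claim that the number of monic irreducibles of degree $r$ over $\FF_q$ is at least $q^r/(er)$ ``by the standard estimate $\prod_{i\geq 1}(1-q^{-i})>1/e$'' is not a valid deduction: that Euler product does not count degree-$r$ irreducibles (their number is $\sim q^r/r$, with no $1/e$); the product arises instead in cycle-index generating functions bounding centralizer sizes, which is precisely how Fulman--Guralnick obtain $H$. In the unitary case your sketch only asserts that self-conjugacy ``roughly halves the degrees of freedom'' and that $(q^2-1)/(q+1)^2$ ``reflects the Hermitian refinement''; this is not an argument and does not produce the specific expression $\big((q^2-1)/(er'(q+1)^2)\big)^{1/2}$, which again is a centralizer constant from \cite{Fulman-Guralnick12} rather than a polynomial count. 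Finally, you treat $(n-1)/n$ as slack to absorb boundary corrections, whereas in the paper it has an independent origin (the $p$-regular proportion via the cycle-type comparison with $\Sy_n$) and is an essential structural ingredient, not a leftover.
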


\begin{proof}
By \cite[Theorems 6.4 and 6.7]{Fulman-Guralnick12} and their proofs,
the minimal centralizer size of an element in $\GL^\epsilon_{n}(q)$
is at least
$$q^{n-1}(q-\epsilon1)H(n,q,\epsilon).$$  Since $\GL^\epsilon_{n}(q)$ has
center of order $q-\epsilon1$, the minimal centralizer size of an
element in $\PGL^\epsilon_{n}(q)$ is at least
$q^{n-1}H(n,q,\epsilon)$. There exists a normal subgroup $B$ of
$\Aut(S)$ isomorphic to $\PGL^\epsilon_{n}(q)$ with the property
that $S$ is normal in $B$. Thus the minimal centralizer size in
$\Aut(S)$ of an element in $S$ is at least $q^{n-1}H(n,q,\epsilon)$.
It follows that every $\Aut(S)$-orbit on $S$ has size at most
\[
\frac{|\Aut(S)|}{q^{n-1}H(n,q,\epsilon)}.
\]

On the other hand, by \cite[Lemmas 3.1 and 4.1]{Babai13}, we know
that the proportion of $p$-regular elements in $\PSL^\epsilon_n(q)$
is at least the proportion of elements in $\Sy_n$ that have no
cycles of length divisible by $n$. As the latter proportion is
$(n-1)/n$, we have $|S_{p'}|\geq (n-1)|S|/n$, and it follows from
the conclusion of the previous paragraph that
\[n(\Aut(S),\Cl_{p'}(S))
> \frac{(n-1)|S|}{n |\Aut(S)|} \cdot q^{n-1}H(n,q,\epsilon).\] The
result now follows by $|\Aut(S)| = 2f(n,q-\epsilon1) |S|$.
\end{proof}

\begin{proposition}\label{proposition-linear-unitary}
Let $S=\PSL^\epsilon_n(q)$ for $n\geq 4$ and $\epsilon=\pm$ and let
$p$ be a prime divisor of $|S|$. The number of $\Aut(S)$-orbits on
$p$-regular classes of $S$ is greater than $2\sqrt{p-1}$.
\end{proposition}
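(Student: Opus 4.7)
The plan is to split along the characteristic of $q$ relative to $p$ and, for $p \neq \ell$, to reduce the dimension $n$ via Lemma~\ref{piPSL} until the quantitative bound of Lemma~\ref{lemma-linear-unitary-bound} becomes directly applicable. If $p = \ell$, then $S$ has Lie rank $n - 1 \geq 3$ in characteristic $p \geq 5$, and Lemma~\ref{lemma-defining-characteristic} immediately gives the conclusion. Henceforth assume $p \neq \ell$, and let $m$ be the smallest positive integer with $p \mid q^m - (\epsilon 1)^m$. Since $p$ divides $|S|$, one has $m \leq n$, and iterating Lemma~\ref{piPSL} yields
\[
n(\Aut(S), \Cl_{p'}(S)) \geq n(\Aut(T), \Cl_{p'}(T))
\]
with $T := \PSL^\epsilon_m(q)$. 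By the minimality of $m$ the hypothesis of Lemma~\ref{lemma-linear-unitary-bound} is fulfilled for $T$ whenever $m \geq 4$.

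In the case $m \geq 4$, Lemma~\ref{lemma-linear-unitary-bound} supplies an explicit lower bound for $n(\Aut(T), \Cl_{p'}(T))$. Combined with the crude estimate $p \leq q^m + 1$, which yields $2\sqrt{p - 1} \leq 2 q^{m/2}$, the proposition reduces to the elementary inequality
\[
q^{m/2 - 1}(m - 1)\, H(m, q, \epsilon) > 4 m f (m, q - \epsilon 1).
\]
I expect this to hold for all but finitely many pairs $(m, q)$, and each remaining small pair can be handled individually, either by a \texttt{GAP} computation or through exceptional isomorphisms such as $\PSL_4(2) \cong \Al_8$ (already covered by Lemma~\ref{LemAlt}).

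In the case $m \in \{2, 3\}$, Lemmas~\ref{PSL2}, \ref{PSL3}, and \ref{PSU3} give $n(\Aut(T), \Cl_{p'}(T)) > 2\sqrt{p - 1}$ outside of the exceptional pairs in Table~\ref{table-exceptions}. The principal obstacle lies precisely with those exceptions: the reduction bound is then insufficient, and I would have to verify directly, for every $n \geq 4$ and every listed $(q, p)$, that $n(\Aut(S), \Cl_{p'}(S)) > 2\sqrt{p - 1}$. Because the exceptional list is short and involves small parameters, this amounts to a finite inspection, completed by explicit \texttt{GAP} computations, by refined centralizer estimates via Lemma~\ref{lemma-strongly-self-centralizing} applied to strongly self-centralizing tori inside $\PSL^\epsilon_n(q)$, or by directly enumerating $\Aut(S)$-orbits on semisimple classes of $S$. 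The residual numerical verifications in the $m \geq 4$ case are routine by comparison.
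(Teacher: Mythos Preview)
Your plan coincides with the paper's. The paper phrases the reduction as an induction on $n$: if $p\mid|\PSL^\epsilon_{n-1}(q)|$ one passes to $n-1$, and otherwise $p$ is primitive for $n$ and Lemma~\ref{lemma-linear-unitary-bound} is applied directly to $S$. Your version simply unwinds this induction beforehand, descending to $T=\PSL^\epsilon_m(q)$; the group to which Lemma~\ref{lemma-linear-unitary-bound} is ultimately applied is the same in both arguments. The paper's residual small cases (namely $q=2$ with $n\le9$; $q=3$ with $n\le5$; and $(n,q,\epsilon)\in\{(4,4,\pm),(4,5,+),(5,4,\pm)\}$) are disposed of by counting distinct element orders coprime to $p$ via embeddings such as $\SL^\epsilon_k(q)\times\SL^\epsilon_{n-k}(q)\subset\SL^\epsilon_n(q)$, rather than the tori or \texttt{GAP} routes you suggest, but either method works.

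One genuine omission: your dichotomy $m\ge4$ versus $m\in\{2,3\}$ forgets $m=1$, which occurs whenever $p\mid q-\epsilon1$. Then $T=\PSL^\epsilon_1(q)$ is trivial and your reduction says nothing. The fix is immediate---halt the descent at $\PSL_2(q)$ (noting $\PSU_2(q)\cong\PSL_2(q)$), where $p\mid q^2-1$ still gives $p\mid|T|$, and invoke Lemma~\ref{PSL2}---but it must be said. Also, your ``finite inspection'' for the Table~\ref{table-exceptions} pairs at $m\le3$ is only genuinely finite once you observe that, by the monotonicity of Lemma~\ref{piPSL}, it suffices to check the single value $n=4$ for each exceptional $(q,p)$; as written, ``every $n\ge4$'' is infinitely many groups.
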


\begin{proof} Note that the case $p\mid q$ has been done in
Lemma~\ref{lemma-defining-characteristic}, and so we assume that
$p\nmid q$. Furthermore, if $p\mid |\PSL^\epsilon_{n-1}(q)|$ then we
are done by Lemma~\ref{piPSL} and induction. So we assume also that
$p\mid (q^n-(\epsilon1)^n)$ but $p\nmid (q^i-(\epsilon 1)^i)$ for
every $1\leq i\leq n-1$, which means that $p$ is a primitive prime
divisor of $q^n-1$ when $\epsilon=+$ or $4\mid n$ and $\epsilon=-$,
and a primitive prime divisor of $q^{n/2}-1$ when $n\equiv 2 \pmod
{4}$ and $\epsilon=-$, and a primitive prime divisor of $q^{2n}-1$
when $n$ is odd and $\epsilon=-$.
Lemma~\ref{lemma-linear-unitary-bound} then implies that
\[n(\Aut(S),\Cl_{p'}(S))
> \frac{q^{n-1}(n-1)}{2nf(n,q-\epsilon1)} H(n,q,\epsilon).\]
A straightforward computation shows that this bound is greater than
$2\sqrt{p-1}$, and therefore we are done, unless $q=2$ and $n\leq
9$, or $q=3$ and $n\leq 5$, or $(n,q,\epsilon)\in
\{(4,4,\pm),(4,5,+), (5,4,\pm)\}$.

We now consider these exceptions in a case by case basis.

Let $q=2$. First the cases $S=\SL^\pm_4(2)$, $\SL^\pm_5(2)$, and
$\PSU_6(2)$ can be checked directly using \cite{Atl1}. The case of
$\SL_6(2)$ is not under consideration since $2^6-1$ has no primitive
prime divisor. For $n=7,8,9$ we will show that the number of
different element orders coprime to $p$ is greater than
$2\sqrt{p-1}$, and for that purpose it is enough to, and we will,
assume that $n=7$ as the maximal prime divisor of $S$ is a divisor
of $|\SL^\epsilon_7(2)|$. Here in fact $p=127=2^7-1$ for
$\epsilon=+$ and $p=43=(2^7+1)/3$ for $\epsilon=-$. We consider the
embeddings $\SL^\epsilon_4(2)\times \SL^\epsilon_3(2)\subset
\SL^\epsilon_7(2)$ and $\SL^\epsilon_5(2)\times
\SL^\epsilon_2(2)\subset \SL^\epsilon_7(2)$ and inspect the element
orders in the groups $\SL^\epsilon_k(2)$ for $2\leq k\leq 5$ in
\cite{Atl1} to produce more than $2\sqrt{p-1}$ element orders of
$\SL^\epsilon_7(2)$, proving the desired inequality.

Let $q=3$. Again the case of $S=\PSL^\pm_4(3)$ is available in
\cite{Atl1}, and so we assume that $S=\PSL^\pm_5(3)$. The case
$S=\PSL_5(3)$ is in fact easy as $p=11$ and $\PSL_5(3)=\SL_5(3)$
contains $\SL_4(3)$, which has more than 7 different element orders.
So it remains to consider $S=\PSU_5(3)$, in which case $p$ must be
$61=(3^5+1)/4$. But by using the embedding $\SU_4(3)\subset
\SU_5(3)=\PSU_5(3)$ and inspecting the element orders of $\SU_4(3)$,
we find that $\PSU_5(3)$ has at least 17 element orders coprime to
$p=61$, and hence the bound follows in this case.

Let $(n,q,\epsilon)=(4,4,\pm)$. Then we have $p=17$. From
\cite{Atl1} we observe that $\SL^\epsilon_3(4)$ has more than
$8=2\sqrt{p-1}$ different element orders, and thus, as
$S=\SL^\epsilon_4(4)\geq \SL^\epsilon_3(4)$, it follows that
$n(\Aut(S),\Cl_{p'}(S))>2\sqrt{p-1}$. Similarly when
$(n,q,\epsilon)=(5,4,\pm)$, by using \cite{Atl1} and considering the
embeddings $\SL_3(4)\subset \SL_5(4)=\PSL_5(4)$ and $\SU_3(4)\times
\SL_2(4)\subset \SU_5(4)$ one can produce at least $13$ different
orders coprime to $p$ of $S$, and therefore proving the inequality
as $p\leq 41$ in this case.

Finally for $(n,q,\epsilon)=(4,5,+)$ we have $p=13$ and on the other
hand, using the embeddings $\SL_2(5)\times \SL_2(5)\subset \SL_4(5)$
and $\SL_3(5)\subset \SL_4(5)$ one easily sees that $S$ has element
orders 1, 2, 3, 5, 6, 15, 31, implying that
$n(\Aut(S),\Cl_{p'}(S))\geq 7>2\sqrt{p-1}$.
\end{proof}

%%%%%%%%%%%%%%%%%%%%%%%%%%%%%%%%%%%%%%%%%%%%%%%%%%%%%%%%%%%%%%%%%%%
\section{Theorem \ref{proposition-simple-groups}: Symplectic and orthogonal
groups}\label{section-symplectic-orthogonal}

The aim of this section is to prove the following theorem, which
implies Theorem~\ref{proposition-simple-groups} for the symplectic
and orthogonal groups.

\begin{theorem}\label{theorem-symplectic-orthogonal-group}
Let $S=\PSp_{2n}(q)$, $\Omega_{2n+1}(q)$ for $n\geq 2$ and
$(n,q)\neq (2,2)$, or $S=P\Omega^\pm_{2n}(q)$ for $n\geq 4$. Let $p$
be a prime divisor of $|S|$. Then
\[
n(\Aut(S),\Cl_{p'}(S))>2\sqrt{p-1},
\]
with a single possible exception of $(S,p)=(\Omega_8^-(4),257)$, in
which case $2\sqrt{p-1}=32\leq n(\Aut(S),\Cl_{p'}(S))$.
\end{theorem}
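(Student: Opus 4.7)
The plan is to adapt the template from Sections \ref{subsection-linear}--\ref{subsection-unitary} to the symplectic and orthogonal setting, and to handle the exceptional pair $(\Omega_8^-(4),257)$ by a direct count. As a first step I would dispose of the defining characteristic case $p=\ell$. Lemma \ref{lemma-defining-characteristic} covers every $S$ of rank at least $3$; the remaining group $\PSp_4(q)\cong\Omega_5(q)$ is treated by exhibiting strongly self-centralizing maximal tori of orders $(q^2+1)/\gcd(2,q-1)$ and $(q\pm 1)^2/\gcd(2,q-1)$ with small relative Weyl groups, applying Lemma \ref{lemma-strongly-self-centralizing} to bound $k_{p'}(S)$ from below, and then dividing by $|\Out(S)|$.

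For cross characteristic the first move is an inductive reduction. Lemma \ref{piPSL} already handles $\PSp_{2n}(q)\to\PSp_{2n-2}(q)$ when $q$ is odd; for $q$ even and for the orthogonal families I would prove the analogous statement using a non-degenerate orthogonal decomposition $V=W\perp U$ with $\dim W\in\{1,2\}$, Wall's conjugacy classification \cite{Wall}, and the fact that the field, diagonal, and (in type $D_n$) graph outer automorphisms preserve the stabilizer of $W$. This reduces the problem to the case where $p$ is coprime to the order of every proper classical subgroup of the same family, which forces $p$ to be a primitive prime divisor of the appropriate cyclotomic factor of $|S|$, namely $q^{2n}-1$ in types $B_n$, $C_n$, ${}^2D_n$, and $q^n-1$ in type $D_n$.

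In that primitive-prime-divisor regime I would mimic Lemma \ref{lemma-linear-unitary-bound}: combining a Fulman--Guralnick-style lower bound on the minimal centralizer of an element of $\Sp_{2n}(q)$ or $\GO^\epsilon_{2n}(q)$ with the Babai-type estimate that the proportion of $p$-regular elements of $S$ is at least $(n-1)/n$ yields a bound of the shape
\[
n(\Aut(S),\Cl_{p'}(S)) \;>\; \frac{(n-1)\,q^n\,H(n,q)}{n\,|\Out(S)|},
\]
which, using $p-1\leq q^n+1$, beats $2\sqrt{p-1}$ outside a short explicit list of small $(n,q)$. Those residual cases are settled either via \cite{Atl1,GAP} or by listing element orders inside proper classical subgroups, exactly as at the end of Proposition \ref{proposition-linear-unitary}. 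The borderline pair $(\Omega_8^-(4),257)$---where $257$ is a primitive prime divisor of $4^8-1$ and coincides with the order of the Singer-like torus---is treated separately by a direct count of $\Aut(S)$-orbits on semisimple classes of that torus, supplemented with orbits inherited from an $\Omega_6^-(4)$ subgroup, yielding $n(\Aut(S),\Cl_{p'}(S))\geq 32=2\sqrt{p-1}$.

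The main obstacle I expect is the orthogonal side. Lemma \ref{piPSL} does not apply there, so the inductive step has to be re-proved from scratch, and the graph automorphisms of $P\Omega^\pm_{2n}(q)$---in particular triality for $D_4$---inflate $|\Out(S)|$ and weaken the generic orbit-count bound. This forces a case-by-case examination of the low-rank orthogonal groups, and specifically of $\Omega_8^\pm(q)$ for small $q$, which is precisely where the single exception $(\Omega_8^-(4),257)$ in the statement arises.
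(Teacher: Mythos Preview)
Your plan is workable but differs substantially from the route the paper takes. The paper does \emph{not} extend Lemma~\ref{piPSL} to even $q$ or to the orthogonal families, and it does not argue by induction on the rank at all. Instead it bounds $k_{p'}(S)$ directly: for $\PSp_{2n}(q)$ and $\Omega_{2n+1}(q)$ it counts unipotent classes via partition identities (Lemma~\ref{lemma-kp'-symplectic}) and adds the $\lceil (q^n-2)/4n\rceil$ semisimple classes coming from a single cyclic torus of order $q^n\pm 1$; for $P\Omega^\pm_{2n}(q)$ it combines analogous unipotent counts (Lemma~\ref{lemma-unipotent-class}) with a torus of order $q^{n-1}\pm 1$ (Lemma~\ref{lemma-orthogonal-bound}) and, in the large-$p$ regime, the Fulman--Guralnick centralizer bound together with the Babai proportion $(m-1)/m$. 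The paper then treats $P\Omega_8^\pm(q)$ in separate lemmas, and the exceptional pair $(\Omega_8^-(4),257)$ is handled by exhibiting $32$ distinct $p'$-element orders via random searches. What your approach buys is uniformity with the linear and unitary cases; what the paper's approach buys is that it sidesteps entirely the orthogonal analogue of Lemma~\ref{piPSL}, whose proof---as you yourself anticipate---is delicate because the graph automorphisms, and triality for $D_4$, need not preserve the stabilizer of a non-degenerate $2$-space even up to $S$-conjugacy.

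Two small points to flag in your sketch. First, the tori of order $(q\pm 1)^2/\gcd(2,q-1)$ in $\PSp_4(q)$ are \emph{not} strongly self-centralizing (an element of the shape $(1,a)$ is centralized by a full $\Sp_2(q)$ factor), so Lemma~\ref{lemma-strongly-self-centralizing} does not apply to them; the paper instead uses the uniform centralizer bound $|\bC_S(g)|\geq (q^2-1)/(2,q-1)$ from \cite{Enomoto72,Srinivasan68} together with the Babai proportion $3/8$ or $1/2$. Second, your reduction in type $D_n$ should go through \emph{both} $\Omega^{+}_{2(n-1)}(q)$ and $\Omega^{-}_{2(n-1)}(q)$ simultaneously to force $p\nmid\prod_{i\le n-1}(q^{2i}-1)$; reducing only within one sign does not pin down the order of $q$ modulo $p$.
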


We note that for the exception $(S,p)=(\Omega_8^-(4),257)$, we found
by using \cite{GAP} that $S$ has exactly $32$ different element
orders coprime to $p$, and therefore it is unlikely that this pair
is a true exception. In any case, since $n(\Aut(S),\Cl_p(S))\geq 1$,
the wanted bound $n(\Aut(S),\Cl_p(S)\cup \Cl_{p'}(S))>2\sqrt{p-1}$
in Theorem~\ref{proposition-simple-groups}(i) still holds for this
exception.

As the cases $p=2, 3$ or $p\mid q$ and $n\geq 3$ have been
considered in Section~\ref{section-some-generalities}, we will
assume that $p\geq 5$. Moreover, we assume $p\nmid q$ except in the
case of $S = \PSp_4(q)\cong \Omega_5(q)$, due to
Lemma~\ref{lemma-defining-characteristic}.

\subsection{Symplectic
groups and odd-dimensional orthogonal groups}

\begin{lemma}
\label{PSp4} Theorem \ref{theorem-symplectic-orthogonal-group} holds
for $S = \PSp_4(q)\cong \Omega_5(q)$ with $q\neq 2$.
\end{lemma}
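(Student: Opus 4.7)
The plan is to adapt the strategy of Lemma~\ref{PSU3}: combine a strongly self-centralizing maximal torus of $S=\PSp_4(q)$ with a lower bound on the minimum centralizer of a non-identity element, and then apply Lemma~\ref{lemma-strongly-self-centralizing}. The principal torus is the Coxeter torus $T_c\le S$ of order $(q^2+1)/(2,q-1)$ with $|\bN_S(T_c)/T_c|=4$; a generator has four distinct eigenvalues on the natural $\Sp_4$-module outside the preimage of the center, so $T_c$ is strongly self-centralizing.

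From Srinivasan's character table of $\Sp_4(q)$ (or a direct computation in the spirit of \cite{Simpson73}), the minimum centralizer in $S$ of a non-identity element is $(q-1)^2/(2,q-1)$, realized by a regular semisimple element of the split torus. Consequently every $\Aut(S)$-orbit on $S\setminus\{1\}$ has size at most $(2,q-1)|\Aut(S)|/(q-1)^2$. Combining this with $|\Aut(S)|\le 2f(2,q-1)|S|$ and Lemma~\ref{lemma-strongly-self-centralizing} applied to $T_c$ gives
\[
n(\Aut(S),\Cl_{p'}(S))\;\ge\;\frac{|S_{p'}|\,(q-1)^2}{(2,q-1)\,|\Aut(S)|}\;\ge\;\frac{\beta\,(q-1)^2}{2f(2,q-1)^2},
\]
where $\beta=3/4$ when $p\mid|T_c|$ (Lemma~\ref{lemma-strongly-self-centralizing}(i)) and $\beta=(|T_c|-1)/(4|T_c|)$ otherwise (part (ii)).

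A case split on which cyclotomic factor of $|S|$ the prime $p$ divides completes the analysis. If $p\mid q^2+1$ then $p-1\le q^2$ and the displayed bound with $\beta=3/4$ exceeds $2\sqrt{p-1}$ once $q$ is large enough. If $p\mid q^2-1$ then $p-1\le q$ and the bound with $\beta\to 1/4$ suffices asymptotically. If $p=\ell$ we invoke Carter's theorem \cite[Theorem 3.7.6]{Carter}: $\Sp_4(q)$ has exactly $q^2$ semisimple classes, so $k_{p'}(S)\ge q^2/(2,q-1)$ and hence $n(\Aut(S),\Cl_{p'}(S))\ge q^2/(2f(2,q-1)^2)$, which exceeds $2\sqrt{\ell-1}\le 2\sqrt{q-1}$ outside a short explicit list of $q$.

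The main obstacle is that the asymptotic bounds above are tight, so a moderate range of $q$ (including $q=3,4,5,7,8,9$ and small prime powers such as $16,25,27,32,49,64,81$) must be dispatched individually using \cite{Atl1} and \cite{GAP}, possibly supplemented by Srinivasan's explicit class list of $\Sp_4(q)$ to count $k_{p'}(S)$ exactly in the borderline cases. Once these finitely many exceptional pairs $(S,p)$ are checked, the inequality $n(\Aut(S),\Cl_{p'}(S))>2\sqrt{p-1}$ follows in all cases.
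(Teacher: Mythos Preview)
Your approach is correct and close in spirit to the paper's, but the specific ingredients differ in two places worth noting.

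First, for the proportion $|S_{p'}|/|S|$ the paper does \emph{not} invoke a strongly self-centralizing torus of $\PSp_4(q)$; instead it cites \cite[Lemma~5.1]{Babai13} directly, obtaining $|S_{p'}|\ge \tfrac{3}{8}|S|$ when $p\mid q^2-1$ and $|S_{p'}|\ge \tfrac{1}{2}|S|$ when $p\mid q^2+1$. Your route through the Coxeter torus $T_c$ of order $(q^2+1)/(2,q-1)$ and Lemma~\ref{lemma-strongly-self-centralizing} is a legitimate alternative (it is true that every non-identity element of $T_c$ has four distinct eigenvalues on the natural module and hence $T_c$ is strongly self-centralizing, with relative Weyl group of order $4$), and it yields the slightly different constants $\beta=3/4$ and $\beta\approx 1/4$ in the two sub-cases.

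Second, the paper records the centralizer lower bound $|\bC_S(g)|\ge (q^2-1)/(2,q-1)$ from \cite{Enomoto72,Srinivasan68}, whereas you use $(q-1)^2/(2,q-1)$. Your constant is the safe one (the split torus really has order $(q-1)^2$), and either choice suffices asymptotically; yours simply enlarges the finite list of small $q$ that must be checked by hand. In either version the defining-characteristic case $p=\ell$ is handled exactly as you propose, via the $q^2$ semisimple classes of $\Sp_4(q)$.

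So: same architecture (centralizer bound, proportion bound, cyclotomic case split, residual direct checks), different source for the proportion of $p$-regular elements and a slightly more conservative centralizer estimate on your side, leading to a somewhat longer list of exceptional $q$ to dispatch with \cite{Atl1,GAP}.
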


\begin{proof}
We assume that $q\geq 7$ as the cases $q=3,4,5$ can be confirmed
directly using \cite{Atl1}. First suppose that $p\mid q$. Recall
that $p\geq 5$, and so $q$ is odd. Then $k_{p'}(S)\geq q^2/2$ and
therefore $n(\Aut(S),\Cl_{p'}(S))>q^2/4f$. One can check that
$q^2/(4f)>2\sqrt{p-1}$ for all $q\geq 7$.

So it remains to assume that $p\nmid q$. From
\cite{Enomoto72,Srinivasan68}, we observe that $|\bC_S(g)|\geq
(q^2-1)/(2,q-1)$ for every $g\in S$. Suppose that $p\mid (q^2-1)$.
\cite[Lemma 5.1]{Babai13} then implies that the proportion of
$p$-regular elements in $S$ is at least $3/8$. Therefore
$k_{p'}(S)\geq 3(q^2-1)/8(2,q-1)$, and thus
\[
n(\Aut(S),\Cl_{p'}(S))> \frac{3(q^2-1)}{8f(2,q-1)^2(2,q)}.
\]
This bound is larger than $2\sqrt{q}\geq 2\sqrt{p-1}$ if $q\geq 23$.
When $q<23$ we must have $p\leq 7$ since $p\mid (q^2-1)$ and we
are done as $|S|$ is divisible by at least four primes.

We now consider the case $p\mid (q^4-1)$ but $p\nmid (q^2-1)$. The
proportion of $p$-regular elements in $S$ is now at least $1/2$ again by
\cite[Lemma 5.1]{Babai13}. We then have
\[
n(\Aut(S),\Cl_{p'}(S))> \frac{(q^2-1)}{2f(2,q-1)^2(2,q)}.
\]
This bound is larger than $2q$, which in turn is at least
$2\sqrt{p-1}$ unless \[q\in\{7, 8, 9, 11, 13, 16, 25, 27, 32\}.\]
For each $q$ in this set, we find that the inequality
$$\lceil(q^2-1)/{(2f(2,q-1)^2(2,q))}\rceil > 2\sqrt{p-1}$$ still
holds, unless $(q,p)\in \{(8,13),(9,41)\}$. These two exceptions can
be confirmed using \cite{GAP}.
\end{proof}

Let ${\bf p}(i)$ be the number of distinct ways of representing $i$ as a
sum of positive integers and ${\bf p}_{0}(i)$ be the number of distinct ways
of representing $i$ as a sum of odd positive integers.

\begin{lemma}\label{lemma-kp'-symplectic}
Let $p\geq 3$ be a prime not dividing $q$. We have
\[
k_{p'}(\PSp_{2n}(q))\geq \left\{\begin{array}{ll}\sum_{i=0}^n
{\bf p}(i){\bf p}_{0}(n-i)+
\left\lceil\dfrac{q^n-2}{4n}\right\rceil& \mathrm{ if }\ q \ \mathrm{ is \ odd} \\
{\bf p}(n)+\left\lceil\dfrac{q^n-2}{2n}\right\rceil & \mathrm{if}\ q\
\mathrm{is\ even},\end{array} \right.
\]
and
\[
k_{p'}(\Omega_{2n+1}(q))\geq
\left\{\begin{array}{ll}\sum_{i=0}^{\lfloor (2n+1)/4\rfloor}
{\bf p}(i){\bf p}_{0}(2n+1-4i)+
\left\lceil\dfrac{q^n-2}{4n}\right\rceil& \mathrm{ if }\ q \ \mathrm{ is \ odd} \\
{\bf p}(n)+\left\lceil\dfrac{q^n-2}{2n}\right\rceil & \mathrm{if}\ q\
\mathrm{is\ even}.\end{array} \right.
\]
\end{lemma}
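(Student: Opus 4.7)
The plan is to lower bound $k_{p'}(S)$ by combining two disjoint families of $p$-regular classes: unipotent classes (all $p$-regular since $p \nmid q$), and non-identity semisimple $p$-regular classes coming from a carefully chosen cyclic maximal torus whose order is coprime to $p$. These two families overlap only at the identity, so the overall lower bound is the unipotent class count plus the non-identity semisimple class count.

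For the unipotent contribution I will invoke the Wall classification. In $\Sp_{2n}(q)$ with $q$ odd, unipotent classes biject (ignoring sign data that can only increase the count) with partitions $\lambda \vdash 2n$ in which odd parts have even multiplicity. I would split such a $\lambda$ into its even-part subpartition (summing to $2i$; halving each part yields an arbitrary partition of $i$, contributing a factor of ${\bf p}(i)$) and its odd-part subpartition (whose even multiplicities pair up into pairs $(k,k)$ of equal odd values; retaining one representative per pair gives a partition of $n-i$ into odd parts, contributing a factor of ${\bf p}_0(n-i)$). Summing over $i$ yields $\sum_{i=0}^n {\bf p}(i){\bf p}_0(n-i)$. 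For $\Omega_{2n+1}(q)$ with $q$ odd, the analogous parameterization uses partitions of $2n+1$ with even parts of even multiplicity: paired equal even parts $(2k, 2k)$ contribute $4k$ in total, so the even-part total is $4j$ and yields ${\bf p}(j)$ choices after the obvious reduction, while the odd parts yield ${\bf p}_0(2n+1-4j)$ choices. For $q$ even, I would use the embedding $\GL_n(q) \hookrightarrow \Sp_{2n}(q)$ (and likewise into $\Omega_{2n+1}(q)$) to exhibit at least ${\bf p}(n)$ distinct unipotent classes, since different Jordan types on the natural module remain non-conjugate in the ambient classical group. Unipotent classes descend bijectively from the quasisimple cover to the simple quotient because $-I$ times a unipotent element is never unipotent.

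For the semisimple contribution I would pick a cyclic maximal torus $T$ of the quasisimple cover, of order either $q^n+1$ (the Coxeter torus) or $q^n-1$ (from the Singer cycle of an embedded $\GL_n(q) \leq \Sp_{2n}(q)$ or $\leq \Omega_{2n+1}(q)$), choosing whichever has order coprime to $p$. Such a choice exists because $\gcd(q^n-1, q^n+1) \mid 2$ and $p \geq 3$. In either case the relative Weyl group $N_G(T)/T$ has order $2n$: for the Coxeter class this is the cyclic group of order $2n$ generated by a Coxeter element of $W(B_n)=W(C_n)$, while for the positive $n$-cycle class it is $C_n \times C_2$. Every element of $T$ is $p$-regular and semisimple, so the number of $G$-classes represented in $T$ equals the number of $N_G(T)/T$-orbits on $T$, bounded below by $\lceil |T|/(2n) \rceil$. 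Passing to the simple quotient (dividing by $|Z(G)| = 2$ for $\PSp_{2n}(q)$ with $q$ odd, and trivially otherwise) and subtracting the identity orbit produces at least $\lceil (q^n-2)/(4n) \rceil$ non-identity semisimple $p$-regular classes for $q$ odd, and $\lceil (q^n-2)/(2n) \rceil$ for $q$ even.

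The main technical point will be matching the exact ceiling bound, particularly when $p \mid q^n+1$ forces a switch from the Coxeter torus to the smaller $q^n-1$ torus. The slight slack between the raw orbit-count estimate $(q^n\pm 1)/(4n)$ and the claimed $(q^n-2)/(4n)$ is calibrated precisely to absorb the effect of this switch together with the removal of the identity class and the descent through the center. All remaining steps---verifying that $Z(G) \subset T$ for any maximal torus, confirming $|N_G(T)/T| = 2n$ in each of the two torus cases, and noting that the two families of classes are disjoint outside the identity---are routine.
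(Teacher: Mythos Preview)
Your proposal is correct and follows essentially the same approach as the paper: both split the count into unipotent classes (via the Wall/Gonshaw--Liebeck parameterization, with the same combinatorial bijections you describe) plus nontrivial semisimple classes coming from a cyclic maximal torus of order $q^n\pm 1$ chosen coprime to $p$, whose relative Weyl group has order $2n$. The only minor difference is that for $q$ even you obtain the ${\bf p}(n)$ unipotent classes via the Levi embedding $\GL_n(q)\hookrightarrow\Sp_{2n}(q)$ rather than by citing the even-characteristic classification directly; this is a perfectly good alternative and yields the same bound.
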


\begin{proof}
Note that, by the assumption, $p$ cannot divide both $q^n-1$ and
$q^n+1$. So let $T$ be a maximal torus of $G:=\Sp_{2n}(q)$ of order
$q^n\pm1$ such that $p\nmid |T|$. Since the fusion of (semisimple)
elements in this torus is controlled by the relative Weyl group of a
Sylow $n$-torus with order $2n$ (see \cite[Proposition
5.5]{Malle-Maroti} and its proof), there exist at least
$(|T|-1)/(2n)$ nontrivial semisimple classes of $G$ with
representatives in $T$. It follows that $S$ has at least
$(q^n-2)/(2n(2,q-1))$ nontrivial $p$-regular semisimple classes.

Suppose first that $q$ is odd. Let $J_k$ denote the Jordan block of
size $k$. By \cite[Proposition 2.3]{Gonshaw-Liebeck17} we know that
for each Jordan form
\[
\sum_{i=1}^r (J_{2k_i})^{a_i}+ \sum_{j=1}^s (J_{2l_j+1})^{b_j},
\]
where the $k_i$ are distinct, the $l_j$ are distinct, and $\sum_{i=1}^r
a_ik_i+\sum_{j=1}^s b_j(2l_j+1)=n$, there are $2^r$ corresponding
unipotent classes of $\Sp_{2n}(q)$ (and therefore of $\PSp_{2n}(q)$)
of elements having that form. These classes are all $p$-regular
since $p\nmid q$. Since there are $\sum_{i=0}^n {\bf p}(i){\bf p}_{0}(n-i)$ such
Jordan forms, we obtain the desired bound in this case.

Symplectic groups in even characteristic and odd-dimensional
orthogonal groups follow from \cite[Proposition 2.4 and Theorem
3.1]{Gonshaw-Liebeck17} in a similar way.
\end{proof}

We are now ready to prove Theorem
\ref{theorem-symplectic-orthogonal-group} for $S=\PSp_{2n}(q)$ and
$S=\Omega_{2n+1}(q)$ for $n\geq 3$. The treatments for these two
families are almost identical, so let us provide details only for
symplectic groups.

Suppose first that $q$ is odd. Then by
Lemma~\ref{lemma-kp'-symplectic} and its proof, we obtain
\[
n(\Aut(S),\Cl_{p'}(S)) \geq 1+\left\lceil \frac{\sum_{i=0}^n
{\bf p}(i){\bf p}_{0}(n-i)-1}{2f}\right\rceil+\left\lceil
\frac{q^n-2}{8fn}\right\rceil=:R(q,n),
\]
as $|\Out(S)|=2f$ and note that the trivial class is an
$\Aut(S)$-orbit itself. Note also that $p\leq (q^n+1)/2$ for $n\geq
4$ and $p\leq q^2+q+1$ for $n=3$. One now can check that
$R(q,n)>2\sqrt{p-1}$ for all relevant values.

Next suppose that $q$ is even. We now have
\[
n(\Aut(S),\Cl_{p'}(S)) \geq 1+\left\lceil
\frac{{\bf p}(n)-1}{f}\right\rceil+\left\lceil
\frac{q^n-2}{2fn}\right\rceil=:R'(q,n),
\]
Again one can check that $\left\lceil
{(q^n-2)}/{(2fn)}\right\rceil>2q^{n/2}\geq 2\sqrt{p-1}$ unless
$(n,q)\in \mathcal{S}:=\{(3,2)$, $(3,4)$, $(4,2)$, $(4,4)$, $(5,2)$,
$(5,4)\}$ or $6\leq n\leq 10$ and $q=2$. We then observe that
$R'(n,q)>2\sqrt{p-1}$ in the latter case.

Note that if $n=5$ then $p\leq (q^5-1)/(q-1)$ and the desired bound
$R'(n,q)>2\sqrt{p-1}$ still holds for $(n,q)=(5,4)$. For
$(n,q)=(5,2)$, the only prime failing the inequality
$R'(n,q)>2\sqrt{p-1}$ is $p=31=q^n-1$, but in this case we remark
that there are at least $\lceil (q^n/10f)\rceil=4$ nontrivial
semisimple $p$-regular classes and at least ${\bf p}(5)-1=6$ nontrivial
unipotent classes, totaling to at least 11 $p$-regular classes of
$S$, and hence $n(\Aut(S),\Cl_{p'}(S))=k_{p'}(S)\geq
11>2\sqrt{p-1}$, as required.

For $n=3$ we note that $p\leq q^2+q+1$ and $\PSp_6(q)$ has at least
9 unipotent classes, and thus the bound holds for $(n,q)=(3,2)$ or
$(3,4)$. For $n=4$ we note that $\PSp_8(q)$ has at least 24
unipotent classes and so we are done for $(n,q)=(4,2)$ and
$(n,q)=(4,4)$ as well. The proof is complete.

%\begin{lemma}
%Let $S=\PSp_{2n}(q)$ with $q=\ell^f$ and $n\geq 3$. Let $p\geq 5$ be
%a prime not dividing $q$. We have
%\[
%n(\Aut(S),\Cl_{p'}(S))> 1+\frac{q^n-2}{2nf(2,q-1)}.
%\]
%Moreover, if $m$ is the smallest positive integer such that $p\mid
%(q^m\pm 1)$ and $m>\max\{3,n/2\}$, then
%\[
%n(\Aut(S),\Cl_{p'}(S))\geq \frac{q^n(m-1)}{mf(2,q-1)^2}\left[
%\frac{1-1/q}{2e(\log_q(4n)+4)}\right]^{1/2}.
%\]
%\end{lemma}
%
%
%
%By \cite[Theorem 6.13]{Fulman-Guralnick12}, the centralizer size of
%an element in $\Sp_{2n}(q)$ is at least
%\[
%q^n\left[ \frac{1-1/q}{2e(\log_q(4n)+4)}\right]^{1/2}.
%\]
%Thus, for every $g\in S$, we have
%\[
%|\bC_S(g)|\geq \frac{q^n}{(2,q-1)}\left[
%\frac{1-1/q}{2e(\log_q(4n)+4)}\right]^{1/2}.
%\]
%By \cite[Theorem 1.1]{Babai-Palfy-Saxl09}, the proportion of
%$p$-regular elements in $S$ is at least $1/4n$. It follows that
%\[
%k_{p'}(S)\geq \frac{q^n}{4n(2,q-1)}\left[
%\frac{1-1/q}{2e(\log_q(4n)+4)}\right]^{1/2}.
%\]
%Note that $|\Out(S)|=(2,q-1)f$. Therefore we find that
%\[
%n(\Aut(S),\Cl_{p'}(S))\geq \frac{q^n}{4fn(2,q-1)^2}\left[
%\frac{1-1/q}{2e(\log_q(4n)+4)}\right]^{1/2}.
%\]
%
%It is easy to see that if $p\mid |S|$ then $p\leq q^n+1$. Therefore,
%to prove the proposition in this case, it is sufficient to show
%\[
%\frac{q^n}{4fn(2,q-1)^2}\left[
%\frac{1-1/q}{2e(\log_q(4n)+4)}\right]^{1/2}\geq 2q^{n/2}.
%\]
%This is true unless $(n,q)=???$

%%%%%%%%%%%%%%%%%%%%%%%%%%%%%%%%%%%%%%%%%%%%%%%%%%%%%%%%%%%

\subsection{Orthogonal
groups in even
dimension}\label{subsection-orthogognal-even-dimension} Let
$S=P\Omega_{2n}^\epsilon(q)$ for $n\geq 4$, $\epsilon=\pm$, and
$q=\ell^f$ where $\ell$ is a prime.

We start this subsection by proving a lower bound for the number of
unipotent classes in even-dimensional orthogonal groups.

\begin{lemma}\label{lemma-unipotent-class} The following holds:
\begin{itemize}
\item[(i)] $P\Omega_{2n}^\pm(q)$ has at least $\sum_{i=0}^{\lfloor
n/2\rfloor}{\bf p}(i){\bf p}_{0}(2n-4i)$ unipotent classes if $q$ is odd.

\item[(ii)] $P\Omega_{2n}^+(q)$ has at least ${\bf p}(n)+\sum_{i\neq j; i+j\leq n; i,j\ odd}
{\bf p}(n-i-j)$ unipotent classes if $q$ is even.

\item[(iii)] $P\Omega_{2n}^-(q)$ has at least $\sum_{1\leq i\leq n; i\ odd} {\bf p}(n-i)$
unipotent classes if $q$ is even.
\end{itemize}
\end{lemma}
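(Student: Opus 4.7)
The plan is to count unipotent conjugacy classes directly from the classification of unipotent elements in classical groups presented in Gonshaw--Liebeck \cite{Gonshaw-Liebeck17}, following the same pattern as the proof of Lemma \ref{lemma-kp'-symplectic}.

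For part (i), in odd characteristic a unipotent element of $\Omega_{2n}^\epsilon(q)$ has Jordan normal form a partition $\lambda$ of $2n$ in which every even part occurs with even multiplicity. Writing such a $\lambda$ as
\[
\sum_{i=1}^r (J_{2k_i})^{2a_i}+\sum_{j=1}^s(J_{2l_j+1})^{b_j}
\]
with distinct $k_i\geq 1$ and distinct $l_j\geq 0$, and setting $t=\sum k_i a_i$, the even-part contribution to $2n$ equals $4t$ while the odd-part contribution is $2n-4t$. The number of admissible even parts with this total is the number of partitions of $t$, namely ${\bf p}(t)$, and the number of admissible odd-part collections summing to $2n-4t$ is ${\bf p}_0(2n-4t)$. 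Summing over $0\leq t\leq \lfloor n/2\rfloor$ gives the asserted count of admissible Jordan forms. It remains to check that each such partition is realised by at least one $\Omega_{2n}^\epsilon(q)$-class for each choice of sign $\epsilon$, and that distinct unipotent classes stay distinct in $P\Omega_{2n}^\epsilon(q)$. The latter is immediate since the central element $-I$ sends a unipotent $u$ to $-u$, which is not unipotent unless $u=\pm I$.

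For parts (ii) and (iii), in characteristic two we appeal to the classification of unipotent classes of orthogonal groups due to Wall and recorded in Section~3 of \cite{Gonshaw-Liebeck17}. Unipotent classes are parametrised by partitions of $2n$ in which odd parts occur with even multiplicity, together with combinatorial labels on the even-part blocks that encode whether the ambient quadratic form has type $+$ or $-$. For (ii) we exhibit two disjoint families of admissible data whose classes lie in $\Omega_{2n}^+(q)$: first, Jordan forms with all parts even, contributing ${\bf p}(n)$ by writing each such partition as $2\mu$ with $\mu\vdash n$; second, Jordan forms with exactly two distinct odd parts $i\neq j$, each of multiplicity two, with $i+j\leq n$, and the remaining $2n-2i-2j$ filled by even parts, contributing ${\bf p}(n-i-j)$ per admissible pair $(i,j)$. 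For (iii) we exhibit the family in $\Omega_{2n}^-(q)$ consisting of Jordan forms with a single odd part $i$ (of odd value, $1\leq i\leq n$) of multiplicity two, with remaining even parts summing to $2n-2i$, yielding ${\bf p}(n-i)$ per $i$. In each case the sign is forced by the arithmetic of the quadratic forms on the invariant subspaces attached to the odd-part blocks, and passing to the simple quotient again loses nothing on unipotent classes.

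The main obstacle is the bookkeeping in characteristic two: one must correctly invoke the decoration of the Jordan form to decide which admissible data yield classes in $\Omega_{2n}^+(q)$ versus $\Omega_{2n}^-(q)$, and check that the exhibited families are genuinely pairwise non-conjugate in $\Omega$. Once this is done, the residual enumeration is an elementary identity for partitions.
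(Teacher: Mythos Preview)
Your enumeration of admissible Jordan forms in part (i) is correct and identical to the paper's, and your treatment of parts (ii) and (iii) via explicit families of decorated Jordan forms is in line with what the paper intends (the paper simply cites \cite[Theorem~3.1]{Gonshaw-Liebeck17} and says the result follows ``in a similar way'', so your description is in fact more detailed than the paper's own).

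There is, however, a genuine gap in your argument for part (i) when $\epsilon=-$. Your plan hinges on the claim that ``each such partition is realised by at least one $\Omega_{2n}^\epsilon(q)$-class for each choice of sign $\epsilon$'', and you leave this as a step to be checked. But this claim is false: by \cite[Proposition~2.4]{Gonshaw-Liebeck17}, a Jordan form with \emph{no} odd parts (such forms exist whenever $n$ is even, e.g.\ $J_2^{\,n}$) yields one unipotent class in $\GO_{2n}^+(q)$ and \emph{none} in $\GO_{2n}^-(q)$. Hence for $\epsilon=-$ you cannot simply count one class per admissible partition; your count over-shoots by the number of all-even forms.

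The paper circumvents this by invoking the full force of \cite[Proposition~2.4]{Gonshaw-Liebeck17}: a Jordan form with $r$ distinct odd parts contributes $2^{r-1}$ classes to each of $\GO_{2n}^\pm(q)$ (with the $r=0$ exception just described). The surplus $2^{r-1}-1$ coming from forms with $r\geq 2$ then compensates for the missing $r=0$ forms on the minus side, so that the total number of $\GO_{2n}^-(q)$-classes is still at least the number of admissible Jordan forms. You will need to incorporate this multiplicity argument, or an equivalent compensation, to repair part (i) for $\epsilon=-$.
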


\begin{proof}
First suppose that $q$ is odd. Consider the Jordan forms
\[\sum_{i=1}^r (J_{2k_i+1})^{a_i}+\sum_{j=1}^s (J_{2l_j})^{b_j},\]
where $k_i\geq 0$ are distinct and $l_j\geq 1$ are distinct such
that
\[\sum_{i=1}^r a_i(2k_i+1)+4\sum_{j=1}^s b_j l_j = 2n.\] Here we recall
that $J_k$ denotes the Jordan block of size $k$. It was shown in
\cite[Proposition 2.4]{Gonshaw-Liebeck17} that the unipotent
elements with such a Jordan form  fall into $2^{r-1}$ classes in
each of $\GO_{2n}^+(q)$ and $\GO_{2n}^-(q)$, with the exception that
if $r = 0$, it is 1 class in $\GO_{2n}^+(q)$ and none in
$\GO_{2n}^-(q)$. As $q$ is odd, these classes are inside
$\Omega_{2n}^\pm(q)$ and different classes produce different
corresponding classes of $S$. Now $(i)$ follows since the number of
those Jordan forms is $\sum_{i=0}^{\lfloor n/2\rfloor}{\bf p}(i){\bf p}_{0}(2n-4i)$,
with the remark that there is at least one such form with $r\geq 2$.

In a similar way, (ii) and (iii) follow from the description of
unipotent classes of $\GO_{2n}^\pm(2^f)$ as well as
$P\Omega_{2n}^\pm(2^f)=\Omega_{2n}^\pm(2^f)$ in \cite[Theorem
3.1]{Gonshaw-Liebeck17}.
\end{proof}

\begin{lemma}\label{lemma-orthogonal-bound} Suppose that $p$ is odd and $(n,\epsilon) \neq(4,+)$.
There are at least
\[
1+\left\lceil\frac{q^{n-1}-2}{4f(n-1)(4,q^n-\epsilon1)^2}\right\rceil.
\]
$\Aut(S)$-orbits of $p$-regular semisimple classes of
$S=P\Omega_{2n}^\pm(q)$.
\end{lemma}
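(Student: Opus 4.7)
The plan is to exhibit a maximal torus of $S$ of order roughly $q^{n-1}$, bound the number of its $p$-regular elements, and then count $\Aut(S)$-orbits on the resulting semisimple classes. This mirrors the argument already used for $\PSp_{2n}(q)$ in Lemma~\ref{lemma-kp'-symplectic} and its proof.

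Let $G$ be the simply connected cover of $S$ (of type $D_n$), so that $|Z(G)|$ divides $(4, q^n - \epsilon1)$. Starting from an orthogonal decomposition of the natural module $V_{2n} = V_{2n-2} \perp V_2$ into non-degenerate subspaces of types $(\sigma, \sigma\epsilon)$ for some $\sigma \in \{+,-\}$, we obtain a subgroup of type $\Omega^{\sigma}_{2n-2}(q) \times \Omega^{\sigma\epsilon}_2(q)$ of $G$. Inside the first factor we select the Coxeter-type cyclic torus $C$ of order (essentially) $q^{n-1} - \sigma$, sitting in some maximal torus $T$ of $G$. Since $p$ is odd and cannot divide both $q^{n-1}-1$ and $q^{n-1}+1$, I can choose $\sigma$ so that $p \nmid |C|$, making every element of $C$ automatically $p$-regular.

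Next, by the standard theory of maximal tori in finite groups of Lie type (\cite[Section 3.3]{Carter}), the relative Weyl group $N_G(T)/T$ controls fusion of $T$-elements in $G$; its action on $C$ factors through a cyclic group of order $2(n-1)$, namely the Coxeter-type Weyl action on the cyclic factor of a Coxeter torus of $\Omega^{\sigma}_{2n-2}(q)$. Hence $C$ yields at least $(|C|-1)/(2(n-1)) \geq (q^{n-1}-2)/(2(n-1))$ distinct nontrivial $p$-regular semisimple $G$-classes. Passing from $G$ to $S = G/Z(G)$ merges at most $|Z(G)| \leq (4, q^n-\epsilon1)$ of these classes into a single $S$-class, and taking $\Aut(S)$-orbits divides further by at most $2f \cdot (4, q^n-\epsilon1)$, accounting for field (order $f$), graph (order at most $2$, using $(n,\epsilon)\neq(4,+)$) and diagonal (order dividing $(4,q^n-\epsilon1)$) outer automorphisms. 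Combining these reductions yields the stated lower bound, and the extra $+1$ corresponds to the trivial orbit.

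The main obstacle is the structural verification: one must confirm that in both sign choices $\sigma = \pm$ the chosen maximal torus indeed has a cyclic direct factor of the stated order with relative Weyl action factoring through a cyclic group of order $2(n-1)$, and keep track carefully of the simply-connected/adjoint centre and of the diagonal outer automorphisms. The exclusion $(n,\epsilon)\neq(4,+)$ is essential here because triality would inflate $|\Out(S)|$ by a factor of $3$, breaking the clean form of the denominator $4f(n-1)(4,q^n-\epsilon1)^2$.
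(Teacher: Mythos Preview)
Your proposal is essentially the paper's own argument. The paper works in $G=\mathrm{Spin}^\epsilon_{2n}(q)$, picks a maximal torus containing a cyclic subgroup of order $q^{n-1}\pm 1$ with $p\nmid |T|$ (possible since $p$ is odd), invokes that the relative Weyl group of a Sylow $(n-1)$-torus has order $2(n-1)$ to get at least $(q^{n-1}-2)/(2(n-1))$ nontrivial semisimple $G$-classes, then divides by $|Z(G)|\le(4,q^n-\epsilon 1)$ to pass to $S$ and by $|\Out(S)|=2f(4,q^n-\epsilon 1)$ to pass to $\Aut(S)$-orbits. Your geometric realisation via the decomposition $V_{2n}=V_{2n-2}\perp V_2$ and the Coxeter torus of the $\Omega^{\sigma}_{2n-2}(q)$ factor is a concrete instantiation of exactly this torus, and your bookkeeping of centre, diagonal, field and graph automorphisms (using $(n,\epsilon)\neq(4,+)$ to exclude triality) matches the paper's. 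One cosmetic point: you call $G$ the simply connected cover but then describe the embedding at the level of the $\Omega$-groups; either work entirely in $\mathrm{Spin}$ or entirely in $\Omega$ to keep the centre-quotient step clean.
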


\begin{proof}
From the assumption on $p$, we know that $p$ does not divide both
$q^{n-1}-1$ and $q^{n-1}+1$. Let $T$ be a maximal torus of
$G:=\mathrm{Spin}^\epsilon_{2n}(q)$ of order $q^{n-1}\pm1$ such that
$p\nmid |T|$. The relative Weyl group of a Sylow $(n-1)$-torus has
order $2(n-1)$, and thus there are at least $(|T|-1)/(2(n-1))$
nontrivial semisimple classes of $G$ with representatives in $T$. It
follows that $S$ has at least
$(q^{n-1}-2)/(2(n-1)(4,q^n-\epsilon1))$ nontrivial $p$-regular
semisimple classes. The lemma now follows as
$|\Out(S)|=2f(4,q^n-\epsilon1)$.
\end{proof}

Now we are ready to prove Theorem
\ref{theorem-symplectic-orthogonal-group} for even-dimensional
orthogonal groups of rank at least 4. We recall
that $p\geq 5$ and $p\nmid q$.\\

%First we suppose that $p$ does not divide $q^{n}-\epsilon1$.
%Repeating the arguments in the proof of
%Lemma~\ref{lemma-orthogonal-bound} for a a maximal torus $T$ of
%$G:=\mathrm{Spin}^\epsilon_{2n}(q)$ of order $q^{n}-\epsilon1$, we
%obtain that there are at least \[
%\frac{q^{n}-2}{4fn(4,q^n-\epsilon1)^2}
%\] $\Aut(S)$-orbits of nontrivial $p$-regular
%semisimple classes of $S$.

A) Suppose that $p\mid (q^m\pm1)$ for some $m\leq n/2$. One can
check that the bound in Lemma~\ref{lemma-orthogonal-bound} is
greater than $2\sqrt{p-1}$ unless $n=5$ and $q=2, 3, 5, 7, 9$; $n=6$
and $q=2, 3, 5$; or $(n,q)=(7,2), (7,3), (8,2), (8,3)$.

For these exceptional cases, we will prove the desired bound by also
taking into account the unipotent classes of $S$. For instance,
when $(n,q)=(8,3)$ we have that $S$ has at least
$\sum_{i=0}^{4}{\bf p}(i){\bf p}_{0}(16-4i)=69$ unipotent classes by
Lemma~\ref{lemma-unipotent-class}(i), producing at least $\lceil
69/8\rceil=9$ orbits of $\Aut(S)$ on unipotent classes of $S$.
Together with at least 5 orbits on nontrivial $p$-regular semisimple
classes by Lemma~\ref{lemma-orthogonal-bound}, we obtain
$n(\Aut(S),\Cl_{p'}(S))\geq 14>2\sqrt{p-1}$ since $p\leq 41$. All
other cases are treated similarly, except when $(n,q)=(5,2), (5,3)$
or $(5,5)$. If $(n,q)=(5,2)$ or $(5,3)$ then $p$ must be $5$, but
$|S|$ is divisible by at least 4 different primes other than 5, and
thus we still have $n(\Aut(S),\Cl_{p'}(S))\geq 5>2\sqrt{p-1}$.
Finally if $(n,q)=(5,5)$ then $p\leq13$, but $|S|$ is divisible by
at least 7 different
primes, and we are done as well.\\

%When $(n,q)=(8,2)$ we have that $S$ has at least $p(7)=15$ unipotent
%classes, producing at least $\lceil 15/2\rceil=8$ $\Aut(S)$-orbits.
%Together with at least 5 orbits of nontrivial $p$-regular semisimple
%classes by Lemma~\ref{lemma-orthogonal-bound}, we have
%$n(\Aut(S),\Cl_{p'}(S))\geq 13>2\sqrt{p-1}$ since $p\leq 17$.
%
%When $(n,q)=(7,3)$ we have that $S$ has at least 42 unipotent
%classes, producing at least $\lceil 42/8\rceil=6$ $\Aut(S)$-orbits.
%Together with at least 2 orbits of nontrivial $p$-regular semisimple
%classes by Lemma~\ref{lemma-orthogonal-bound}, we have
%$n(\Aut(S),\Cl_{p'}(S))\geq 8>2\sqrt{p-1}$ since $p\leq 13$.
%
%When $(n,q)=(7,2)$ we have that $S$ has at least $p(6)=11$ unipotent
%classes, producing at least $\lceil 11/2\rceil=6$ $\Aut(S)$-orbits.
%Together with at least 3 orbits of nontrivial $p$-regular semisimple
%classes by Lemma~\ref{lemma-orthogonal-bound}, we have
%$n(\Aut(S),\Cl_{p'}(S))\geq 9>2\sqrt{p-1}$ since $p\leq 7$.
%
%Suppose $n=6$ and $q$ is odd. By Lemma~\ref{lemma-unipotent-class}
%we know that $P\Omega_{12}^\pm(q)$ has at least 28 unipotent
%classes, producing at least $4$ $\Aut(S)$-orbits. Remark that $p\leq
%13$ and $p\leq 31$ for $q=3$ and $5$, respectively.

 B) Suppose that $p$ does not divide $q^i\pm 1$
for every $i\leq n/2$. Let $m$ be minimal subject to the condition
$p\mid q^m\pm 1$. In particular, $m\geq 3$. Using \cite[Lemma
6.1]{Babai13}, we then know that the proportion of $p$-regular
elements in $S$ is at least the proportion of elements in $\Sy_n$
that have no cycles of length divisible by $m$. As the latter
proportion is $(m-1)/m$, we deduce that
\[
|S_{p'}|\geq \frac{m-1}{m}|S|>\frac{n-2}{n}|S|,
\]
where we recall that $S_{p'}$ denotes the set of $p$-regular
elements in $S$.

On the other hand, according to the proof of \cite[Theorem
6.13]{Fulman-Guralnick12}, the centralizer size of an element in
$\SO_{2n}^\pm(q)$ is at least
\[
q^n\left[ \frac{1-1/q}{2^re}\right]^{1/2},
\]
where \[r:=\min\{x\in \NN: \max\{4,\log_q(4n)\}\leq 2^x\}.\] Thus,
for every $g\in S$, we have
\[
|\bC_S(g)|\geq \frac{q^n(2,q^n-\epsilon1)}{2(4,q^n-\epsilon1)}\left[
\frac{1-1/q}{2^re}\right]^{1/2},
\]
and therefore
\[
k_{p'}(S)\geq
\frac{q^n(n-2)(2,q^n-\epsilon1)}{2n(4,q^n-\epsilon1)}\left[
\frac{1-1/q}{2^re}\right]^{1/2}
\]
for every prime $p$, since $|S_{p'}|\geq {(n-2)|S|}/{n}$. It follows
that
\[
n(\Aut(S),\Cl_{p'}(S))\geq
\frac{q^n(n-2)(2,q^n-\epsilon1)}{4fn(4,q^n-\epsilon1)^2}\left[
\frac{1-1/q}{2^re}\right]^{1/2}=:R(n,q)
\]
as $|\Out(S)|=2f(4,q^n-\epsilon1)$.

%Suppose that $q\geq 3$. Certainly $R(n,q)>2\sqrt{q^n}\geq
%2\sqrt{p-1}$ when $n\geq 21$. So we assume $n\leq 20$, in which case
%$\log_q(4n)<4$ and so $2^r=4$. Recall that $p\mid (q^n-\epsilon1)$.
%Therefore $p\leq \Phi_n(q)$ if $n$ is odd and $p\leq \Phi_{2n}(q)$
%if $n$ is even. Now one can check that $R(n,q)>2\sqrt{p-1}$ unless
%$(n,q)=(5,3), (5,5), (6,3)$, or $(7,3)$.
%
%Suppose that $q=2$.

Taking into account the maximum value of $p$ and assuming that $n
\geq 5$, we get $R(n,q) > 2 \sqrt{p-1}$ unless possibly if $n=5$ and
$q \in \{ 2, 3, 4, 5 \}$, %or $(n,q,\epsilon) = (5,7,-)$,
or $n\in\{6,7\}$ and $q \in \{ 2, 3 \}$, or $(n,q,\epsilon) =
(8,3,+)$, or $q = 2$ and $n\in \{ 8, 9, 10 \}$.

We now use various techniques to prove the inequality
$n(\Aut(S),\Cl_{p'}(S))>2\sqrt{p-1}$ for all these exceptions.

The case $(n,q)=(5,2)$ can be checked using \cite{Atl1}. For
$(n,q)=(5,4)$ we still have $R(n,q) > 2 \sqrt{p-1}$ unless
$p=257=4^4+1$, in which case there are at least $1+\lceil
(4^5-2)/20\rceil=52$ $\Aut(S)$-orbits on $p$-regular semisimple
classes with representatives in a maximal torus of order
$4^5-\epsilon 1$, and thus we are done.

For $(n,q)=(5,3)$ we have $p\leq 61$ and one can check from
\cite{Atl1} that $\mathrm{P\Omega}_{8}^\epsilon(3)$, and therefore
$\mathrm{P\Omega}_{10}^\epsilon(3)$ has more than $2\sqrt{p-1}$ different
element orders coprime to $p$, producing more than $2\sqrt{p-1}$
$\Aut(S)$-orbits on $p$-regular classes of $S$.

Next we suppose $(n,q)=(5,5)$. First we observe that the inequality
$R(n,q)>2\sqrt{p-1}$ still holds unless $p=521=(5^5+1)/6$, which
happens only when $\epsilon=-$, or $p=313=(5^4+1)/2$, which could
happen when either $\epsilon=+$ or $\epsilon=-$. Consider the
natural embeddings
\[
\mathrm{Spin}_6^+(5)\times \mathrm{Spin}_4^+(5)\subset
\mathrm{Spin}_{10}^+(5)
\]
and
\[
\mathrm{Spin}_6^-(5)\times \SL_2(25)\cong \mathrm{Spin}_6^-(5)\times
\mathrm{Spin}_4^-(5)\subset \mathrm{Spin}_{10}^-(5),
\]
which produce the embeddings
\[
\PSL_4(5)\times \PSL_2(5)\times \PSL_2(5)\subset P\Omega_{10}^+(5)
\]
and
\[
\PSU_4(5)\times \PSL_2(25)\subset P\Omega_{10}^-(5),
\]
respectively. Let us first consider $\epsilon=-$. Using the
information on element orders of $\PSU_4(5)$ and $\PSL_2(25)$ in
\cite{Atl1} and \cite{GAP}, we find that the set of all the numbers
of the form $ab$ where $a$ is an element order of $\PSU_4(5)$ and
$b$ is an element order of $\PSL_2(25)$ has cardinality greater than
46. Every element in this set is coprime to both 313 and 521 and is
certainly an element order of $S=P\Omega_{10}^-(5)$ by the above
embedding. It follows that
$n(\Aut(S),\Cl_{p'}(S))>46>2\sqrt{520}\geq 2\sqrt{p-1}$ as wanted.
The case $\epsilon=+$ works in the exact same way.

Now we consider the case $n\in\{6,7\}$ and $q=2$. If $p\neq
127=2^7-1$ then $p\leq 43$. Going through all the cyclic tori of
orders $2^k\pm 1$ which are divisors of $|S|$, we can find more than
$2\sqrt{p-1}$ different element orders, proving the bound. If
$p=127$ then of course $S=\Omega_7^+(2)$, and
Lemma~\ref{lemma-unipotent-class} shows that $S$ has at least
${\bf p}(7)+{\bf p}(3)=18$ nontrivial unipotent classes, producing at least $9$
$\Aut(S)$-orbits on them. Moreover we can find $13$ different orders
of semisimple elements of $S$. Indeed, there are at least three
classes of elements of order $31=2^5-1$ since fusion of classes in a
torus of order 31 are controlled by the relative Weyl group of order
10, and these three classes are contained in at least 2
orbits of $\Aut(S)$. Altogether, we have shown that
$n(\Aut(S),\Cl_{p'}(S))\geq 9+14=23>2\sqrt{126}=2\sqrt{127-1}$, as
desired.

The case of $(n,q)=(6,3)$ is handled similarly to the case
$(n,q)=(5,3)$ by bounding the number of different element orders.
One just examines all the element orders of $\mathrm{P\Omega}_{8}^\epsilon(3)$
using \cite{GAP} together with odd orders of elements in the cyclic
tori (of $\mathrm{Spin}_{12}^\epsilon(3)$) of orders $3^5\pm1$ and
$3^6-\epsilon 1$ to see that the number of different element orders
coprime to $p$ in $S$ is greater than $2\sqrt{p-1}$ for every $p\mid
|S|$.

Next we consider the case $(n,q,\epsilon)=(7,3,\pm)$ or $(8,3,+)$.
The trick in the previous paragraph still works unless
$p=547=(3^7+1)/4$ or $p=1093=(3^7-1)/2$. As in the case of
$(n,q)=(5,5)$, we consider the natural embeddings
\[
\mathrm{Spin}_8^+(3)\times \SL_4(3)\cong \mathrm{Spin}_8^+(3)\times
\mathrm{Spin}_6^+(3)\subset \mathrm{Spin}_{14}^+(3)\subset
\mathrm{Spin}_{16}^+(3)
\]
and
\[
\mathrm{Spin}_8^-(3)\times \SU_4(3)\cong \mathrm{Spin}_8^-(3)\times
\mathrm{Spin}_6^-(3)\subset \mathrm{Spin}_{14}^-(3),
\]
which lead to the embeddings
\[
\mathrm{P\Omega}_8^+(3)\times \PSL_4(3)\subset \mathrm{P\Omega}_{14}^+(3)\subset
\mathrm{P\Omega}_{16}^+(3)
\]
and
\[
\mathrm{P\Omega}_8^-(3)\times \PSU_4(3)\subset \mathrm{P\Omega}_{14}^-(3),
\]
respectively (see \cite[Lemma 2.5]{Nguyen}). We now use the
information on element orders of $P\Omega_8^\pm(3)$ and
$\PSL^\pm_4(3)$ (here $\PSL^-_4(3):=\PSU_4(3)$) in \cite{Atl1,GAP}
to find that the set of element orders of
$P\Omega_{14,16}^\epsilon(3)$ coprime to both 547 and 1093 is
greater than 67. It then follows that
$n(\Aut(S),\Cl_{p'}(S))>67>2\sqrt{1092}\geq 2\sqrt{p-1}$ as wanted.

For $(n,q)=(10,2)$ or $(9,2)$, one just uses Lemmas
\ref{lemma-unipotent-class} and \ref{lemma-orthogonal-bound} to
obtain the desired bound. For $(n,q)=(8,2)$ the same trick works
unless $p=127=2^7-1$ or $p=257=2^8+1$ and $\epsilon=-$. If $p=127$
there are at least $\lceil (2^8)/32\rceil+ \lceil 2^7/28\rceil=13$
$\Aut(S)$-orbits on nontrivial $p$-regular semisimple classes with
representatives in the two tori of coprime orders $2^8-\epsilon1$
and $2^7+1$, and at least $\lceil({\bf p}(7)+{\bf p}(5)+{\bf p}(3))/2\rceil=13$
$\Aut(S)$-orbits on nontrivial unipotent classes, and hence
$$n(\Aut(S),\Cl_{p'}(S))\geq 13+13+1=27>2\sqrt{126}=2\sqrt{p-1}.$$ If
$p=257$ we consider all the nontrivial classes with representatives
in the tori of orders $2^7\pm1, 2^6\pm1$ and $2^5-1$. These orders
are pairwise coprime and so the classes are different and the total
number of $\Aut(S)$-orbits on them is at least
\[
\left\lceil \frac{2^7}{28}\right\rceil+\left\lceil
\frac{2^7-2}{28}\right\rceil+\left\lceil
\frac{2^6}{24}\right\rceil+\left\lceil
\frac{2^6-2}{24}\right\rceil+\left\lceil
\frac{2^5-2}{20}\right\rceil=18.
\]
On the other hand there are at least 13 orbits on nontrivial
unipotent classes, as estimated above, and another orbit on the
classes of elements of order $2^4+1=17$, implying that
$n(\Aut(S),\Cl_{p'}(S))\geq 18+13+1+1=33>2\sqrt{256}=2\sqrt{p-1}$.

The proof of Theorem \ref{theorem-symplectic-orthogonal-group} is
completed by the following two lemmas.

\begin{lemma}\label{lemma-POmega8+}
Theorem \ref{theorem-symplectic-orthogonal-group} holds for
$S=\mathrm{P\Omega}_{8}^+(q)$
\end{lemma}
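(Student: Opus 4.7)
The plan is to follow closely the template used in Subsection~\ref{subsection-orthogognal-even-dimension} for the even-dimensional orthogonal groups of rank at least $5$, but with two structural adjustments to accommodate $S=\mathrm{P\Omega}_8^+(q)$: the outer automorphism group is inflated by triality to have order $6f\cdot(4,q^4-1)$, and the rank is only $4$, so the general asymptotic estimates on semisimple classes must be supplemented by explicit numerical input in many small cases. First I would dispose of small $q$, roughly $q\in\{2,3,4,5,7\}$, by direct inspection using \cite{Atl1} together with \cite{GAP}; for each such $q$ and each prime $p$ dividing $|S|$ one simply counts $\Aut(S)$-orbits of $p$-regular classes.

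For larger $q$, I would split by the size of the prime $p$, mirroring parts A and B of the earlier argument. In part A, when $p\mid q^m\pm 1$ for some $m\le 2$, I would choose a maximal torus of $\mathrm{Spin}_8^+(q)$ whose order is coprime to $p$ (for example one of order $(q^3-\epsilon 1)(q-\epsilon 1)$ or $(q^2-1)(q^2+1)$), use the fact that fusion inside such a torus is controlled by the corresponding relative Weyl group, and produce a lower bound for $k_{p'}(S)$ analogous to Lemma~\ref{lemma-orthogonal-bound}. Dividing by $|\Out(S)|=6f(4,q^4-1)$ and adding the unipotent contribution from Lemma~\ref{lemma-unipotent-class} (which gives at least $10$ unipotent classes for $q$ odd and at least $7$ for $q$ even) will settle all but a short finite list of pairs $(q,p)$, to be handled by embeddings such as $\mathrm{Spin}_6^{\pm}(q)\times\mathrm{Spin}_2^{\pm}(q)\hookrightarrow\mathrm{Spin}_8^+(q)$ and inspection of element orders. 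In part B, when $p$ is a primitive prime divisor of $q^m-1$ with $m\in\{3,4,6\}$ (so in particular $m\ge 3$), I would invoke \cite[Lemma 6.1]{Babai13} to conclude that the proportion of $p$-regular elements in $S$ is at least $(m-1)/m\ge 2/3$, combined with the centralizer lower bound from \cite[Theorem 6.13]{Fulman-Guralnick12}. This yields
\[
n(\Aut(S),\Cl_{p'}(S))\ \ge\ \frac{q^{4}\cdot(m-1)/m}{12 f(4,q^4-1)^{2}}\left[\frac{1-1/q}{2^{r}e}\right]^{1/2},
\]
and since $p-1\le q^4$, this succeeds for all sufficiently large $q$. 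The residual exceptions where $p$ is close to $q^4+1$ (for instance primitive prime divisors of $q^8-1$) would be treated by producing many distinct element orders from the embeddings $\mathrm{Spin}_6^\pm(q)\hookrightarrow\mathrm{Spin}_8^+(q)$ together with the existence of cyclic tori of orders $q^3\pm 1$, $q^2\pm 1$, and $q^4-1$ inside $S$, as was done for the $(n,q)=(8,2)$ case earlier in the section.

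The main obstacle is the triality outer automorphism: it can fuse up to three distinct $\PGO_8^+(q)$-classes into a single $\Aut(S)$-orbit, which inflates $|\Out(S)|$ by a factor of $3$ compared to $P\Omega_{2n}^\epsilon(q)$ for $n\ge 5$. Combined with the low rank $r=4$, this makes the naive semisimple bound from Lemma~\ref{lemma-orthogonal-bound} miss the target $2\sqrt{p-1}$ by a narrow margin in many cases, so the unipotent contribution from Lemma~\ref{lemma-unipotent-class} and the embedding arguments will be essential to close the gap. A secondary nuisance is that the diagonal factor $(4,q^4-1)$ depends on $q\bmod 4$, so the final numerical verification splits into several subcases according to the parity of $q$ and the congruence class of $q$ modulo $4$.
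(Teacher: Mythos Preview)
Your plan matches the paper's proof closely: the same two-case split according to which cyclotomic factor $p$ divides, the same combination of the proportion estimate from \cite[Lemma 6.1]{Babai13} with the Fulman--Guralnick centralizer bound for the larger primes, a torus-counting argument in the spirit of Lemma~\ref{lemma-orthogonal-bound} for the smaller primes, and explicit verification for small $q$.

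One factual correction, however: $\Phi_8(q)=q^4+1$ does \emph{not} divide $|\mathrm{P\Omega}_8^+(q)|$. The order factors as
\[
|S|=q^{12}\Phi_1(q)^4\Phi_2(q)^3\Phi_3(q)\Phi_4(q)^2\Phi_6(q)/(4,q^4-1),
\]
so your remarks about ``primitive prime divisors of $q^8-1$'' and ``$p$ close to $q^4+1$'' are vacuous for the plus-type group (they arise only for $\mathrm{P\Omega}_8^-(q)$, treated in Lemma~\ref{lemma-POmega8-}). In particular $p\le\Phi_3(q)=q^2+q+1$ throughout, not $p-1\le q^4$. This sharper bound on $p$ is exactly what makes the asymptotic estimate succeed for all $q>9$ in the paper, leaving only $q\in\{2,3,4,5,7,9\}$ to be handled directly (by listing element orders via \cite{Atl1,GAP} and the embedding $\mathrm{P\Omega}_6^+(q)\hookrightarrow S$); your weaker bound would force a longer list of exceptions and the extra embedding arguments you sketch for ``$p$ near $q^4+1$'' are unnecessary.
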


\begin{proof}
Note that
\[|S|=q^{12}\Phi_1(q)^4\Phi_2(q)^3\Phi_3(q)\Phi_4(q)^2\Phi_6(q)/(4,q^4-1).\]
First we suppose that $p$ divides $\Phi_3(q)$, $\Phi_4(q)$ or
$\Phi_6(q)$. According to \cite[Lemma 6.1]{Babai13}, the proportion
of $p$-regular elements in $S$ is then at least the proportion of
elements in $\Sy_4$ with no cycles of length divisible by $2$, which
is $13/24$. As above, we get
\[
n(\Aut(S),\Cl_{p'}(S))\geq \frac{13q^4}{288f(2,q-1)^3}\left[
\frac{1-1/q}{4e}\right]^{1/2}.
\]
The assumption on $p$ guarantees that this bound is greater than
$2\sqrt{p-1}$, unless $q\in\{2,3,4,5,7,9\}$.

If $q=2$, then $n(\Aut(S), \mathrm{Cl}_{p'}(S)) \geq 24 > 2
\sqrt{p-1}$ by \cite{GAP}, as $p \leq 7$. Let $q = 3$. Since $p \geq
5$, we have $p \in \{ 5, 7, 13 \}$. The number of different orders
of elements in $S$ which are coprime to $p$ is at least $12$ by
\cite{GAP}. Thus $n(\Aut(S), \mathrm{Cl}_{p'}(S)) \geq 12 > 2
\sqrt{13-1}$. Let $q=4$. The set of prime divisors of the order of
$S$ is $\{ 2, 3, 5, 7, 13, 17 \}$. Thus $n(\Aut(S),
\mathrm{Cl}_{p'}(S)) \geq 6$. This forces $p \in \{ 13, 17 \}$.
Using random searches by \cite{GAP} one finds that $S$ contains
elements of orders $15$, $21$, $30$. It follows that $n(\Aut(S),
\mathrm{Cl}_{p'}(S)) \geq 9 > 2\sqrt{p-1}$. Let $q = 5$. The set of
prime divisors of $|S|$ is $\{ 2, 3, 5, 7, 13, 31 \}$. Thus
$n(\Aut(S), \mathrm{Cl}_{p'}(S)) \geq 6$. Assume that $p \in \{ 13,
31 \}$. Using random searches by \cite{GAP}, one finds that $S$
contains elements of orders $10$, $62$, $63$ from which we obtain
the desired bound for $p = 13$, and elements of orders $10$, $26$,
$39$, $63$, $156$, from which we get the bound for $p = 31$. Let $q
= 7$. The set of prime divisors of $|S|$ is $\{ 2, 3, 5, 7, 19, 43
\}$. Thus $n(\Aut(S), \mathrm{Cl}_{p'}(S)) \geq 6$. Assume that $p
\in \{ 19, 43 \}$. Using random searches by \cite{GAP}, one finds
that $S$ contains elements of orders $16$, $25$, $168$, $600$, from
which the desired bound follows for $p = 19$, and $16$, $24$, $25$,
$57$, $168$, $171$, $600$, from which we get the bound for $p = 43$.
Let $q = 9$. The set of prime divisors of $|S|$ is $\{ 2, 3, 5, 7,
13, 41, 73 \}$. Thus $n(\Aut(S), \mathrm{Cl}_{p'}(S)) \geq 7$.
Assume that $p \in \{ 41, 73 \}$. Since $\mathrm{GO}_{6}^{+}(9)
\times \mathrm{GO}_{2}^{+}(9)$ is a subgroup of
$\mathrm{GO}_{8}^{+}(9)$, there is a subgroup of $S$ isomorphic to
$\mathrm{P\Omega}_{6}^{+}(9)\cong \PSL_4(9)$. Using random searches
by \cite{GAP}, one sees that $\mathrm{P\Omega}_{6}^{+}(9)$ (and so
$S$) contains elements of orders $9$, $16$, $20$, $24$, $40$, $60$,
$80$, $91$, $182$, from which the desired bound follows for $p =
41$, and an additional element of order $205$ from which the case $p
= 73$ follows.

Next we suppose that $p$ divides $\Phi_1(q)$ or $\Phi_2(q)$. Arguing
similarly as in Lemma~\ref{lemma-orthogonal-bound}, we then have
that $S$ has at least
\[
1+\left\lceil\frac{q^3-2}{36f(2,q-1)^4}\right\rceil
\]
$\Aut(S)$-orbits on its $p$-regular semisimple classes. This bound
is greater than $2\sqrt{p-1}$ unless $q\in \{2,3,4,5,7,9,11\}$. For
all these exceptions, we have $p\leq 5$ as $p\mid (q^2-1)$, and the
bound easily holds as $|S|$ has at least $5$ different prime
divisors.
\end{proof}

\begin{lemma}\label{lemma-POmega8-}
Theorem \ref{theorem-symplectic-orthogonal-group} holds for
$S=\mathrm{P\Omega}_{8}^-(q)$
\end{lemma}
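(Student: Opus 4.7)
The plan is to mirror the proof of Lemma~\ref{lemma-POmega8+} with appropriate modifications for the twisted case. Write
\[
|S| = \frac{1}{(4,q^4+1)} q^{12} \Phi_1(q)^3 \Phi_2(q)^3 \Phi_3(q) \Phi_4(q) \Phi_6(q) \Phi_8(q),
\]
so the odd primes $p$ dividing $|S|$ split according to which $\Phi_i(q)$ with $i \in \{1,2,3,4,6,8\}$ they divide, and $|\Out(S)| = 2f(4,q^4+1)$.

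First I would handle the case when $p$ divides $\Phi_3(q)\Phi_4(q)\Phi_6(q)\Phi_8(q)$. Here the minimal $m$ with $p \mid q^m\pm 1$ satisfies $m \in \{3,4,6,8\}$, so \cite[Lemma 6.1]{Babai13} guarantees that the proportion of $p$-regular elements in $S$ is at least the proportion of elements in the Weyl group of $D_4^-$ having no cycle of length divisible by $m \geq 3$; this is bounded below by an absolute constant $c>0$ (take the worst case). Combined with the centralizer bound from the proof of \cite[Theorem 6.13]{Fulman-Guralnick12}, namely $|\bC_S(g)| \geq \frac{q^4(2,q^4+1)}{2(4,q^4+1)}\bigl[(1-1/q)/(2^r e)\bigr]^{1/2}$ for $r = \min\{x\in\NN : \max\{4,\log_q 16\} \leq 2^x\}$, this yields
\[
n(\Aut(S),\Cl_{p'}(S)) \geq \frac{c\, q^4}{4f(4,q^4+1)^2}\bigl[(1-1/q)/(2^r e)\bigr]^{1/2},
\]
which exceeds $2\sqrt{p-1} \leq 2\sqrt{q^4}$ for all sufficiently large $q$, leaving a short list of small $q \in \{2,3,4,5,7,9\}$.

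Second, for $p$ dividing $\Phi_1(q)\Phi_2(q)$, I would run the torus argument used in Lemma~\ref{lemma-orthogonal-bound}: pick a maximal torus $T$ of $\Spin_8^-(q)$ of order $q^3+1$ or $q^3-1$ coprime to $p$, whose relative Weyl group has order $6$, to produce at least $1+\lceil (q^3-2)/(12f(4,q^4+1)^2)\rceil$ $\Aut(S)$-orbits on $p$-regular semisimple classes. This bound exceeds $2\sqrt{p-1}$ except for very small $q$, and for those exceptions $p \leq 5$ while $|S|$ has at least five distinct prime divisors, so the bound follows trivially.

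The main obstacle, as in Lemma~\ref{lemma-POmega8+}, will be the finitely many small-$q$ exceptions: for each $q \in \{2,3,4,5,7,9\}$ and each prime $p$ dividing $|S|$, I would combine a count of prime divisors of $|S|$ (giving at least that many $p$-regular classes of elements of prime order) with random searches in \cite{GAP}, or by exploiting the subgroup embedding $\PSU_4(q)\cong P\Omega_6^-(q) \hookrightarrow P\Omega_8^-(q)$ (coming from $\mathrm{GO}_6^-(q)\times \mathrm{GO}_2^+(q) \leq \mathrm{GO}_8^-(q)$) to produce enough distinct element orders coprime to $p$; this reduces to the already-handled case of $\PSU_4(q)$ (Lemma~\ref{PSUneven} and Proposition~\ref{proposition-linear-unitary}). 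The largest primitive primes (those dividing $\Phi_8(q)$, e.g.\ $p=257$ when $q=4$) will require the explicit torus-orbit count $1+\lceil (q^4+1-1)/(8f(4,q^4+1))\rceil$ from a torus of order $(q^4+1)/(4,q^4+1)$ with relative Weyl group of order $8$, which gives enough orbits to beat $2\sqrt{p-1}$.
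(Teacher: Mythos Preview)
Your overall structure follows the paper's approach, but there is a genuine error in the final step. When $p\mid\Phi_8(q)$, you propose to use a maximal torus of order $(q^4+1)/(4,q^4+1)$ to produce $\Aut(S)$-orbits of $p$-regular semisimple classes. But $p$ divides this torus order, so the nontrivial elements of the torus are largely (in several cases entirely) $p$-singular, not $p$-regular. For instance, at $q=4$ the torus has order $257=p$, so every nontrivial element has order $p$; your count $1+\lceil q^4/(8f)\rceil$ produces zero $p$-regular orbits beyond the identity. The same collapse occurs at $q\in\{2,3,5,7\}$ where $(q^4+1)/(4,q^4+1)$ is $p$ or $2p$.

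This is not a detail to patch: the pair $(S,p)=(\Omega_8^-(4),257)$ is precisely the exception in Theorem~\ref{theorem-symplectic-orthogonal-group}, where only $n(\Aut(S),\Cl_{p'}(S))\geq 2\sqrt{p-1}$ is asserted, not strict inequality. Your claim to ``beat $2\sqrt{p-1}$'' here contradicts the statement you are proving. The paper handles these small-$q$, large-$p$ cases not by a torus of order $q^4+1$ but by extensive \cite{GAP} searches for element orders (for $q=4$ finding exactly $32$ orders coprime to $257$, which is why the case is left as a possible exception) and, for $q=9$, by exploiting both $\mathrm{P\Omega}_6^+(9)$ and $\mathrm{P\Omega}_6^-(9)$ as subgroups. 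You need to replace the faulty torus argument with an analogous direct verification, and you must acknowledge the $(4,257)$ exception rather than claim strict inequality there.
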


\begin{proof}
The proof goes along similar lines as in that of
Lemma~\ref{lemma-POmega8+}. Note that
\[|S|=q^{12}\Phi_1(q)^3\Phi_2(q)^3\Phi_3(q)\Phi_4(q)\Phi_6(q)\Phi_8(q)/(4,q^4+1).\]
The case $p$ divides $\Phi_i(q)$ for $i=1,2,3,4$, or $6$ is proved
similarly as we did for $\mathrm{P\Omega}_8^+(q)$. So suppose that $p\mid
\Phi_8(q)$ but $p\nmid \Phi_i(q)$ for all $i=1,2,3,4,6$. According
to \cite[Lemma 6.1]{Babai13}, the proportion of $p$-regular elements
in $S$ is then at least $3/4$. As above, we get
\[
n(\Aut(S),\Cl_{p'}(S))\geq
\frac{3q^4(2,q^4+1)}{16f(4,q^4+1)^2}\left[
\frac{1-1/q}{4e}\right]^{1/2}.
\]
Note that $p\leq q^2+1$ as $p\mid \Phi_8(q)$, and thus this bound is
greater than $2\sqrt{p-1}$ as we wish, unless $q\in\{2,3,4,5,7,9\}$.

If $q = 2$, then $5 \leq p \leq 17$ and $n(\Aut(S),
\mathrm{Cl}_{p'}(S)) \geq 30$ by Gap \cite{GAP}. Let $q = 3$. The
set of prime divisors of the order of $S$ is $\{ 2, 3, 5, 7, 13, 41
\}$. Thus $n(\Aut(S), \mathrm{Cl}_{p'}(S)) \geq 6$. We may assume
that $p \in \{ 13, 41 \}$. Element orders of $S$ include by
\cite{GAP} every positive integer at most $10$ and also $12$, $14$,
$15$. It follows that $n(\Aut(S), \mathrm{Cl}_{p'}(S)) \geq 13 > 2
\sqrt{p-1}$. Let $q = 4$. The set of prime divisors of $|S|$ is $\{
2, 3, 5, 7, 13, 17, 257 \}$. Thus $n(\Aut(S), \mathrm{Cl}_{p'}(S))
\geq 7$. We may assume that $p \in \{ 17, 257 \}$. Random searches
using \cite{GAP} show that element orders of $S$ include $39$ and
$60$. This gives the desired bound for $p = 17$. Assume that $p =
257$. Again by \cite{GAP} we find elements in $S$ of orders at most
$10$ and $12$, $13$, $15$, $17$, $20$, $21$, $30$, $34$, $35$, $39$,
$45$, $51$, $60$, $63$, $65$, $85$, $102$, $105$, $170$, $195$,
$255$, $315$. Thus $n(\Aut(S), \mathrm{Cl}_{p'}(S)) \geq 2
\sqrt{p-1}$.  Note that we do not have strict inequality here and
this possible exception is listed in Table~\ref{table-exceptions}.

Let $q = 5$. The set of prime divisors of $|S|$ is $\{ 2, 3, 5, 7,
13, 31, 313 \}$. Thus $n(\Aut(S), \mathrm{Cl}_{p'}(S)) \geq 7$. We
may assume that $p \in \{ 31, 313 \}$. A random search using
\cite{GAP} shows that there is an element of order $104$ in $S$ and
thus also elements of orders $4$, $8$, $26$, $52$. We may thus
assume that $p = 313$. Several random searches give element orders
$24$, $12$, $36$, $60$, $15$, $30$, $6$, $20$, $252$, $84$, $42$,
$21$, $126$, $63$, $9$, $130$, $65$, $260$, $124$, $62$, $93$,
$372$, $186$, $312$. From this we obtain $n(\Aut(S),
\mathrm{Cl}_{p'}(S)) > 2 \sqrt{313-1}$. Let $q = 7$. The set of
prime divisors of $|S|$ is $\{ 2, 3, 5, 7, 19, 43, 1201 \}$. This
gives $n(\Aut(S), \mathrm{Cl}_{p'}(S)) \geq 7$ and so $p \in \{ 19,
43, 1201 \}$. Further element orders of $S$ (larger than $1$ and
different from a prime) include $4$ and $8$, from which the desired
bound follows for $p = 19$. Even further element orders are $10$,
$20$, $40$, $50$, from which the case $p = 43$ is completed. Many
\cite{GAP} searches give a total of $70$ different element orders in
$S$ each of which is coprime to $1201$. This finishes the proof of
the bound for $p = 1201$.

Finally let $q = 9$. The set of prime divisors of $|S|$ is $\{ 2, 3,
5, 7, 13, 17, 41, 73, 193 \}$. It follows that $n(\Aut(S),
\mathrm{Cl}_{p'}(S)) \geq 9$ and so $p \in \{ 41, 73, 193 \}$. As in
the proof of Lemma \ref{theorem-symplectic-orthogonal-group}, the
group $\mathrm{P\Omega}_{6}^{+}(9)$ may be viewed as a subgroup of
$S$. Element orders in $\mathrm{P\Omega}_{6}^{+}(9)$ include $4$,
$8$, $16$, $91$, $182$. The desired bound follows for $p = 41$.
Further element orders in $\mathrm{P\Omega}_{6}^{+}(9)$ are $205$,
$15$, $40$. The bound follows for $p = 73$. Further element orders
in $\mathrm{P\Omega}_{6}^{+}(9)$ include $6$, $10$, $12$, $20$,
$24$, $26$, $30$, $60$, $80$. The group
$\mathrm{P\Omega}_{6}^{-}(9)$ is also a subgroup of $S$ and two of
its element orders are $328$ and $365$. From these we get the
desired bound for $p = 193$.
\end{proof}

%%%%%%%%%%%%%%%%%%%%%%%%%%%%%%%%%%%%%%%%%%%%%%%%%%%%%%%%%%%%%%%%%%%%

\section{Theorem \ref{proposition-simple-groups}: Exceptional
groups}\label{section-exceptional}

In this section $S$ will be a simple exceptional group of Lie type,
but not the Tits group. Then $S$ is of the form $G/\bZ(G)$, where
$G=\GC^F$ is the set of fixed points of a simple algebraic group of
simply connected type, under a Frobenius endomorphism $F$ associated
to the field of $q=\ell^f$ elements, where $\ell$ is a prime. Let
$r$ be the rank of $\GC$, which we also call the rank of $S$. We
then have $\bC_{G}(g)\geq (q-1)^r$ for all $g\in G$, which implies
that $\bC_{S}(g)\geq (q-1)^r/|\bZ(G)|$ for all $g\in S$.

Let $\Phi_n(q)$ denote the value of the $n$-cyclotomic polynomial at
$q$ and let $p(S)$ be the maximal prime divisor of $S$. Also, let
$\Phi_4^{\pm}(q):=q\pm \sqrt{2q}+1$ for $q=2^{2m+1}$,
$\Phi_6^{\pm}(q):=q\pm \sqrt{3q}+1$ for $q=3^{2m+1}$, and
$\Phi_{12}^{\pm}(q):=q^2\pm \sqrt{2q^3}+q\pm \sqrt{2q}+1$ for
$q=2^{2m+1}$. Using the order formulas of $S$ \cite{Atl1}, one can
find an upper bound for $p(S)$, which we record in
Table~\ref{table-upper-bound-p(s)}. We observe that in all the cases
$p(S)\leq \Psi_S(q)$ for a polynomial $\Psi_S$ of degree at most
$r(S)$. In fact, $\deg(\Psi_{n(S)})= r$ in all cases except
$S=E_7(q)$.

\begin{table}[ht]
\caption{Upper bounds for $p(S)$.\label{table-upper-bound-p(s)}}
\begin{center}
\begin{tabular}{cc}
\hline
$S$  & upper bound for $p(S)$ \\
\hline
$\ta B_2(q), q=2^{2m+1}$ & $\Phi^+_4(q)$\\
 $G_2(q)$ & $\Phi_3(q)$\\
 $\ta G_2(q), q=3^{2m+1}$& $\Phi^+_6(q)$\\
 $F_4(q)$&$\Phi_8(q)$\\
 $\ta F_4(q), q=2^{2m+1}$& $\Phi^+_{12}(q)$\\
 $\tb D_4(q)$& $\Phi_{12}(q)$\\
 $E_6(q)$& $\Phi_9(q)$\\
 $\ta E_6(q)$ & $\Phi_{18}(q)$\\
 $E_7(q)$ & $\Phi_{7}(q)$\\
 $E_8(q)$& $\Phi_{30}(q)$\\
 \hline
\end{tabular}
\end{center}
\end{table}

We recall from Section~\ref{section-some-generalities} that a
subgroup $T$ of $G$ is said to be strongly self-centralizing if
$\bC_G(t)=T$ for every $1\neq t\in T$. It turns out that every group
of exceptional types other than $E_7(q)$ has one or more strongly
self-centralizing torus, as worked out in \cite{Babai-Palfy-Saxl09}.
This information is collected in
Table~\ref{table-self-centralizing-tori} for convenient reference.\\

We now examine each family of groups separately.\\

\begin{table}[ht]
\caption{Strongly self-centralizing maximal tori of exceptional
groups.\label{table-self-centralizing-tori}}
\begin{center}
\begin{tabular}{cccc}
\hline
$S$  & conditions& $|T|$& $|\bN_S(T)/T|$ \\
\hline
$\ta B_2(q)$ &$q=2^{2m+1}$ & $\Phi^\pm_4(q)$& 4\\
 $G_2(q)$& \begin{tabular}{l} $q\not\equiv 1(\bmod 3)$\\ $q\not\equiv 2(\bmod 3)$
 \end{tabular} & \begin{tabular}{l} $\Phi_3(q)$\\ $\Phi_6(q)$
 \end{tabular} & 6\\

$\ta G_2(q)$& $q=3^{2m+1}$ & $\Phi_6^\pm(q)$ &$6$\\

$F_4(q)$ & & $\Phi_{12}(q)$ & 12\\

$\ta F_4(q)$ & $q=2^{2m+1}$ & $\Phi_{12}^\pm(q)$ &$12$\\

$\tb D_4(q)$ & & $\Phi_{12}(q)$ & 4\\

$E_6(q)$ & & $\Phi_{9}(q)/(3,q-1)$ & 9\\

$\ta E_6(q)$ & & $\Phi_{18}(q)/(3,q+1)$ & 9\\

$E_8(q)$ & & \begin{tabular}{l} $\Phi_{24}(q)$\\ $\Phi_{15}(q)$\\
$\Phi_{30}(q)$ \end{tabular} & \begin{tabular}{l} 24\\ 30\\
30
 \end{tabular}\\
\hline
\end{tabular}
\end{center}
\end{table}

1) $S=\ta B_2(q)$ with $q=2^{2m+1}\geq 2^3$. We will assume that
$m\geq 3$ as information on $\ta B_2(8)$ and $\ta B_2(32)$ are
available in \cite{Atl1}. As seen in
Table~\ref{table-self-centralizing-tori}, $S$ contains two strongly
self-centralizing tori $T_1$ and $T_2$ of order $\Phi_4^\pm(q)$ such
that $|\bN_S(T_i)/T_i|=4$. Assume first that $p$ divides either
$\Phi_4^+(q)$ or $\Phi_4^-(q)$. We then have $|S_{p'}|> {3|S|}/{4}$
by Lemma~\ref{lemma-strongly-self-centralizing}(i). Therefore,
\[
k_{p'}(S)> \frac{3(q-1)}{4},
\]
which implies that
\[
n(\Aut(S), \Cl_{p'}(S))>\frac{3(q-1)}{4(2m+1)},
\]
as $|\Out(S)|=2m+1$. We observe that
$$3(q-1)/(4(2m+1))>2\sqrt{\Phi_4^+(q)-1}\geq 2\sqrt{p-1}$$ for all
$q\geq 2^9$. So we are done unless $q=2^7$. In fact, when $q=2^7$ we
still have $3(q-1)/(4(2m+1))>2\sqrt{p-1}$ unless $p=113$.

Next we assume $p\nmid \Phi_4(q)=\Phi_4^+(q)\cdot\Phi_4^-(q)$, which
means that $p\mid 2(q-1)$. By
Lemma~\ref{lemma-strongly-self-centralizing}(ii), we have $
|S_{p'}|> {2|S|}/{5}$. Therefore, $k_{p'}(S)> {2(q-1)}/{5}$, and it
follows that $n(\Aut(S), \Cl_{p'}(S))>{2(q-1)}/{5(2m+1)}$. As $p\leq
q-1$, we deduce that $n(\Aut(S), \Cl_{p'}(S))>2\sqrt{p-1}$ for
$q\geq 2^{13}$. Indeed, we have $p\leq 89$ when $q=2^{11}$ and
$p\leq 73$ when $q=2^9$ and thus the desired bound still holds in
those cases. We are left with $(S,p)=(\ta B_2(2^7),127)$.

Let $S = \ta B_2(2^7)$. We have $|\Out(S)| = 7$. Suzuki
\cite{Suzuki} proved that $S$ has $2^{7}+3$ conjugacy classes. The
trivial element of $S$ forms a single $\Aut(S)$-orbit. The group $S$
has $1$ class of involutions. It has $2$ classes of elements of
order $4$ accounting for $2$ $\Aut(S)$-orbits. The group $S$ has
$63$ conjugacy classes of elements of order $127$ accounting for at
least $9$ $\Aut(S)$-orbits. It has $28$ conjugacy classes of
elements of order $113$ forming at least $4$ orbits. There are $36$
classes of elements in a cyclic torus of order $145$, where $1$ of
them is for elements of order $5$, $7$ of them for elements of order
$29$, and $28$ of them for elements of order $145$. These give at
least $6$ orbits. Adding all these together we have $n(\Aut(S),
\mathrm{Cl}_{p'}(S)) \geq 14$ for the pair $(S,p)=(\ta
B_2(2^7),127)$ and $n(\Aut(S), \mathrm{Cl}_{p'}(S)) \geq 19$ for the
pair $(S,p)=(\ta B_2(2^7),113)$, as appearing in
Table~\ref{table-exceptions}. We also note that for these two
exceptions, $n(\Aut(S), \mathrm{Cl}_{p'}(S))\geq 14>2(p-1)^{1/4}$
and $n(\Aut(S),\Cl_p(S)\cup
\mathrm{Cl}_{p'}(S))=23>2\sqrt{p-1}$.\\

2) $S=G_2(q)$ with $q=\ell^f\geq 3$. We will assume that $q\geq 7$
as the cases $q=3, 4, 5$ are available in \cite{Atl1}. First we
consider $p\mid \Phi_3(q)$. If $q\not\equiv 1 \pmod 3$ then $S$ has
a strongly self-centralizing torus $T$ of order $\Phi_3(q)$ such
that $|\bN_S(T)/T|=6$. Lemma~\ref{lemma-strongly-self-centralizing}
then implies that $|S_{p'}|>5|S|/6$. Thus $n(\Aut(S),
\Cl_{p'}(S))>5(q-1)^2/{(6fg)}$ for $g=2$ if $\ell=3$ and $g=1$
otherwise. Therefore $n(\Aut(S), \Cl_{p'}(S))>
2\sqrt{\Phi_3(q)-1}\geq 2\sqrt{p-1}$ unless $q=9$, but in this
exceptional case $p$ is at most $13$ and the bound still follows.

So assume that $q\equiv 1 \pmod 3$. Then $S$ has a maximal torus of
order $\Phi_6(q)$ with the relative Weyl group of order 6 as above,
implying that there are $(q^2-q)/6$ classes of $S$ with
representatives being nontrivial elements in this torus. Consider
another torus of order $(q+1)^2$ with the relative Weyl group of
order 12, we find another $(q^2+2q)/12f$ nontrivial different
classes. We now have \[k_{p'}(S)\geq
1+\frac{q^2-q}{6}+\frac{q^2+2q}{12},\] and thus
\[
n(\Aut(S), \Cl_{p'}(S))\geq 1+\frac{q^2-q}{6f}+\frac{q^2+2q}{12f}.
\]
This bound is greater than $2\sqrt{\Phi_3(q)-1}\geq 2\sqrt{p-1}$,
and hence we are done, unless $q=7$. When $q=7$ we have $p= 19$ and
the above bound is still greater than $2\sqrt{p-1}$.

The case $p\mid \Phi_6(q)$ is similar, so suppose that $p\nmid
\Phi_3(q)\Phi_6(q)$. Then by
Lemma~\ref{lemma-strongly-self-centralizing} we have $n(\Aut(S),
\Cl_{p'}(S))>(q-1)^2/{(7fg)}$ for $g=2$ if $\ell=3$ and $g=1$
otherwise, but now $p\leq q+1$. One can check that
$(q-1)^2/{(7fg)}>2\sqrt{q}\geq 2\sqrt{p-1}$ unless $q=7$ or $9$. But
in those cases we have $p\leq 7$ or $5$, respectively, and hence we
still have $n(\Aut(S), \Cl_{p'}(S))>
2\sqrt{p-1}$.\\

3) $S=\ta G_2(q)$ with $q=3^{2m+1}\geq 3^3$. We assume $m\geq 2$ as
the case $\ta G_2(27)$ can be confirmed directly using \cite{Atl1}.
We know that $S$ contains two strongly self-centralizing tori $T_1$
and $T_2$ of order $\Phi_6^\pm(q)$ such that $|\bN_S(T_i)/T_i|=6$.
Assuming that $p$ divides either $\Phi_6^+(q)$ or $\Phi_6^-(q)$, we
have $k_{p'}(S)> {5(q-1)}/{6},$ which implies that
\[
n(\Aut(S), \Cl_{p'}(S))>\frac{5(q-1)}{6(2m+1)}.
\]
Now we observe that $5(q-1)/(6(2m+1))>2\sqrt{\Phi_6^+(q)-1}\geq
2\sqrt{p-1}$ for all $q\geq 3^5$.

Next we assume $p\nmid \Phi_6(q)=\Phi_6^+(q)\cdot\Phi_6^-(q)$, which
yields $p\mid 3(q^2-1)$. By Lemma
\ref{lemma-strongly-self-centralizing}(ii), we have $ |S_{p'}|>
{2|S|}/{7}$. Therefore, $k_{p'}(S)> {2(q-1)}/{7}$, implying that
$n(\Aut(S), \Cl_{p'}(S))>{2(q-1)}/{7(2m+1)}$. As $p\leq (q-1)/2$, we
deduce that $n(\Aut(S), \Cl_{p'}(S))>2\sqrt{p-1}$ for $q\geq 3^7$.
When $q=3^5$ we in fact have $p\leq
61$ and thus the desired bound also holds.\\

4) $S=F_4(q)$ with $q=\ell^f$. Note that $S$ has a strongly
self-centralizing torus $T$ of order $\Phi_{12}(q)$ and
$|\bN_S(T)/T|=12$. Therefore, if $p\mid \Phi_{12}(q)$, we have
\[n(\Aut(S), \Cl_{p'}(S))>\frac{11(q-1)^4}{12f(2,q)},\] which is larger than
$2\sqrt{\Phi_{12}(q)-1}\geq 2\sqrt{p-1}$ unless $q=2$ or $3$. When
$q=2$, there are at least $18$ $\Aut(S)$-orbits on unipotent classes
of $S$ while $p\leq 17$ by \cite{Atl1}, so we are done. When $q=3$
then $p=\Phi_{12}(3)=73$, and
$n(\Aut(S), \Cl_{p'}(S))=k_{p'}(S)$. %%>11\cdot 2^4/12>14
%$ and so
%$n(\Aut(S), \Cl_{p'}(S))\geq 15$. Moreover, $|S_{p}|> |S|/13$ by
%Lemma~\ref{lemma-strongly-self-centralizing}(iii) and thus
%$n(\Aut(S), \Cl_{p}(S))>2^4/13>1$, which implies that $n(\Aut(S),
%\Cl_{p}(S))\geq 2$.
 Note that $G=S$ has a maximal torus of order $\Phi_8(q)=82$ with the
relative Weyl group of order 8, so we have at least 11 classes of
elements in this torus. Similarly, there are at least
$(\Phi_3(q)^2-1)/24=168/24=7$ classes of nontrivial elements in a
maximal torus of order $\Phi_3(q)^2$. We now have $n(\Aut(S),
\Cl_{p}(S)\cup \Cl_{p'}(S))\geq 18>2\sqrt{73-1}$, as wanted.

So we assume $p\nmid \Phi_{12}(q)$. Then $S$ has at least
$(q^4-q^2)/12$ nontrivial classes with representatives in a maximal
torus of order $\Phi_{12}(q)$. It follows that \[n(\Aut(S),
\Cl_{p'}(S))>1+\left\lceil\frac{q^4-q^2}{12f(2,q)}\right\rceil.
\] Note that $p\leq
\Phi_8(q)=q^4+1$ and therefore we are done unless $q= 2, 3, 4, 8$.
The cases $q=2$ or $3$ can be handled as above. When $q=8$ we have
$p\leq 241$ and thus $1+{(q^4-q^2)}{(12f(2,q))}$ is still greater
than $2\sqrt{p-1}$. When $q=4$,
we also have the desired bound unless $p=4^4+1=257$.

Suppose $(S,p)=(F_4(4),257)$. From the Dynkin diagrams, we know that
the groups $\PSL_3(4) \times \PSL_3(4)$, $\SU_3(4) \times \SU_3(4)$,
and $\Sp_6(4)$ are all sections of $S$, and thus every element
orders of these groups are element orders of $S$. Moreover every
element order of $F_4(2)$ is also an element order of $S$. Using
\cite{Atl1,GAP} to examine the element orders of these smaller
groups, we find that they altogether have way more than
$32=2\sqrt{p-1}$ different element
orders coprime to $p=257$, proving the theorem in this case.\\

5) $S=\ta F_4(q)$ with $q=2^{2m+1}\geq 8$. We know that $S$ contains
two strongly self-centralizing tori $T_1$ and $T_2$ of order
$\Phi_{12}^\pm(q)$ such that $|\bN_S(T_i)/T_i|=12$. Assume that $p$
divides either $\Phi_{12}^+(q)$ or $\Phi_{12}^-(q)$. Then
$k_{p'}(S)> {11(q-1)^2}/{12}$, implying that
\[
n(\Aut(S), \Cl_{p'}(S))>\frac{11(q-1)^2}{12(2m+1)}.
\]
One can check that
$11(q-1)^2/(12(2m+1))>2\sqrt{\Phi_{12}^+(q)-1}\geq 2\sqrt{p-1}$ for
all $q\geq 8$ and we are done.

Next we assume $p\nmid
\Phi_{12}(q)=\Phi_{12}^+(q)\cdot\Phi_{12}^-(q)$. Then $p\mid
q\Phi_1(q)\Phi_2(q)\Phi_4(q)\Phi_6(q)$ and hence $p\leq q^2+1$. We
now have $|S_{p'}|> {2|S|}/{13}$. Therefore, $k_{p'}(S)>
{2(q-1)^2}/{13}$, which implies that $n(\Aut(S),
\Cl_{p'}(S))>{2(q-1)^2}/{13(2m+1)}$. We deduce that $n(\Aut(S),
\Cl_{p'}(S))>2\sqrt{p-1}$ for $q\geq 2^7$. Indeed, we have $p\leq
61$ and $p\leq 19$ when $q=2^5$ and $2^3$,
respectively, and thus we are also done in these cases.\\

6) $S=\tb D_4(q)$ with $q=\ell^f$. We assume $q\geq 3$ as the case
$\tb D_4(2)$ is available in \cite{Atl1}. We know that $S$ has a
strongly self-centralizing tori $T$ of order $\Phi_{12}(q)$ such
that $|\bN_S(T)/T|=4$. Assume first that $p\mid \Phi_{12}(q)$. Then
$n(\Aut(S), \Cl_{p'}(S))>{3(q-1)^4}/{(12f)}$. It then follows that
$n(\Aut(S), \Cl_{p'}(S))>2\sqrt{p-1}$ unless $q=3$ or $4$. For $q=3$
we have $p=\Phi_{12}(3)=73$ and note that as $C_7\times \SU_3(3)$
and $C_{13}\times \SL_3(3)$ are sections of $S$, we can find more
than $17>2\sqrt{72}$ different element orders coprime to $73$ in
$S$. For $q=4$ we have $p=\Phi_{12}(4)=241$, and we can find by
\cite{GAP} and using the paper \cite{Kleidman} at least
$31>2\sqrt{p-1}$ different element orders from the subgroups $\tb
D_4(2)$, $G_2(4)$, $C_{13}\times \SU_3(4)$, $\PGL_3(4)$, $\PSL_2(64)
\times \PSL_2(4)$ and $C_{21} \circ \SL_{3}(4)$ of $S$.

Next assume that $p\nmid \Phi_{12}(q)$. Then $p\mid
q\Phi_1(q)\Phi_2(q)\Phi_3(q)\Phi_6(q)$ and thus $p\leq q^2+q+1$. Now
there are at least $(q^4-q^2)/4$ nontrivial classes with
representatives in a maximal torus of order $\Phi_{12}(q)$, and
therefore $n(\Aut(S), \Cl_{p'}(S))>1+{(q^4-q^2}/{(12f)}$, which is
larger
than $2\sqrt{q^2+q}\geq 2\sqrt{p-1}$ for all $q\geq 3$.\\

7) $S=E_6(q)$ with $q=\ell^f$. We again assume $q\geq 3$ as the case
$E_6(2)$ is available in \cite{Atl1}. We know that $S$ has a strongly
self-centralizing tori $T$ of order $\Phi_{9}(q)/(3,q-1)$ such that
$|\bN_S(T)/T|=9$. Assume that $p\mid |T|$. Recall that $\bC_S(g)\geq
(q-1)^6/(3,q-1)$ for all $g\in S$ and $|\Out(S)|=2f(3,q-1)$. Now we
have
\[n(\Aut(S), \Cl_{p'}(S))>\frac{4(q-1)^6}{9f(3,q-1)^2}.\]
It turns out that $4(q-1)^6/9f(3,q-1)^2>2\sqrt{p-1}$ unless $q=3$
or $4$. When $q=4$ we have $p\leq 73$ so the bound $n(\Aut(S),
\Cl_{p'}(S))>2\sqrt{p-1}$ still holds. When $q=3$ the only prime we
need to check is $p=757=\Phi_9(3)$. So let $(S,p)=(E_6(3),757)$. The
union of the set of prime divisors of $|E_6(3)|$ with the set of
element orders of the sections $\PSL_{6}(3)$, $\PSL_{3}(3) \times
\PSL_{3}(3) \times \PSL_{3}(3)$, $\PSL_{3}(27)$ and $\PSL_{5}(3)
\times \PSL_{2}(3)$ consists of $47$ integers none of which is
divisible by $757$. The group $\mathrm{P\Omega}_{10}^{+}(3)$ is also
a section of $E_{6}(3)$ and by finding orders of random elements in
$\mathrm{P\Omega}_{10}^{+}(3)$ using \cite{GAP}, we may obtain $8$
extra integers, namely $21$, $35$, $45$, $70$, $82$, $84$, $90$ and
$164$, apart from the $47$ previously found. Finally, $55 > 2
\sqrt{757-1}$.

Suppose $p\nmid |T|$, and so we have $p\leq q^4+1$. Let $G$ be the
extension of $S$ be diagonal automorphisms. Then $G$ has a maximal
torus of order $\Phi_{9}(q)=q^6+q^3+1$ with the relative Weyl group
of order 9. Therefore there are at least $(q^6+q^3)/9$ nontrivial
classes with representatives in that torus. Therefore $G$ has at
least $(q^6+q^3)/9$ nontrivial $p$-regular classes, and thus, by the
orbit counting formula,
\[n(\Aut(S), \Cl_{p'}(S))\geq1+\frac{q^6+q^3}{18f(3,q-1)},\] which is
larger
than $2q^2\geq 2\sqrt{p-1}$ for all $q\geq 3$.\\

8) $S=\ta E_6(q)$ with $q=\ell^f$. We assume $q\geq 3$ as the case
$E_6(2)$ is available in \cite{Atl1}. From
Table~\ref{table-self-centralizing-tori}, we know that $S$ has a
strongly self-centralizing torus $T$ of order $\Phi_{18}(q)/(3,q+1)$
such that $|\bN_S(T)/T|=9$. Suppose that $p\mid |T|$. Then
\[n(\Aut(S),
\Cl_{p'}(S))\geq\frac{k_{p'}(S)}{2f(3,q+1)}>\frac{4(q-1)^6}{9f(3,q+1)^2}\geq
2\sqrt{\Phi_{18}(q)/(3,q+1)-1}\] unless $q=3$. When $q=3$ we have
$p=19$ or $37$ and the bound $n(\Aut(S), \Cl_{p'}(S))>2\sqrt{p-1}$
still holds.

Next we assume $p\nmid |T|$. Then $p\leq q^4+1$. As in the case of
$E_6(q)$, we get
\[n(\Aut(S), \Cl_{p'}(S))\geq1+\frac{q^6-q^3}{18f(3,q+1)},\] which is
larger than $2q^2\geq 2\sqrt{p-1}$ for all $q\geq 3$.\\

9) $S=E_7(q)$ with $q=\ell^f$. As the case $q=2$ is available in
\cite{Atl1}, we assume that $q\geq 3$. This is the only family of
exceptional groups that do not always possess a strongly
self-centralizing maximal torus. It was shown in \cite[Theorem
3.4]{Babai-Palfy-Saxl09} that the proportion of $p$-regular elements
in $S$ is at least $1/15$, for every prime $p$. Therefore
\[n(\Aut(S), \Cl_{p'}(S))>\frac{(q-1)^7}{15f(2,q-1)^2}=:R(q).\]

Recall from Table~\ref{table-upper-bound-p(s)} that $p\leq
\Phi_7(q)=(q^7-1)/(q-1)$ and one can check that $R(q)>
2\sqrt{\Phi_7(q)-1}$ for all $q\geq 5$. When $q=4$ we observe that
the largest prime divisor of $S$ is 257 and hence the inequality
$R(q)> 2\sqrt{p-1}$ still holds. For $q=3$ the bound is also good
unless $p=757=\Phi_9(3)$ or $1093=\Phi_7(3)$. The case of
$(S,p)=(E_7(3),757)$ follows from the case $(S,p)=(E_6(3),757)$ we
already examined above. Finally let $(S,p)=(E_7(3), 1093)$. We know
that $E_6(3)$, and hence $E_7(3)$, has at least 55 different element
orders coprime to 757 as well as 1093. On the other hand elements in
a maximal torus of $(E_7)_{ad}(3)$ of order
$\Phi_9(3)\Phi_1(3)=2\cdot 757$ are controlled by its relative Weyl
group of order 18, implying that there are at least 42 classes of
elements of order 757 with representatives in this torus, which in
turn produce at least 21 $\Aut(S)$-orbits on those classes. We now
have at least $55+21 > 2\sqrt{1092}$, orbits of $\Aut(S)$ on
$p$-regular classes of $S$.\\

%%%%%%%%%%%%%%%%%%%%%%%%%%%%%%%%%%%%%%%%%%%%%%%%%%%%%%%%%%

10) $S=E_8(q)$ with $q=\ell^f\geq 3$ as the case $q=2$ can be
checked directly using \cite{Atl1}. First assume that $p\mid
\Phi_{15}(q)$ or $p\mid \Phi_{30}(q)$. Since $S$ contains two
strongly self-centralizing maximal tori $T_1$ and $T_2$ of order
$\Phi_{15}(q)$ and $\Phi_{30}(q)$ such that $|\bN_S(T_i)/T_i|=30$,
we obtain $k_{p'}(S)> 29(q-1)^8/30$ and thus
\[n(\Aut(S),\Cl_{p'}(S))\geq \frac{29(q-1)^8}{30f},\] which turns out to be
greater than $2\sqrt{\Phi_{30}(q)-1}\geq 2\sqrt{p-1}$ for all $q\geq
3$.

Next we suppose $p\mid \Phi_{24}(q)$. As $S$ contains one strongly
self-centralizing maximal torus $T$ of order $\Phi_{24}(q)$ such
that $|\bN_S(T)/T|=24$, we deduce that $k_{p'}(S)> 23(q-1)^8/24$ and
thus $n(\Aut(S),\Cl_{p'}(S))\geq 23(q-1)^8/(24f)$, which again is
greater than $2\sqrt{q^8-q^4}\geq 2\sqrt{p-1}$ for all $q\geq 3$.

Lastly we assume $p\nmid \Phi_{15}(q)\Phi_{24}(q)\Phi_{30}(q)$. Then
$p\leq \Phi_7(q)=(q^7-1)/(q-1)$. By
Lemma~\ref{lemma-strongly-self-centralizing} we get
\[
|S_{p'}|>\left(\frac{1}{25}+\frac{1}{31}+\frac{1}{31}\right)|S|=\frac{81}{775}|S|.
\]
Thus $k_{p'}(S)> 81(q-1)^8/775$ and hence
$n(\Aut(S),\Cl_{p'}(S))\geq 81(q-1)^8/(775f)$. One can check that
$81(q-1)^8/(775f)> 2\sqrt{\Phi_7(q)-1}$ for all $q\geq 4$. For the
last case $q=3$, we note that $S$ has at least $\Phi_{30}(3)/30$
conjugacy classes with representatives in the maximal torus of order
$\Phi_{30}(3)$, which implies that
$$n(\Aut(S),\Cl_{p'}(S))=k_{p'}(S)>2\sqrt{1092}\geq 2\sqrt{p-1}$$ for
every prime divisor of $|S|$ not dividing
$\Phi_{15}(3)\Phi_{24}(3)\Phi_{30}(3)$.

We have finished the proof of
Theorem~\ref{proposition-simple-groups}.

%\begin{theorem} Let $S_r(q)$ be a simple group of Lie type of rank
%$r$ defined over a field of $q$ elements. Let
%$n(\Aut(S),\Cl_{p'}(S))$ denote the number of $\Aut(S)$-orbits on
%$p$-regular classes of $S$ for $p$ a prime. Then
%\[
%\frac{n(\Aut(S),\Cl_{p'}(S))}{q^{r-1}} \rightarrow \infty \text{
%when } |S|\rightarrow \infty.
%\]
%\end{theorem}

\section{Bounding the number of $p$-regular classes in finite groups of Lie type}
\label{section-Lie-type-bound}

The following is Theorem~\ref{theorem-p-regular-bound-Lietype} in
the introduction.

\begin{theorem}\label{theorem-p-regularagain}
Let $S$ be a simple group of Lie type defined over the field of $q$
elements with $r$ the rank of the ambient algebraic group. We have
\[
k_{p'}(S)> \frac{q^r}{17r^2}
\]
for every prime $p$.
\end{theorem}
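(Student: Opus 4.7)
The plan is to split into two regimes based on whether $p$ equals the defining characteristic $\ell$ of $S$, and in each regime extract the required inequality from results already proved in the earlier sections.

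In the defining characteristic case, the argument in the proof of Lemma~\ref{lemma-defining-characteristic} applies directly. Writing $S = G/\bZ(G)$ with $G = \GC^F$ of simply connected type, Carter's theorem yields $q^r$ semisimple---hence $p$-regular---classes in $G$, and passage to $S$ costs a factor of at most $|\bZ(G)| = d$. Since in every case $d \leq r+1$ (for instance $d = (n, q-1)$ for $\PSL_n(q)$, $(n, q+1)$ for $\PSU_n(q)$, $d \leq 4$ for the remaining classical types, and $d \leq 3$ for exceptional types), one has $k_{p'}(S) \geq q^r/(r+1) > q^r/(17r^2)$ for all $r \geq 1$.

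In the non-defining characteristic case $p \neq \ell$, the strategy is to invoke the bounds on $n(\Aut(S), \Cl_{p'}(S))$ established in Sections~\ref{section-linear-unitary}, \ref{section-symplectic-orthogonal}, and \ref{section-exceptional}, together with the trivial inequality $k_{p'}(S) \geq n(\Aut(S), \Cl_{p'}(S))$. For linear and unitary groups of rank $n - 1 \geq 3$, Lemma~\ref{lemma-linear-unitary-bound} and Proposition~\ref{proposition-linear-unitary} give bounds of order $q^{n-1}/(n^2 f \log_q n)$, comfortably above $q^{n-1}/(17(n-1)^2)$. For symplectic and orthogonal groups one combines the partition-based unipotent counts of Lemmas~\ref{lemma-kp'-symplectic} and \ref{lemma-unipotent-class} with the torus-based semisimple counts of Lemma~\ref{lemma-orthogonal-bound}. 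For exceptional groups, the strongly self-centralizing torus arguments of Section~\ref{section-exceptional}, organised around Table~\ref{table-self-centralizing-tori}, furnish lower bounds polynomial in $q$ of degree approximately $r$. In each instance the comparison with $q^r/(17r^2)$ reduces to a routine polynomial inequality once the case is isolated.

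The hard part will be ensuring uniformity of the constant $17$. This is governed by the finitely many small-rank, small-$q$ exceptions appearing in Table~\ref{table-exceptions} and their symplectic, orthogonal, and exceptional analogues discussed in Sections~\ref{section-symplectic-orthogonal} and \ref{section-exceptional}. For each such pair $(S, p)$ the right-hand side $q^r/(17r^2)$ is very small---often less than $1$---and a direct check from the explicit values of $k_{p'}(S)$ already tabulated in those sections, supplemented by \cite{Atl1, GAP} where necessary, confirms the inequality. The constant $17$ is then simply the smallest integer simultaneously validating these finitely many base cases and the asymptotic estimates above.
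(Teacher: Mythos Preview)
Your approach has a genuine gap: routing through the inequality $k_{p'}(S) \geq n(\Aut(S),\Cl_{p'}(S))$ loses too much, because the bounds on $n(\Aut(S),\Cl_{p'}(S))$ established in Sections~\ref{section-linear-unitary}--\ref{section-exceptional} all carry a factor of $|\Out(S)|$ in the denominator, and $|\Out(S)|$ contains the field-automorphism contribution $f$. Since $f = \log_\ell q$ is not bounded by any function of the rank $r$ alone, a bound of the shape $q^r/(Cr^2 f)$ is \emph{not} in general at least $q^r/(17r^2)$. Concretely, for $S = G_2(2^{100})$ one has $r=2$, $f=100$, and the Section~\ref{section-exceptional} estimate gives roughly $n(\Aut(S),\Cl_{p'}(S)) > 5(q-1)^2/(6f) \approx q^2/120$, which is smaller than $q^2/(17\cdot 4) = q^2/68$. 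The same issue arises for linear and unitary groups: your claim that Lemma~\ref{lemma-linear-unitary-bound} gives something ``comfortably above $q^{n-1}/(17(n-1)^2)$'' is false whenever $f$ exceeds about $17$. (There is a secondary issue: Lemma~\ref{lemma-linear-unitary-bound} has a hypothesis on $p$ that is not satisfied for all primes.)

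The paper avoids this by bounding $k_{p'}(S)$ directly rather than $n(\Aut(S),\Cl_{p'}(S))$. The point is that the proofs in Sections~\ref{section-linear-unitary}--\ref{section-exceptional} proceed by first establishing a lower bound for $k_{p'}(S)$ (combining a minimal-centralizer-size estimate with a proportion-of-$p$-regular-elements estimate) and only then dividing by $|\Out(S)|$ to reach $n(\Aut(S),\Cl_{p'}(S))$. For Theorem~\ref{theorem-p-regularagain} one extracts the intermediate $k_{p'}(S)$ bound, which is free of $f$: for exceptional groups these take the form $c(q-1)^r$ with $c$ depending only on $r$; for symplectic and odd orthogonal groups Lemma~\ref{lemma-kp'-symplectic} applies directly; for even orthogonal and for linear/unitary groups one couples the centralizer bound from the proof of Lemma~\ref{lemma-linear-unitary-bound} (respectively \cite[Theorem 6.13]{Fulman-Guralnick12}) with the uniform proportion bound $|S_{p'}|/|S| \geq 1/(r+1)$ (respectively $1/(4r)$) from \cite{Babai13, Babai-Palfy-Saxl09}, valid for all $p$ in non-defining characteristic. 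Your defining-characteristic paragraph is fine and matches the paper.
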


\begin{proof} The theorem is known in the case $p\nmid |S|$, as we
already mentioned in the introduction that $k(S)>q^r/d$ where $d$ is
the order of the group of diagonal automorphisms of $S$ and the
values of $d$ for various groups are known, see \cite{Atl1} for
instance. We are also done in the case $p$ is the defining
characteristic of $S$, by using the same arguments as in the proof
of Lemma~\ref{lemma-defining-characteristic}. For $S$ an exceptional
group, the theorem follows from our work in
Section~\ref{section-exceptional}. Here we note that all the bounds
obtained are of the form $c(q-1)^r$ where $c$ is a constant
depending on the rank $r$ only. It is then straightforward to check
that $c(q-1)^r>q^r/(17r^2)$ for all types and all $q>2$. The case
$q=2$ can be proved by a direct check using \cite{Atl1,GAP}.

Suppose $S=\PSp_{2r}(q)$ or $\Omega_{2r+1}(q)$. The case of odd $p$
follows from Lemma~\ref{lemma-kp'-symplectic}. When $p=2$ similar
arguments as in the proof of Lemma~\ref{lemma-kp'-symplectic} apply,
with the remark that either $(q^r-1)/2$ or $(q^r+1)/2$ is odd (when
$q$ is odd), and hence the bound is $k_{p'}(S)>q^r/8r>q^r/(17r^2)$.

Let $S=\mathrm{P\Omega}_{2r}^\pm(q)$. From
Subsection~\ref{subsection-orthogognal-even-dimension} we know that
the minimum centralizer size of an element in $S$ is at least
\[
\frac{q^r(2,q^r-\epsilon1)}{2(4,q^r-\epsilon1)}\left[
\frac{1-1/q}{2^ke}\right]^{1/2},
\]
where $k:=\min\{x\in \NN: \max\{4,\log_q(4r)\}\leq 2^x\}$. On the
other hand, By \cite[Theorem 1.1]{Babai-Palfy-Saxl09}, the
proportion of $p$-regular elements in $S$ is at least $1/4r$. We
deduce that
\[
k_{p'}(S)\geq \frac{q^r(2,q^r-\epsilon1)}{8r(4,q^r-\epsilon1)}\left[
\frac{1-1/q}{2^ke}\right]^{1/2},
\]
which is greater than $q^r/17r^2$ for all possible values of
$q\geq2$ and $r\geq4$.

Finally let $S=\PSL_{r+1}^\epsilon(q)$ for $\epsilon=\pm$. From the
proof of Lemma~\ref{lemma-linear-unitary-bound}, we know that the
minimum centralizer size of an element in $S$ is at least
\[
\frac{q^rH(r,q,\epsilon)}{(r+1,q-\epsilon1)},
\]
where \[H(r,q,+)=\frac{1}{ek}\] with $k:=\min\{x \in\NN: x\geq
\log_q(r+2)\}$ and
\[H(r,q,-)=\left(\frac{q^2-1}{ek'(q+1)^2}\right)^{1/2}\] with
$k':=\min\{x\in \NN: x \text{ odd and } x\geq \log_q(r+2)\}$.
Moreover, by \cite[Theorem 1.1]{Babai13}, the proportion of
$p$-regular elements in $S$ is at least $1/(r+1)$ (note that $p\nmid
q$). We deduce that
\[
k_{p'}(S)\geq \frac{q^rH(r,q,\epsilon)}{(r+1)(r+1,q-\epsilon1)}.
\]
It is straightforward to check that this bound is again larger than
$q^r/(17r^2)$ for all possible $q$ and $r$.
\end{proof}

We remark that it is possible to prove that $k_{p'}(S)>
{q^r}/{(12r^2)}$ for every $S$ but the estimates are a lot more
tedious. Also, when $S$ is an even-dimensional orthogonal group,
there is an explicitly computed constant $c>0$ such that
$k_{p'}(S)>cq^r/r$. By following the proof of
Theorem~\ref{theorem-p-regularagain}, we therefore have:

\begin{theorem}
Let $S$ be a simple group of Lie type defined over the field of $q$
elements with $r$ the rank of the ambient algebraic group. Suppose
that $S$ is not linear or unitary. There exists a universal constant
$c>0$ such that
\[
k_{p'}(S)> \frac{cq^r}{r}
\]
for every prime $p$.
\end{theorem}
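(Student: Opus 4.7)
The plan is to assemble the theorem directly from the bounds established in the proof of Theorem~\ref{theorem-p-regularagain}, together with a slightly sharper estimate for the even-dimensional orthogonal family. The key observation is that only the linear and unitary groups intrinsically force the extra factor of $r$ into the denominator; in the other families a sharper proportion estimate or a direct torus count is available.

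For $S$ an exceptional group of Lie type, the rank $r$ is at most $8$, so every bound of the form $c_r(q-1)^r$ derived in Section~\ref{section-exceptional} is already at least $c q^r/r$ for a suitable universal constant $c$, after handling the finitely many cases with $q\leq 4$ directly from the character tables in \cite{Atl1,GAP}. For $S = \PSp_{2r}(q)$ or $S = \Omega_{2r+1}(q)$ with $p$ odd, Lemma~\ref{lemma-kp'-symplectic} already gives $k_{p'}(S) \geq \lceil (q^r-2)/(4r)\rceil > q^r/(8r)$; the case $p = 2$ with $q$ even is the defining-characteristic situation already handled in Theorem~\ref{theorem-p-regularagain}, and the case $p = 2$ with $q$ odd follows from the same torus argument applied to the odd part of $q^r\pm 1$, one of which is congruent to $2\pmod 4$ and therefore has odd part of size at least $(q^r-1)/2$.

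The substantive case is $S = \mathrm{P\Omega}_{2r}^\pm(q)$, where the proof of Theorem~\ref{theorem-p-regularagain} only produced $q^r/(17r^2)$ because it combined the universal $1/(4r)$ proportion bound of Babai--P\'alfy--Saxl with a minimum-centralizer estimate of Fulman--Guralnick that carries an extraneous logarithmic factor. I would replace that step by a direct count of $p$-regular semisimple classes in a well-chosen maximal torus. For each odd prime $p$, select signs $\epsilon_1,\epsilon_2\in\{\pm 1\}$ so that both $q^{r-1}-\epsilon_1$ and $q-\epsilon_2$ are coprime to $p$; this is possible because $p$ divides at most one member of each pair. The corresponding cyclic maximal torus $T$ of $\mathrm{Spin}_{2r}^\pm(q)$ has order $(q^{r-1}-\epsilon_1)(q-\epsilon_2)$ with relative Weyl group of order $2(r-1)$, so the elements of $T$ produce at least $(|T|-1)/(2(r-1))$ semisimple classes in the simply connected cover, descending to at least $(|T|-1)/(8(r-1))$ classes of $S = G/\bZ(G)$. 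This yields $k_{p'}(S) > c q^r/r$.

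The main obstacle I anticipate is the boundary case $p = 2$ with $q$ odd, in which no choice of signs above makes $|T|$ odd. Here I would fall back on counting $2$-regular semisimple classes through the Weyl group: the proportion of odd-order elements in $W(D_r)$ is bounded below by a universal multiple of $1/\sqrt{r}$, and via the standard Deligne--Lusztig identification of semisimple classes of $G$ with $F$-conjugacy classes of pairs (maximal torus, element) this supplies at least a universal constant times $q^r/\sqrt{r}$ many $2$-regular semisimple classes of $S$, which exceeds $c q^r/r$. Collecting the explicit constants from all four families then produces the single universal $c > 0$ claimed.
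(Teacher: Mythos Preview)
Your overall strategy matches what the paper intends: the paper's own ``proof'' is literally the one-line remark that the even-dimensional orthogonal case admits an explicitly computed constant with $k_{p'}(S)>cq^r/r$, and that all other families are already handled by the proof of Theorem~\ref{theorem-p-regularagain}. So your treatment of the exceptional, symplectic, and odd-orthogonal families is exactly what the paper has in mind.

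The gap is in your torus argument for $S=\mathrm{P\Omega}_{2r}^\epsilon(q)$. You cannot choose $\epsilon_1,\epsilon_2\in\{\pm1\}$ independently: a maximal torus of $\mathrm{Spin}_{2r}^\epsilon(q)$ associated to the signed partition $(r-1,1)$ with signs $(\epsilon_1,\epsilon_2)$ exists only when $\epsilon_1\epsilon_2=\epsilon$. This constraint can force both admissible tori to have order divisible by $p$. For instance, take $\epsilon=-$, $r$ even, and an odd prime $p\mid q+1$; then $q^{r-1}\equiv -1\pmod p$, so the two available tori have orders $(q^{r-1}-1)(q+1)$ and $(q^{r-1}+1)(q-1)$, and $p$ divides the first via $q+1$ and the second via $q^{r-1}+1$. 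The analogous failure occurs for $\epsilon=+$, $r$ odd, $p\mid q+1$. The fix is routine---in each bad case a torus of a different shape (for example the Coxeter torus of order $q^r-\epsilon$, whose relative Weyl group has order at most $2r$) is coprime to $p$---but as written the step ``this is possible because $p$ divides at most one member of each pair'' is incorrect.

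Your $p=2$, $q$ odd argument is also too sketchy to stand as is. The assertion that the proportion of odd-order elements in $W(D_r)$ is at least $c/\sqrt{r}$ is not obvious (the passage from $\Sy_r$ to $W(D_r)$ introduces a factor $2^{-k(\sigma)}$, $k(\sigma)$ the number of cycles, which must be controlled), and the link you invoke between that proportion and the number of $2$-regular semisimple classes via Deligne--Lusztig theory needs to be made precise. A cleaner route is again via explicit tori: for $q$ odd one can always locate a maximal torus whose odd part has size at least a constant times $q^r$ (e.g.\ combining factors $q^{a}+1$ and $q^{b}-1$ with $a,b$ chosen so that the $2$-parts are bounded), and then argue exactly as in Lemma~\ref{lemma-orthogonal-bound}.
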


%%%%%%%%%%%%%%%%%%%%%%%%%%%%%%%%%%%%%%%%%%%%%%%%%%%%%%%%%%%%%%%%%%%%%%%%%%%%

\section{p-regular and p'-regular conjugacy
classes}\label{section-p-regular-class}

In this section we prove Theorem~\ref{theorem-general-bound-class}.

We start with an easy observation.

\begin{lemma}\label{lemma-normal-subgroup} Let $G$ be a finite
group and $N \unlhd G$. Then $k_{p}(G/N) \leq k_{p}(G)$ and
$k_{p'}(G/N) \leq k_{p'}(G)$.
\end{lemma}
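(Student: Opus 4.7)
The plan is to build, in each case, an explicit injection from the set of classes in $G/N$ to the set of classes in $G$, using the canonical $p$-decomposition of an element. Recall that any element $x$ of a finite group has a unique decomposition $x = x_p x_{p'} = x_{p'} x_p$, where $x_p$ (the $p$-part) and $x_{p'}$ (the $p'$-part) are commuting powers of $x$, with $x_p$ a $p$-element and $x_{p'}$ a $p'$-element. This decomposition is preserved under group homomorphisms: for any homomorphism $\varphi\colon G\to H$ one has $\varphi(x)_p=\varphi(x_p)$ and $\varphi(x)_{p'}=\varphi(x_{p'})$.

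Let $\pi\colon G\to G/N$ be the natural projection. For the bound $k_{p'}(G/N)\le k_{p'}(G)$, I would argue as follows. Given a class $\bar C\in\Cl_{p'}(G/N)$ with representative $\bar g$, choose any lift $g\in G$ with $\pi(g)=\bar g$ and set $h:=g_{p'}$. Then $h$ is $p$-regular in $G$ and $\pi(h)=\pi(g)_{p'}=\bar g_{p'}=\bar g$, because $\bar g$ is $p$-regular and hence coincides with its own $p'$-part. Sending $\bar C\mapsto [h]_G$ is well defined on classes (any other representative of $\bar C$ differs by conjugation, which commutes with taking the $p'$-part), and if two classes $\bar C_1\ne\bar C_2$ in $\Cl_{p'}(G/N)$ produced the same class $[h]_G$, their representatives would be $G$-conjugate and hence their images $\bar g_1,\bar g_2$ would be $G/N$-conjugate, a contradiction. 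So we obtain an injection $\Cl_{p'}(G/N)\hookrightarrow\Cl_{p'}(G)$, giving the desired inequality.

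For $k_p(G/N)\le k_p(G)$, the argument is the same with the $p$-part playing the role of the $p'$-part. Given $\bar C\in\Cl_p(G/N)$ with representative $\bar g\ne 1$, lift $\bar g$ to $g\in G$ and set $h:=g_p$. Then $h$ is a $p$-element of $G$ and $\pi(h)=\pi(g)_p=\bar g_p=\bar g$. The key point to verify is that $h\ne 1$: indeed $\pi(h)=\bar g\ne 1$, so $h\notin N$ and in particular $h\ne 1$, hence $[h]_G\in\Cl_p(G)$. The same well-definedness and injectivity argument as above then yields an injection $\Cl_p(G/N)\hookrightarrow\Cl_p(G)$.

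There is no real obstacle here; the only point requiring a little care is the $k_p$ case, where one must ensure that the lift of a nontrivial $p$-element of $G/N$ actually lands on a \emph{nontrivial} $p$-element of $G$, which is guaranteed precisely by taking the $p$-part of a set-theoretic lift (rather than an arbitrary lift, which might be $p$-singular and could in principle lie in $N$).
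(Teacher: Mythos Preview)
Your argument is correct. For $k_p(G/N)\le k_p(G)$ it coincides with the paper's: both observe that a nontrivial $p$-element of $G/N$ lifts to an element of $G$ whose $p$-part is a nontrivial $p$-element mapping onto it, and this yields the required injection on classes. For $k_{p'}(G/N)\le k_{p'}(G)$ the paper takes a different route: it invokes the identification $k_{p'}(X)=|\IBR_p(X)|$ together with inflation of $p$-Brauer characters $\IBR_p(G/N)\hookrightarrow\IBR_p(G)$. Your elementwise construction is more self-contained and avoids any appeal to modular representation theory; the paper's version is a one-line consequence of a standard fact, which is convenient given that Brauer characters are used throughout the paper anyway.

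One small caveat: your parenthetical claim that $\bar C\mapsto[h]_G$ is well defined is not literally true, since different lifts $g$ and $gn$ (with $n\in N$) of the same $\bar g$ may have non-conjugate $p'$-parts (take $G=C_2\times C_2$, $N=C_2\times 1$, $p=3$, $\bar g=\bar 1$). This is harmless for your purposes: simply fix one choice of lift for each class. Your injectivity argument only uses that $\pi(h)=\bar g$, and it goes through unchanged regardless of which lifts were chosen.
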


\begin{proof} Recall that $k_{p'}(G)$ is exactly the number of
$p$-Brauer irreducible characters of $G$ and every character of
$G/N$ can be viewed as a character of $G$. Therefore the inequality
$k_{p'}(G/N) \leq k_{p'}(G)$ follows.

Now let $gN$ be a $p$-element of $G/N$. Suppose that $g=g_p
g_{p'}=g_{p'}g_p$ where $g_p$ is a $p$-element and $g_{p'}$ is a
$p'$-element. Then we have $gN=g_pNg_{p'}N=g_{p'}Ng_pN$ where $g_pN$
is a $p$-element and $g_{p'}N$ is a $p$-regular element of $G/N$.
Since $gN$ is a $p$-element, it follows that $g_{p'}N=N$. Thus
$gN=g_pN$, which means that every $p$-element of $G/N$ has a
representative which is a $p$-element of $G$, proving that
$k_{p}(G/N) \leq k_{p}(G)$.
\end{proof}

Next we improve a key result of \cite{Maroti16}.

\begin{lemma}\label{lemma-Maroti}
Let $V$ be an irreducible and faithful $FH$-module for some finite
group $H$ and finite field $F$ of characteristic $p$. Suppose that
$p$ does not divide $|H|$. Then we have $k(H) + n(H, V ) - 1 \geq 2
\sqrt{p-1}$ with equality if and only if $\sqrt{p - 1}$ is an
integer, $|V | = |F| = p$ and $|H| = \sqrt{p - 1}$.
\end{lemma}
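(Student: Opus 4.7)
The overall plan is to establish the stronger inequality $k(H)+n(H,V)-1\geq 2\sqrt{|V|-1}$; since $V$ is a vector space over a field of characteristic $p$, we have $|V|\geq p$ and the desired bound follows at once. The starting point is the elementary orbit-counting estimate
\[
n(H,V)\geq 1+\frac{|V|-1}{|H|}, \qquad (\ast)
\]
valid because $\{0\}$ is an $H$-orbit and every other $H$-orbit on $V$ has size at most $|H|$. Equality in $(\ast)$ holds precisely when $H$ acts transitively on $V\setminus\{0\}$.

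First I would treat the case where $H$ is abelian. Then $k(H)=|H|$, so combining with $(\ast)$ and applying the AM-GM inequality gives
\[
k(H)+n(H,V)-1\geq |H|+\frac{|V|-1}{|H|}\geq 2\sqrt{|V|-1}\geq 2\sqrt{p-1}.
\]
Equality throughout forces $|H|^2=|V|-1$ and $|V|=p$. Since $V$ is a faithful irreducible $FH$-module, $|V|=|F|^{\dim_F V}=p$ forces $\dim_F V=1$ and $|F|=p$; then $H$ embeds in $F^\times=C_{p-1}$ and must have order $\sqrt{p-1}$, recovering exactly the stated equality configuration. In particular $\sqrt{p-1}$ must be an integer.

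Next I would handle the nonabelian case. Here $\dim_F V\geq 2$, since otherwise the faithful irreducible action would force $H\leq F^\times$ cyclic; consequently $|V|\geq |F|^2\geq p^2$ and the final inequality $2\sqrt{|V|-1}\geq 2\sqrt{p-1}$ becomes strict, providing some slack. My plan is then to split into subcases according to the structure of $H$ as a linear group on $V$: if $H$ has a nonabelian composition factor, Theorem~\ref{proposition-simple-groups} supplies a strong lower bound on $k(H)$, and combined with $(\ast)$ this yields the inequality with room to spare; for solvable nonabelian $H$, I would refine $(\ast)$ using block or extension-field structure of the $H$-action on $V$, together with standard class-number lower bounds for solvable groups. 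The main obstacle will be the solvable nonabelian range, where $k(H)$ can be considerably smaller than $|H|$ so the naive AM-GM step fails, and a more careful interplay between orbit counts on $V$ and the class structure of $H$ is required. In every nonabelian subcase, strict inequality should be achieved, so no new equality configuration arises and the cyclic scenario identified above is the only one.
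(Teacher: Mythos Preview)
Your overall plan is to prove the stronger inequality
\[
k(H)+n(H,V)-1\;\ge\;2\sqrt{|V|-1},
\]
and then use $|V|\ge p$. This stronger inequality is \emph{false}, so the plan cannot succeed. A concrete counterexample: take $p=7$, $H=\Al_5$ (so $p\nmid|H|$), and let $V$ be the $4$-dimensional deleted permutation module over $\FF_7$, which is faithful and irreducible. By orbit counting,
\[
n(H,V)=\frac{1}{60}\Bigl(7^{4}+15\cdot 7^{2}+20\cdot 7^{2}+24\cdot 7^{0}\Bigr)=\frac{4140}{60}=69,
\]
so $k(H)+n(H,V)-1=5+69-1=73$, whereas $2\sqrt{|V|-1}=2\sqrt{2400}>97$. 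Thus your target inequality fails by a wide margin. More generally, whenever $H$ is a fixed nonabelian group with small $k(H)$ and $\dim_F V$ grows, the left side grows like $|V|/|H|$ while the right side grows like $|V|^{1/2}$; your ``stronger'' bound is hopeless in this regime. This also shows why invoking Theorem~\ref{proposition-simple-groups} does not help here: that result yields bounds of order $\sqrt{p-1}$, not $\sqrt{|V|-1}$.

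Your abelian case is fine and correctly identifies the equality configuration, but this part is also the easy part of the paper's argument. The paper does \emph{not} aim at $2\sqrt{|V|-1}$; it works directly with $2\sqrt{p-1}$. For $p\ge 59$ the result is quoted from \cite{Maroti16}. For $p<59$ the paper carries out a finite case analysis, splitting according to $|V|=p$, $|V|=p^2$ (where the H\'ethelyi--K\"ulshammer estimate $f(H,V)\ge(49p+1)/60$ handles the solvable case and a short classification argument handles the nonsolvable case), and $|V|\ge p^3$ (where one plays off $k(H)\le c|H|$ against the Vera~L\'opez classifications of groups with few classes). The essential point you are missing is that the genuine leverage comes from bounding things against $p$, not against $|V|$; once $\dim_F V\ge 2$ the quantity $\sqrt{|V|-1}$ is far too large a target.
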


\begin{proof}
This follows from \cite[Theorem 2.1]{Maroti16} for $p \geq 59$. We take this opportunity to note that \cite[Lemma 3.2]{Maroti16} should be replaced by a different but similar statement, namely by ``With the above notation and assumptions, $$\max \{ t+1, k  \} \leq \binom{t+k-1}{k-1} \leq n(G,V)."$$ (in the notation of \cite{Maroti16}). The statement is \cite[Lemma 2.6]{Foulser}. This does not effect the proof of \cite[Theorem 2.1]{Maroti16}, only straightforward and minor changes are to be made.

Assume that $p < 59$. For convenience, let $f(H,V) = k(H) + n(H,V) - 1$.

If $|V| = p$, then $H$ is cyclic of order dividing $p-1$ and
$$f(H,V) = |H| + \frac{p-1}{|H|} \geq 2 \sqrt{p-1}$$ with equality
if and only if $\sqrt{p-1}$ is an integer and $|H| = \sqrt{p-1}$.
From now on assume that $|V| > p$.

Let $|V| = p^{2}$. Assume first that $H$ is solvable. The argument
of H\'ethelyi, K\"ulshammer \cite[p. 661-662]{HK} gives $f(H,V) \geq
(49p+1)/60$. It is easy to see that $(49p+1)/60 > 2 \sqrt{p-1}$
unless $p \in \{ 2, 3 \}$. Let $p \in \{ 2,3 \}$. We are finished if
$k(H) \geq 3 > 2 \sqrt{p-1}$. Otherwise $|H| \leq 2$ and the integer
$f(H,V)$ is at least $1 + (p^{2}-1)/2 \geq 5/2$, and so $f(H,V) \geq
3 > 2 \sqrt{p-1}$.

Assume now that $|V| = p^{2}$ and $H$ is non-solvable. Then $H \leq
\bZ(\GL(V)) \cdot \SL(V)$ by \cite[Theorem 3.5]{Giudici} and so
$H/\bZ(H)$ is isomorphic to $A_5$ and $p \in \{ 5, 11, 31, 41 \}$ by
\cite[p. 213-214]{Huppert} or \cite[p. 285]{Dickson}. Moreover,
since $|\bZ(\SL(V))| = 2$, the factor group $H/ (\bZ(\SL(V)) \cap
H)$ is a direct product of $A_5$ and a cyclic group of order (at
least) $|\bZ(H)|/2$. This implies that $k(H) \geq 2.5 \cdot
|\bZ(H)|$. We thus have
\begin{equation}
\label{e1} f(H,V) \geq 2.5 \cdot |\bZ(H)| + \frac{p^{2}-1}{60 \cdot
|\bZ(H)|} = 2.5 \cdot |\bZ(H)| + \frac{(2.5/60) \cdot (p^{2}-1)}{2.5
\cdot |\bZ(H)|}.
\end{equation}
The right-hand side of (\ref{e1}) is at least $2 \sqrt{(2.5/60)
\cdot (p^{2}-1)} > 0.4 \sqrt{p^{2}-1}$, which is larger than $2
\sqrt{p-1}$ unless $p \in \{ 5, 11 \}$. If $p = 5$, then $H =
\SL(2,5) = \SL(V)$ and $f(H,V) = 10 > 2 \sqrt{p-1}$. Assume that $p
= 11$. If $\bZ(H)$ is non-trivial, then $k(H) \geq 9$ and so $f(H,V)
\geq 7 > 2 \sqrt{10-1}$. If $H$ is isomorphic to $A_5$ (a case which
probably does not occur), then $k(H) = 5$ and $(11^{2}-1)/|H| = 2$
and so $f(H,V) \geq 7$.

Assume that $|V| \geq p^{3}$. Let $c = (p-1)/(p^{3}-1)$. If $k(H) >
c \cdot |H|$, then
$$f(H,V) > c \cdot |H| + \frac{p^{3}-1}{|H|} = c \cdot |H| +
\frac{c \cdot (p^{3}-1)}{c \cdot |H|} \geq 2 \sqrt{c \cdot
(p^{3}-1)} = 2 \sqrt{p-1}.$$ Thus assume that $k(H) \leq c \cdot
|H|$.

Observe that $c \leq 1/7$. The list of finite groups $X$ with $k(X)
\leq 4$ found in \cite{VLVL1} shows that $k(X) > |X|/7$. We may thus
assume that $k(H) \geq 5$.

If $p \leq 7$, then $f(H,V) \geq 5 + 1 > 2 \sqrt{7-1} \geq 2
\sqrt{p-1}$. We may have $p \geq 11$.

Observe that $c \leq 1/133$. The list of finite groups $X$ with
$k(X) \leq 8$ found in \cite{VLVL1} shows that $k(X) > |X|/133$. We
may thus assume that $k(H) \geq 9$.

If $p \leq 23$, then $f(H,V) \geq 9 + 1 > 2 \sqrt{23-1} \geq 2
\sqrt{p-1}$. Assume that $p \geq 29$.

Now $c \leq 1/871$. The list of finite groups $X$ with $k(X) \leq 9$
found in \cite{VLVL1} shows that $k(X) > |X|/871$. We may thus
assume that $k(H) \geq 10$.

If $p \leq 31$, then $f(H,V) \geq 10 + 1 > 2 \sqrt{31-1} \geq 2
\sqrt{p-1}$. Assume that $p \geq 37$.

Let $k(H) = 10$ or $k(H) = 11$. The list in \cite{VLVL1} shows that
$|H| \leq 20160$ in the first case and $|H| \leq 29120$ in the
second. Thus $f(H,V) \geq k(H) + (p^{3}-1)/|H| > 2 \sqrt{p-1}$ for
every prime $p$ with $37 \leq p \leq 53$. Thus $k(H) \geq 12$.

We have $f(H,V) \geq 12 + 1 > 2 \sqrt{p-1}$ for $p \leq 53$, unless
$p = 47$ or $p = 53$. Moreover, if $k(H) \geq 14$, then $f(H,V) \geq
14 + 1 > 2 \sqrt{53-1} \geq 2 \sqrt{p-1}$. Thus we may assume that
$(k(H),p) \in \{ (12,47), (12,53), (13,47), (13,53) \}$.

Let $k(H) = 12$. Then $|H| \leq 43320$ or $H$ is isomorphic to the
Mathieu group $M_{22}$ by \cite{VLVL2}. In the former case $f(H,V)
\geq 12 + (p^{3}-1)/43320 > 2 \sqrt{p-1}$. Observe that $|M_{22}|$
is equal to $443520$, which does not divide $|\GL(3,p)|$ (for $p \in
\{ 47, 53 \}$). Thus in the second case $f(H,V) \geq 12 +
(p^{4}-1)/|H| > 23 > 2 \sqrt{p-1}$.

Finally, let $k(H) = 13$ and $p \in \{ 47, 53 \}$. If $p = 47$, then
$f(H,V) \geq 14$ which is larger than $2 \sqrt{p-1}$. Let $p = 53$.
If $H$ is not a nilpotent group, then the list in \cite{VLS} shows
that $|H| \leq 4840$ and so $f(H,V) \geq 13 + (53^{3}-1)/4840 > 2
\sqrt{53-1}$. Let $H$ be nilpotent. Since $13 = k(H) =
\prod_{i=1}^{t} k(P_{i})$ where $P_{i}$ is a Sylow $p_{i}$-subgroup
of $H$ and $\{ p_{1}, \ldots , p_{t} \}$ is the set of distinct
prime divisors of $|H|$ and since $13$ is prime, we must have $t =
1$ and that $H$ is a $p_{1}$-group. Now $H$ cannot be transitive on
$V \setminus \{ 0 \}$ since $52 = (p-1) \mid (|V|-1)$ cannot divide
$|H|$. This means that $f(H,V) \geq 13 + 2 > 2 \sqrt{53-1}$. The
proof is complete.
\end{proof}

We finally can prove Theorem \ref{theorem-general-bound-class},
which is restated below.

\begin{theorem}
Let $p$ be a prime and $G$ be a finite group of order divisible by
$p$. We have
\[
k_{p}(G)+k_{p'}(G)\geq 2\sqrt{p-1}.
\]
Moreover, the equality occurs if and only if $\sqrt{p - 1}$ is an
integer, $G = C_p \rtimes C_{\sqrt{p-1}}$ and $\bC_G(C_p) = C_p$.
\end{theorem}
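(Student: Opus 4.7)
The plan is to argue by induction on $|G|$, using Lemmas \ref{lemma-normal-subgroup} and \ref{lemma-Maroti} together with Theorem \ref{proposition-simple-groups}(i) to handle the three basic building blocks (normal $p'$-subgroups, normal $p$-subgroups, and nonabelian simple composition factors). I begin with some easy reductions. If $M := \bO_{p'}(G) \neq 1$, induction on $G/M$ together with Lemma \ref{lemma-normal-subgroup} gives the bound for $G$ with \emph{strict} inequality, since any nontrivial element of $M$ is $p$-regular and produces a $G$-class not lifted from $G/M$ (it projects to the identity of $G/M$). If $G$ itself is a $p$-group, then every nonidentity element is a $p$-element and $k_p(G)+k_{p'}(G)=k(G)\geq |\bZ(G)|\geq p$, which exceeds $2\sqrt{p-1}$ with equality only at $p=2$, $G=C_2$. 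Finally, if $M := \bO_p(G)$ satisfies $1<M<G$ and $p\mid |G/M|$, induction on $G/M$ combined with the identity $k_{p'}(G)=k_{p'}(G/M)$ (any normal $p$-subgroup lies in the kernel of every irreducible $p$-modular representation) and $k_p(G)\geq k_p(G/M)+1$ (nontrivial $G$-classes inside $M$ are $p$-element classes not arising from $G/M$) again yields strict inequality.

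After these reductions, the remaining cases are $\bO_{p'}(G)=\bO_p(G)=1$, or $\bO_p(G)$ is a proper nontrivial Sylow $p$-subgroup of $G$. In the former, every minimal normal subgroup $N$ of $G$ has the form $T^k$ with $T$ a nonabelian simple group of order divisible by $p$; the action of $G$ on a single simple factor $T$ realizes $G/\bC_G(T)$ as a subgroup of $\Aut(T)$ containing $\Inn(T)$, so Theorem \ref{proposition-simple-groups}(i) yields
\[
k_p(G)+k_{p'}(G)\geq n\bigl(\Aut(T),\Cl_p(T)\cup\Cl_{p'}(T)\bigr)>2\sqrt{p-1},
\]
except when $(T,p)\in\{(\Al_5,5),(\PSL_2(16),17)\}$. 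These two pairs I would dispose of by a direct check using $k_5(\Al_5)+k_{5'}(\Al_5)=5$ and $k_{17}(\PSL_2(16))+k_{17'}(\PSL_2(16))=17$, together with the observation that any proper overgroup of these simple groups contributes additional $p$-regular or $p$-element classes.

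The remaining case, where $\bO_p(G)$ is a proper Sylow $p$-subgroup, is the technical heart of the argument. Choose a minimal normal subgroup $N$ of $G$; it is elementary abelian and contained in $\bO_p(G)$. Since $N$ is a nontrivial normal subgroup of the $p$-group $\bO_p(G)$, the $G$-invariant intersection $N\cap\bZ(\bO_p(G))$ is nontrivial, forcing $N\leq\bZ(\bO_p(G))$ by minimality; hence $\bO_p(G)\leq\bC_G(N)$, and $H:=G/\bC_G(N)$, which acts faithfully and irreducibly on the $\FF_p$-vector space $V:=N$, is a $p'$-group. Lemma \ref{lemma-Maroti} gives $k(H)+n(H,V)-1\geq 2\sqrt{p-1}$. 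Nontrivial $G$-orbits on $V$ are distinct $p$-element classes of $G$, so $k_p(G)\geq n(H,V)-1$; and each conjugacy class of $H$ admits a $p$-regular lift in $G$ (for a preimage $g$ of $h\bC_G(N)$, the $p$-part $g_p$ lies in $\bC_G(N)$ because $H$ is $p'$, so $g_{p'}$ still projects to $h\bC_G(N)$), giving $k_{p'}(G)\geq k(H)$. Adding these inequalities proves the bound.

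The main obstacle is tracing the equality case through all the reductions. Equality forces each of the previous inequalities to be tight, which in the final case pins down $\bO_p(G)=\bC_G(N)=N\cong C_p$ (otherwise extra $p$-elements in $\bO_p(G)\setminus N$ or extra $p$-regular classes outside $NH$ would strictly increase $k_p+k_{p'}$), and the equality clause of Lemma \ref{lemma-Maroti} forces $H$ cyclic of order $\sqrt{p-1}$ acting without nonzero fixed points on $V$. This yields precisely $G=C_p\rtimes C_{\sqrt{p-1}}$ with $\bC_G(C_p)=C_p$. A secondary obstacle is the case-by-case verification of the two exceptional pairs in Theorem \ref{proposition-simple-groups}(i), though both simple groups already satisfy $k_p+k_{p'}$ strictly larger than $2\sqrt{p-1}$ on their own.
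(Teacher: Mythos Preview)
Your proof is correct and close in spirit to the paper's, but the reduction is organized differently. The paper argues that for \emph{every} nontrivial normal subgroup $N$, induction forces either $p\nmid|G/N|$ or strict inequality; this immediately yields a unique minimal normal subgroup $V$ with $p\nmid|G/V|$ and a Schur--Zassenhaus complement $H$, after which the abelian/non-abelian dichotomy on $V$ finishes. You instead kill $\bO_{p'}(G)$ first and then split on whether $\bO_p(G)$ is trivial or a full Sylow subgroup; in the Sylow case you work with the quotient $H=G/\bC_G(N)$ rather than a complement. Both routes feed into Lemma~\ref{lemma-Maroti} and Theorem~\ref{proposition-simple-groups}(i), and your equality analysis (forcing $\bO_p(G)=N\cong C_p$ and $\bC_G(N)=N$) is correct. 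Your route avoids needing uniqueness of the minimal normal subgroup; the paper's route gives the complement $H$ directly, which slightly streamlines the equality clause.

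Two points need tightening. First, when $N=T^k$ with $k\geq2$ the factor $T$ is not $G$-invariant, so ``$G/\bC_G(T)$'' is not a group; you mean $\bN_G(T)/\bC_G(T)\leq\Aut(T)$, and the inequality $k_p(G)+k_{p'}(G)\geq n(\Aut(T),\Cl_p(T)\cup\Cl_{p'}(T))$ then follows by counting $G$-orbits on classes of $N$ supported in a single coordinate. Second, for the two exceptional pairs your ``proper overgroup'' remark handles almost simple $G$ (outer cosets give extra $p'$-classes), but for $k\geq2$ you still need one sentence: classes supported on two coordinates, such as $(t_1,t_2,1,\dots,1)$, contribute additional $G$-orbits beyond the single-coordinate count. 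This is exactly the paper's $\binom{m+1}{2}$ argument with $m=n(\Aut(T),\Cl_{p'}(T))$, and it matters because for $(\PSL_2(16),17)$ the single-factor $\Aut(T)$-orbit count is only $7<2\sqrt{16}$.
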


\begin{proof}
If $\sqrt{p-1}$ is an integer, $G = C_p \rtimes C_{\sqrt{p-1}}$ and
$\bC_G(C_p) = C_p$, then $$k_{p}(G) + k_{p'}(G) = 2 \sqrt{p-1}.$$
Assume that $G$ is different from the group $G = C_p \rtimes
C_{\sqrt{p-1}}$ with $\bC_G(C_p) = C_p$ when $\sqrt{p-1}$ is an
integer. We proceed to show by induction on the size of $G$ that
$k_{p}(G) + k_{p'}(G) > 2 \sqrt{p-1}$.

This is clearly true in case $G$ is a cyclic group of order $p$
(different from $C_{2}$). If $G$ is an almost simple group, the
claim follows from Theorem \ref{proposition-simple-groups} unless
$(\mathrm{Soc}(G),p) = (\Al_{5},5)$ or $(\mathrm{Soc}(G),p) =
(\PSL_{2}(16),17)$. Even in these two exceptional cases the bound
can be checked using \cite{Atl1}.

Let $N$ be a non-trivial normal subgroup of $G$. We have
$$k_{p}(G/N) + k_{p'}(G/N) \leq k_{p}(G) + k_{p'}(G)$$ by Lemma
\ref{lemma-normal-subgroup}. We may assume by induction that $p
\nmid |G/N|$, or $\sqrt{p-1}$ is an integer, $G/N = C_p \rtimes
C_{\sqrt{p-1}}$ with $\bC_{G/N}(C_p) = C_p$ and $$2 \sqrt{p-1} =
k_{p}(G/N) + k_{p'}(G/N) \leq k_{p}(G) + k_{p'}(G).$$ In this latter
case we are finished by Lemma \ref{lemma-normal-subgroup} unless
$k_{p}(G/N) = k_{p}(G)$ and $k_{p'}(G/N) = k_{p'}(G)$. However,
$k_{p}(G/N) < k_{p}(G)$ if $p \mid |N|$ and $k_{p'}(G/N) <
k_{p'}(G)$ if $p \nmid |N|$. We are thus left with the case that $p
\nmid |G/N|$ and $p \mid |N|$. Since $p \nmid |G/N|$ and $p \mid
|N|$ hold for every non-trivial normal subgroup $N$ of $G$, the
group $G$ must have a unique minimal normal subgroup $V$. Moreover,
$G$ has a complement $H$ for $V$ by the Schur-Zassenhaus theorem.

Assume that $V$ is elementary abelian. The subgroup $H$ of $G$ acts
faithfully, coprimely and irreducibly on $V$. We have $k(H) + n(H, V
) - 1 > 2 \sqrt{p-1}$ by Lemma~\ref{lemma-Maroti}. Observe that
$k_{p'}(G) \geq k_{p'}(G/V)=k_{p'}(H)=k(H)$ and that $k_p(G)\geq
n(H,V)-1$ since each $H$-orbit on $V$ produces a $G$-conjugacy class
of $p$-elements. These give the desired $k_{p'}(G) + k_{p}(G) > 2
\sqrt{p-1}$ bound.

It remains to assume that $V$ is non-abelian and thus it is
isomorphic to a direct product of copies of a non-abelian simple
group $S$. Since almost simple groups $G$ have been treated before,
$V$ is the direct product of at least two copies of $S$. As $p\mid
|V|$, we have $p\mid |S|$. First suppose that $(S,p)$ is neither
$(\Al_5,5)$ nor $(\PSL_2(16),17)$. From
Theorem~\ref{proposition-simple-groups}, we know that $H$ has more
than $2\sqrt{p-1}$ orbits on conjugacy classes of $p$-regular and
$p'$-regular elements of $S$, and therefore of $V$. Clearly, the
number of these orbits is at most $k_{p}(G)+k_{p'}(G)$, and hence
the theorem follows. Even in the case
$(S,p)\in\{(\Al_5,5),(\PSL_2(16),17)\}$ we are also done since the
number of $G$-orbits on $p$-regular classes of $V$ is at least
$k(k+1)/2$, where $k=n(\Aut(S),\Cl_{p'}(S))=4$ for $(S,p)=(\Al_5,5)$
and $5$ for $(S,p)=(\PSL_2(16),17)$. We have finished the proof.
\end{proof}

%%%%%%%%%%%%%%%%%%%%%%%%%%%%%%%%%%%%%%%%%%%%%%%%%%%%%%%%%%%%%%%%%%%%
\section{The number of Brauer characters of non-$p$-solvable
groups}\label{section-Brauer-characters}

In this section we prove Theorem \ref{nonsolvable}. Let $p$ be a
prime. The set of irreducible $p$-Brauer characters of a finite
group $G$ is denoted by $\IBR_p(G)$. We give two lower bounds for
$|\IBR_p(G)|$ in case $G$ is a non-$p$-solvable finite group. Our
result can be compared to \cite[Theorem 1.1]{Moreto-Nguyen16} where
it was shown that $|\IBR_p(G)|$ is bounded below by a function of
$|G/\bO_\infty(G)|$ where $\bO_\infty(G)$ denotes the largest
solvable normal subgroup of $G$.

Let $G$ be a non-$p$-solvable finite group. Let $N:=\bO_\infty(G)$.
We have $|\IBR_p(G)| = k_{p'}(G) \geq k_{p'}(G/N) = |\IBR_p(G/N)|$
by Lemma~\ref{lemma-normal-subgroup}. It is sufficient to establish
the bounds for the group $G/N$, that is, we may assume that $G$ has
no elementary abelian minimal normal subgroup. We may also assume by
the same argument that every minimal normal subgroup of $G$ has
order divisible by $p$.

Let $\mathrm{Soc}(G)$ denote the socle of $G$ defined to be the
product of all minimal normal subgroups of $G$. In this case this is
a characteristic subgroup which is a direct product of non-abelian
simple groups. Let $S$ be a non-trivial direct summand of
$\mathrm{Soc}(G)$.

Assume first that $S$ is $G$-invariant. Observe that $k_{p'}(G)$ is
at least the number of $G$-orbits of $p$-regular elements in $S$.
This latter number is greater than $2 \sqrt{p-1}$ by (iii) of
Theorem \ref{proposition-simple-groups}, unless $S$ and $p$ appear
in Table \ref{table-exceptions} and thus $p \leq 257$. In any case,
$k_{p'}(G) > 2 \sqrt{p-1}$.

We are left with the case when $S$ is not $G$-invariant. Let $k =
n(\Aut(S),\mathrm{Cl}_{p'}(S))$. Again by (iii) of Theorem
\ref{proposition-simple-groups}, this is larger than $2 \sqrt{p-1}$
unless possibly if $p \leq 257$, but in any case $k > \sqrt{p-1}$.
Let $t$ denote the number of different conjugates of $S$ under $G$.
Then
$$k_{p'}(G) \geq n(G, \mathrm{Cl}_{p'}(\mathrm{Soc}(G))) \geq \binom{k+t-1}{t} \geq \frac{k(k+1)}{2}.$$
If $p > 257$, then $k(k+1)/2 > 2(p-1) > 2 \sqrt{p-1}$. If $p \leq
257$, then $$k(k+1)/2 > (p-1)/2 \geq \sqrt{p-1},$$ unless $p \leq
3$. Finally, assume that $p \leq 3$. The group $G$ has at least
three different prime divisors by Burnside's Theorem. Thus
$k_{p'}(G) \geq 3 > \sqrt{p-1}$.

%%%%%%%%%%%%%%%%%%%%%%%%%%%%%%%%%%%%%%%%%%%%%%%%%%%%%%%

\section{p-rational and p'-rational
characters}\label{section-p-rational}

In this section we prove
Theorem~\ref{theorem-general-bound-character}.

We first prove Theorem \ref{theorem-general-bound-character} for
$p$-solvable groups. In fact, we can do a bit more. The following
implies Theorem~\ref{theorem-general-bound-character} for
$p$-solvable groups. Here $\QQ_p$ denotes the cyclotomic extension
of rational numbers by a primitive $p$th root of unity. Also, we use
the standard notation $\QQ(\chi)$ for the field of values of a
character $\chi$.

\begin{theorem}\label{theorem-p-rat-solvable}
Let $G$ be a finite $p$-solvable group of order divisible by $p$.
Then
\[
|\Irr_{\mathrm{p-rat}}(G)\cup\Irr_{\QQ_p}(G)|\geq 2\sqrt{p-1}.
\]
%Moreover, the equality occurs if and only if $\sqrt{p - 1}$ is an
%integer, $G = C_p \rtimes C_{\sqrt{p-1}}$ and $\bC_G(C_p) = C_p$.
\end{theorem}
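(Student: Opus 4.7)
The plan is to prove this by induction on $|G|$, reducing to a Frobenius-type configuration in which Lemma \ref{lemma-Maroti} applies. The first step is to observe that for any non-trivial normal subgroup $N \unlhd G$, inflation along $G \to G/N$ preserves both the $p$-rational property and the property of having values in $\QQ_p$; consequently
\[
|\Irr_{\mathrm{p-rat}}(G/N) \cup \Irr_{\QQ_p}(G/N)| \leq |\Irr_{\mathrm{p-rat}}(G) \cup \Irr_{\QQ_p}(G)|.
\]
Whenever $p \mid |G/N|$ for some non-trivial $N$, the inductive hypothesis applied to the $p$-solvable group $G/N$ gives the desired bound for $G$. Thus I may assume $p \nmid |G/N|$ for every non-trivial normal subgroup $N$. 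Combined with $p$-solvability, this should force $G$ to have a unique minimal normal subgroup $V$ which is an elementary abelian $p$-group equal to the full Sylow $p$-subgroup of $G$, so that $|G/V|$ is coprime to $p$. The Schur--Zassenhaus theorem will then produce a complement, giving $G = V \rtimes H$, and the uniqueness and minimality of $V$ will force $H$ to act faithfully and irreducibly on $V$.

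Next I will exhibit two disjoint subfamilies of $\Irr_{\mathrm{p-rat}}(G) \cup \Irr_{\QQ_p}(G)$. The first consists of the $k(H)$ characters of $G/V \cong H$ inflated to $G$: their values lie in $\QQ_{|H|}$ with $\gcd(|H|, p) = 1$, hence they are $p$-rational. The second consists of the induced characters $\chi_{[\lambda]} := \Ind_{V H_\lambda}^G(\tilde\lambda)$, one for each non-trivial $H$-orbit $[\lambda]$ on $\hat V$, where $H_\lambda$ denotes the stabilizer of $\lambda$ and $\tilde\lambda$ is the extension of $\lambda$ to $V H_\lambda$ that is trivial on $H_\lambda$. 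Each $\chi_{[\lambda]}$ is irreducible by Clifford theory, and since $V$ has exponent $p$ its values are sums of $p$-th roots of unity, hence lie in $\QQ_p$; by the coprime duality $n(H,\hat V) = n(H,V)$, this yields $n(H,V) - 1$ such characters. The two families are disjoint because the inflated characters restrict to $V$ as multiples of the trivial character, whereas the induced ones do not.

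Putting the two families together will yield
\[
|\Irr_{\mathrm{p-rat}}(G) \cup \Irr_{\QQ_p}(G)| \geq k(H) + n(H, V) - 1,
\]
and Lemma \ref{lemma-Maroti}, applied to the faithful, irreducible, coprime action of $H$ on the elementary abelian $p$-group $V$, will give the required bound $2\sqrt{p-1}$.

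The principal obstacle I anticipate is the structural reduction: showing that, after iteratively quotienting by normal subgroups on which induction applies, $V$ is really the full Sylow $p$-subgroup of $G$ so that Schur--Zassenhaus yields a complement. This will use $p$-solvability crucially, since a $p'$ minimal normal subgroup would give a quotient of order divisible by $p$ to induct on, so we may assume every minimal normal subgroup is a $p$-group, and the hypothesis $p \nmid |G/N|$ for every non-trivial $N$ then pins down $V$ as the Sylow $p$-subgroup. Once this structural reduction is in place, the Clifford-theoretic bookkeeping and the field-of-values checks are routine, relying only on the soft facts that $V$ has exponent $p$ and that $\gcd(|H|, p) = 1$.
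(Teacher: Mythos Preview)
Your proposal is correct and follows essentially the same approach as the paper: reduce by induction to $G=V\rtimes H$ with $V$ the unique minimal normal subgroup (an elementary abelian $p$-group, complemented by Schur--Zassenhaus), exhibit $k(H)$ inflated $p$-rational characters and $n(H,V)-1$ induced characters with values in $\QQ_p$, and conclude via Lemma~\ref{lemma-Maroti}. The only cosmetic difference is that the paper phrases the extension of each $\theta_i$ to its inertia group as the \emph{canonical extension} (citing \cite[Corollary 6.4]{Navarro18}) rather than your explicit description ``trivial on $H_\lambda$''; these coincide here, and your explicit formulation makes the containment $\QQ(\chi_{[\lambda]})\subseteq\QQ_p$ equally transparent.
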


\begin{proof}
By induction we may assume that for every minimal normal subgroup
$N$ of $G$ we have $p \nmid |G/N|$. It follows that $G$ has a unique
minimal normal subgroup $V$ and $p \nmid |G/V|$. The group $V$ has a
complement $H$ in $G$ by the Schur-Zassenhaus theorem. The group $V$
can be viewed as an irreducible and faithful $FH$-module where $F$
is a finite field $F$ of characteristic $p$. It follows by
Lemma~\ref{lemma-Maroti} that
\[
k(H) + n(H, V ) - 1 \geq 2 \sqrt{p-1}.
\]

Note that every irreducible character of $H$, viewed as a character
of $G$, has values in $\QQ_{|H|}$, and hence is $p$-rational.
Therefore $G$ has exactly $k(H)$ $p$-rational characters whose
kernels contain $V$.

We claim that $G$ has at least $m:=n(H,V)-1$ irreducible characters
with values in $\QQ_p$ and their kernels do not contain $V$. Note
that all characters of $V$ have values in $\QQ_p$.

Let $\theta_1,\theta_2, \ldots ,\theta_{m}$ be representatives of the
$H$-orbits on $\Irr(V)\backslash \{\textbf{1}_H\}$. For each $1\leq
i\leq m$, the character $\theta_i$ has a \emph{canonical extension}
to $I_G(\theta_i)$, say $\widehat{\theta_i}$ such that
$\QQ(\widehat{\theta_i})=\QQ(\theta_i)\subseteq \QQ_p$ (see
\cite[Corollary 6.4]{Navarro18} for instance). It follows that
$\QQ(\widehat{\theta_i}^G)\subseteq \QQ_p$. Also, by Clifford's
theorem we have $\widehat{\theta_i}^G\in\Irr(G)$. Note that the
$\widehat{\theta_i}^G$ are pairwise different. Therefore the claim
follows.

Now we have \[ |\Irr_{\QQ_p}(G)\cup\Irr_{\mathrm{p-rat}}(G)|\geq k(H) + n(H,
V ) - 1 \geq 2 \sqrt{p-1},
\]
which proves the theorem.
\end{proof}

%%%%%%%%%%%%%%%%%%%%%%%%%%%%%%%%%%%%%%%%%%%%%%%%%%%%%%%%%%%%%%%

\begin{lemma}\label{lemma-p=2}
Let $G$ be a nonsolvable group. Then $|\Irr_{2-rat}(G)|\geq 3$.
Consequently, Theorem~\ref{theorem-general-bound-character} holds
for $p=2$.
\end{lemma}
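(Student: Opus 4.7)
The plan is to establish the lower bound $|\Irr_{2-rat}(G)|\geq 3$ by combining the subgroup form of Brauer's permutation lemma with a bound on $k_{2'}(G)$ coming from Burnside's theorem; the consequence for Theorem~\ref{theorem-general-bound-character} at $p=2$ then drops out immediately.

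First I would invoke the strong form of Brauer's permutation lemma, which asserts that for any subgroup $H$ of $\Gal(\QQ_{|G|}/\QQ)$ acting compatibly on $\Irr(G)$ and on $\Cl(G)$, the number of $H$-fixed characters equals the number of $H$-fixed conjugacy classes. Taking $H=\Gal(\QQ_{|G|}/\QQ_{|G|_{2'}})$, the $H$-fixed characters are exactly the $2$-rational ones, while every $2$-regular class of $G$ is $H$-fixed, since if $g$ has odd order and $a\equiv 1\pmod{|G|_{2'}}$ then $g^{a}=g$. This yields
\[
|\Irr_{2-rat}(G)|\geq k_{2'}(G).
\]

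Next I would apply Burnside's theorem. Since $G$ is nonsolvable, it has a nonabelian simple composition factor $S$ whose order is divisible by at least three distinct primes. Therefore $|G|$ is divisible by at least two odd primes $r_{1}\neq r_{2}$, and Cauchy's theorem produces elements of orders $r_{1}$ and $r_{2}$ in $G$; these two classes, together with the identity class, are pairwise non-conjugate $2$-regular classes, so $k_{2'}(G)\geq 3$. Combined with the previous step this proves the lemma.

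For the ``Consequently'' part, observe that $2\sqrt{p-1}=2$ and $\QQ_{2}=\QQ$ at $p=2$, hence $\Irr_{\QQ_{2}}(G)\subseteq \Irr_{2-rat}(G)$. If $G$ is nonsolvable, the lemma gives $|\Irr_{2-rat}(G)|\geq 3>2$. If $G$ is solvable, Theorem~\ref{theorem-p-rat-solvable} at $p=2$ gives $|\Irr_{2-rat}(G)\cup\Irr_{\QQ_{2}}(G)|\geq 2$, which collapses to $|\Irr_{2-rat}(G)|\geq 2$. In either case $|\Irr_{2-rat}(G)\cup\Irr_{2'-rat}(G)|\geq |\Irr_{2-rat}(G)|\geq 2$, which is Theorem~\ref{theorem-general-bound-character} at $p=2$. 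The only real subtlety I anticipate is making certain one uses Brauer's permutation lemma in its subgroup form, and not merely in the form giving equal fixed-point counts for a single Galois automorphism; the stronger form is standard in the character-theoretic literature (for instance in Navarro's book on the McKay conjecture) so this should amount to a straightforward citation.
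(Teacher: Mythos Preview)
Your proof is correct and takes a genuinely different, more elementary route than the paper's.

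The paper proves $|\Irr_{2\text{-rat}}(G)|\geq 3$ by reducing modulo a solvable normal subgroup to the case where $G$ has a nonabelian minimal normal subgroup $N\cong S^k$, then invokes a result from \cite{HSTR20} producing a nontrivial rational character of $S$ extending to $\Aut(S)$, and splits into cases according to $|G/N|$; when $G=N=S$ it appeals to Lusztig's parametrization to exhibit a $2$-rational semisimple character. Your argument bypasses all of this: you observe that the subgroup form of Brauer's permutation lemma (the two permutation representations of $\Gal(\QQ_{|G|}/\QQ)$ on $\Irr(G)$ and on $\Cl(G)$ have equal characters, hence equal $H$-fixed-point counts for every subgroup $H$) gives $|\Irr_{2\text{-rat}}(G)|\geq k_{2'}(G)$ directly, and then $k_{2'}(G)\geq 3$ drops out of Burnside's $p^aq^b$ theorem. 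In effect you are showing that Lemma~\ref{lemma-p-rat-regular} holds for $p=2$ as well; the paper's proof of that lemma uses a single generator of the cyclic group $\Gal(\QQ_{|X|_p}/\QQ)$, which fails for $p=2$, and this is precisely the gap your use of the subgroup form fills.

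What each approach buys: yours is shorter and avoids the classification and Deligne--Lusztig theory entirely; the paper's approach, while heavier, actually constructs explicit rational (not merely $2$-rational) characters lying over specified characters of the socle, information that is reused later in the equality analysis of Theorem~\ref{theorem-general-bound-character}. For the bare inequality $|\Irr_{2\text{-rat}}(G)|\geq 3$, however, your argument is the cleaner one. Your handling of the ``Consequently'' clause matches the paper's: nonsolvable groups by the first part, solvable groups by Theorem~\ref{theorem-p-rat-solvable}, noting $\QQ_2=\QQ$.
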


\begin{proof} By modding out a solvable normal subgroup if
necessary, we assume that $G$ has a nonabelian minimal normal
subgroup $N$, which is a direct product of copies of a nonabelian
simple group $S$. By \cite[Lemma 4.1]{HSTR20}, there exists a
non-principal character $\theta\in\Irr(S)$ that is extendible to a
rational-valued character of $\Aut(S)$, and therefore $G$ has a
rational irreducible character $\chi$ which extends $\theta\times
\cdots \times \theta\in\Irr(N)$.

If $G/N$ has even order, then by Burnside's theorem it has a
nontrivial rational irreducible character, and together with $\chi$
above and the trivial character, it follows that $|\Irr_\QQ(G)|\geq
3$, as wanted. If $|G/N| > 1$ is odd, then every
$\varphi\in\Irr(G/N)$ is $2$-rational and thus all the characters of
the form $\chi\varphi\in\Irr(G)$ are $2$-rational, implying that
$|\Irr_{2-rat}(G)|\geq 3$.

We now can assume that $G=N$. It is in fact sufficient to show that
$|\Irr_{2-rat}(S)|\geq 3$ for every nonabelian simple group $S$.
This is easy to check when $S$ is a sporadic group, the Tits group,
$\PSL_2(q)$ with $q\in\{5, 7, 8, 9, 17\}$, $\PSL_3(3)$, $\PSU_3(3)$,
or $\PSU_4(2)$ using \cite{Atl1}. It is also easy for $S=\Al_n$ by
considering the restrictions of the irreducible characters of $S$
labeled by the partitions $(n-1,1)$ and $(n-2,2)$. So we can, and we
will, assume that $S$ is not one of these groups. First note that
the trivial and Steinberg characters of $S$ are rational. We claim
that $S$ has a $2$-rational semisimple character, and thus the
required bound follows.

By the classification, we can find a simple algebraic group $\GC$ of
adjoint type and a Frobenius endomorphism $F:\GC\rightarrow \GC$
such that $S=[G,G]$ for $G:=\GC^F$. Let $(\GC^\ast,F^\ast)$ be dual
to $(\GC,F)$ and let $G^\ast:={\GC^\ast}^{F^\ast}$. By Lusztig's
classification of the complex irreducible characters of finite
reductive groups \cite{Digne-Michel}, each $G^\ast$-conjugacy class
$s^{G^\ast}$ of a semisimple element $s\in G^\ast$ corresponds to a
semisimple character $\chi_s\in\Irr(G)$. This $\chi_s$ has values in
$\QQ_{|s|}$ by \cite[Lemma 4.2]{Giannelli-Hung-Schaeffer-Rodriguez}
and moreover, by \cite[Proposition 5.1]{Tiep}, if $|s|$ is coprime
to $|\bZ(G^\ast)|$ then $\chi_s$ restricts irreducibly to $S$.

From the assumption on $S$, we have that $|G^\ast|$ is divisible by
at least three different odd primes, and thus $G^\ast$ always
possesses a semisimple element $s$ such that $(|s|,
2|\bZ(G^\ast)|)=1$. This $s$ then corresponds to a semisimple
character $\chi_s\in \Irr(G)$ such that $\chi_s$ restricts
irreducibly to $S$ and $\QQ(\chi_s)\subseteq \QQ_{|s|}$. Thus
$(\chi_s)_{S}$ is $2$-rational.

Theorem~\ref{theorem-general-bound-character} follows for $p=2$
since the solvable case was already treated in
Theorem~\ref{theorem-p-rat-solvable}.
\end{proof}

%By \cite[Theorem
%C]{Navarro-Tiep08}, it follows that $S:=\PSL_2(3^{2a+1})$ for some
%$a\geq 1$ is the only nonabelian composition factor of $G$, and in
%fact $\bO^{2'}(G)/\bO_{2'}(\bO^{2'}(G))\cong S$.
%
%If $G\neq\bO^{2'}(G)$ then $G/\bO^{2'}(G)$ is nontrivial, and hence
%$G$ has a non-rational irreducible character with values in
%$\QQ_{|G/\bO^{2'}(G)|}$. This character is $2$-rational, implying
%that $|\Irr_{2-rat}(G)|\geq 3$, a contradiction. So $S$ must be a
%quotient of $G$ and it is now sufficient to show that
%$|\Irr_{2-rat}(\PSL_2(3^{2a+1}))|\geq 3$.
%
%Note that $\PGL_2(3^{2a+1})$ has a (semisimple) element $s$ of order
%$k:=(3^{2a+1}-1)/2$. Indeed $s\in\PSL_2(3^{2a+1})$ since $k$ is odd.
%This element $s$ produces an irreducible semisimple character
%$\chi_s\in\Irr(\SL_2(3^{2a+1}))$ with values in $\QQ_k$. Note that
%$\chi_s$ is trivial on $\bZ(\SL_2(3^{2a+1}))$, and hence it is also
%an irreducible $2$-rational semisimple character of $S$. We now have
%$|\Irr_{2-rat}(\PSL_2(3^{2a+1}))|\geq 3$ by also considering the
%trivial and the (unipotent) Steinberg character.

The following observation is crucial in the proof of
Theorem~\ref{theorem-general-bound-character} for odd $p$. It is
well-known but we could not find a reference.

\begin{lemma}\label{lemma-p-rat-regular}
For every finite group $X$ and odd prime $p$, $|\Irr_{\mathrm{p-rat}}(X)|\geq
k_{p'}(X)$.
\end{lemma}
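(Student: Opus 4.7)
My plan is to deduce the inequality from Brauer's permutation lemma applied to a suitable Galois group. Write $n = |X|$ and $m = |X|_{p'}$, and let $\Gamma = \mathrm{Gal}(\QQ_n/\QQ)$, which we identify with $(\ZZ/n\ZZ)^\times$; for $\sigma \in \Gamma$ corresponding to an integer $k$, the Galois group acts on $\Irr(X)$ via $\chi^\sigma(g) = \sigma(\chi(g))$ and on $\mathrm{Cl}(X)$ via $[g] \mapsto [g^k]$. These two actions are compatible in the sense that the two permutation characters of $\Gamma$ agree at every element, so Brauer's permutation lemma gives that they are isomorphic permutation representations of $\Gamma$. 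In particular, for every subgroup $H \leq \Gamma$, the number of $H$-fixed irreducible characters equals the number of $H$-fixed conjugacy classes.

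I would then apply this to the subgroup $\Omega := \mathrm{Gal}(\QQ_n/\QQ_m) \leq \Gamma$, which corresponds to the set of residues $k \in (\ZZ/n\ZZ)^\times$ with $k \equiv 1 \pmod{m}$. Every $\chi \in \Irr(X)$ takes values in $\QQ_n$, and $\chi$ is $p$-rational precisely when its values lie in $\QQ_m$, i.e., when $\chi$ is $\Omega$-fixed. Hence the count of $\Omega$-fixed characters equals $|\Irr_{\mathrm{p-rat}}(X)|$.

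The remaining task is to produce enough $\Omega$-fixed conjugacy classes. Here I would observe that every $p$-regular class is $\Omega$-fixed: if $g$ is $p$-regular then $|g|$ divides $m$, so any $k \equiv 1 \pmod m$ satisfies $k \equiv 1 \pmod{|g|}$, giving $g^k = g$. Consequently the number of $\Omega$-fixed classes is at least $k_{p'}(X)$, and combining with the Brauer equality finishes the proof.

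Strictly speaking there is no substantial obstacle; the only delicate point is to invoke the correct form of Brauer's permutation lemma, namely the version asserting isomorphism of permutation representations (equivalently, equality of fixed-point counts under arbitrary subgroups), rather than merely the cyclic version giving equality of $\sigma$-fixed counts for a single $\sigma$. The oddness of $p$ is not actually needed for this argument; it is presumably stated for compatibility with how the lemma is used in Section~\ref{section-p-rational}, where the case $p=2$ is handled separately in Lemma~\ref{lemma-p=2}.
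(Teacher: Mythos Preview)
Your proof is correct and follows essentially the same approach as the paper: both apply Brauer's permutation lemma to the Galois group $\mathrm{Gal}(\QQ_{|X|}/\QQ_{|X|_{p'}})$, identify the fixed characters as the $p$-rational ones, and note that every $p$-regular class is fixed. The only cosmetic difference is that the paper picks a generator $\xi$ of this Galois group (which is cyclic because $p$ is odd, as $(\ZZ/p^a\ZZ)^\times$ is cyclic) and applies the elementwise form of Brauer's lemma, whereas you work with the whole subgroup $\Omega$ and invoke the subgroup-fixed-point version. Your observation that the argument goes through for $p=2$ is correct, since equality of permutation characters forces $|\Irr(X)^H| = |\mathrm{Cl}(X)^H|$ for every subgroup $H$; the paper's formulation with a single generator is precisely where the hypothesis $p$ odd enters.
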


\begin{proof} The lemma is obvious when
$p\nmid |X|$. So we assume $p\mid |X|$. Consider the natural actions
of $\Gamma:=\mathrm{Gal}(\QQ_{|X|}/\QQ_{|X|_{p'}})\cong \mathrm{Gal}(\QQ_{|X|_p}/\QQ)$
on classes and irreducible characters of $X$. Note that $\Gamma$ is
cyclic of order $|X|_p(p-1)/p$. Let $\xi$ be a generator of
$\Gamma$. By Brauer's permutation lemma, $\xi$ fixes the same number
of irreducible characters and classes. Each irreducible character
fixed by $\xi$ has values in $\QQ_{|X|_{p'}}$ and therefore
$p$-rational. On the other hand, for each conjugacy class $\Cl(g)$
of a $p$-regular element $g$, we have $\chi(g)\in \QQ_{|X|_{p'}}$
for all $\chi\in\Irr(X)$, implying that $\Cl(g)$ is fixed by $\xi$.
The lemma now follows.
\end{proof}

Using the theory of the so-called $B_p$-characters \cite{Isaacs84},
one can similarly show that, for a $p$-solvable group $G$,
$|\Irr_{\mathrm{p'-rat}}(G)|$ is no less than the number of classes of
$p$-elements. This seems to be true for all finite groups but
remains to be confirmed.

\begin{lemma}\label{lemma-p-rat-numberfactor>2}
Let $G$ be a finite group with a non-abelian normal subgroup
$$N\cong S\times \cdots \times S,$$ where $S$ is simple, $2<p\mid
|S|$, and there are at least two factors of $S$ in $N$. Then
$|\Irr_{\mathrm{p-rat}}(G)|>2\sqrt{p-1}$.
\end{lemma}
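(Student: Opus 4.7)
The plan is to reduce the statement to a lower bound on $k_{p'}(G)$ and then exploit the product structure of $N$ through an orbit-counting argument on the conjugacy classes of $N$. Since $p$ is odd, Lemma~\ref{lemma-p-rat-regular} gives the inequality
\[
|\Irr_{\mathrm{p-rat}}(G)| \;\geq\; k_{p'}(G),
\]
so it suffices to establish $k_{p'}(G) > 2\sqrt{p-1}$.

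Write $N = S_1 \times \cdots \times S_t$ with $t \geq 2$ and each $S_i \cong S$. Two $p$-regular elements of $N$ that are not $G$-conjugate lie in distinct $G$-classes of $G$, so
\[
k_{p'}(G) \;\geq\; n(G, \Cl_{p'}(N)).
\]
The conjugation action of $G$ on $N$-classes factors through the image of $G$ in $\Aut(N) = \Aut(S) \wr \Sy_t$. Since a subgroup's orbits refine those of a larger group (smaller acting group yields more orbits), we get the further lower bound
\[
n(G, \Cl_{p'}(N)) \;\geq\; n(\Aut(N), \Cl_{p'}(N)).
\]
A $p$-regular class of $N$ is a $t$-tuple $(C_1,\dots,C_t)$ of $p$-regular classes of $S$; the subgroup $\Aut(S)^t$ acts componentwise and $\Sy_t$ permutes the coordinates, so $\Aut(N)$-orbits on $\Cl_{p'}(N)$ are in bijection with multisets of size $t$ drawn from the set of $\Aut(S)$-orbits on $\Cl_{p'}(S)$. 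Setting $k := n(\Aut(S),\Cl_{p'}(S))$, this count is $\binom{k+t-1}{t}$.

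To conclude, I will invoke Theorem~\ref{proposition-simple-groups}(ii), which yields $k \geq 2(p-1)^{1/4}$, hence $k^2 \geq 4\sqrt{p-1}$. For $k \geq 1$, the quantity $\binom{k+t-1}{t}$ is non-decreasing in $t$, so for $t \geq 2$,
\[
\binom{k+t-1}{t} \;\geq\; \binom{k+1}{2} \;=\; \frac{k(k+1)}{2} \;>\; \frac{k^{2}}{2} \;\geq\; 2\sqrt{p-1}.
\]
Chaining the inequalities gives $|\Irr_{\mathrm{p-rat}}(G)| \geq k_{p'}(G) > 2\sqrt{p-1}$, as desired. The only point requiring care is keeping track of which direction the orbit-refinement inequality runs, so that the chain of lower bounds propagates correctly from $\Aut(N)$ back down to $G$; everything else is a routine application of Theorem~\ref{proposition-simple-groups}(ii) and Lemma~\ref{lemma-p-rat-regular}.
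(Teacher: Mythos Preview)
Your proof is correct and follows essentially the same route as the paper: reduce to $k_{p'}(G)>2\sqrt{p-1}$ via Lemma~\ref{lemma-p-rat-regular}, bound $k_{p'}(G)$ below by the number of $\Aut(N)$-orbits on $\Cl_{p'}(N)$, identify these orbits with size-$t$ multisets from the $k=n(\Aut(S),\Cl_{p'}(S))$ orbits on $\Cl_{p'}(S)$, and finish with $\binom{k+t-1}{t}\geq k(k+1)/2>2\sqrt{p-1}$ using Theorem~\ref{proposition-simple-groups}(ii). The paper's proof is the same argument, only more terse (it writes ``as before'' for the multiset count).
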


\begin{proof} Let $k:=n(\Aut(S),\Cl_{p'}(S))$. Since there are at least two factors
of $S$ in $N$, as before we have $n(\Aut(N),\Cl_{p'}(N))\geq
{k(k+1)}/{2}$. Since $k\geq2(p-1)^{1/4}$ by
Theorem~\ref{proposition-simple-groups}(ii) and $G$ acts naturally
on $\Cl_{p'}(N)$, it follows that $n(G,\Cl_{p'}(N))>2\sqrt{p-1}$,
which in turn implies that $k_{p'}(G)>2\sqrt{p-1}$. The lemma now
follows by Lemma~\ref{lemma-p-rat-regular}.
\end{proof}

\begin{theorem}\label{theorem-p-rat-almostsimple}
Let $S$ be a nonabelian simple group of order divisible by $p>2$ and
$S\leq G\leq \Aut(S)$ be an almost simple group. Then
$$|\Irr_{\mathrm{p-rat}}(G)\cup \Irr_{\mathrm{p'-rat}}(G)|> 2\sqrt{p-1}.$$
\end{theorem}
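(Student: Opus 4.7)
The plan is to deduce this theorem directly from Theorem~\ref{theorem-general-bound-class} by applying the Brauer permutation lemma argument of Lemma~\ref{lemma-p-rat-regular} symmetrically on both the $p$-side and the $p'$-side. Set $\Gamma := \Gal(\QQ_{|G|}/\QQ_{|G|_{p'}})$ and $\Gamma' := \Gal(\QQ_{|G|}/\QQ_{|G|_p})$. Since $|G|_p$ and $|G|_{p'}$ are coprime, the Chinese Remainder Theorem gives $\Gal(\QQ_{|G|}/\QQ) = \Gamma \times \Gamma'$, so that a character of $G$ is $p$-rational (resp.\ $p'$-rational, resp.\ rational) precisely when it is fixed by $\Gamma$ (resp.\ $\Gamma'$, resp.\ both).

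By Brauer's permutation lemma, for any subgroup $H$ of $\Gal(\QQ_{|G|}/\QQ)$ the number of $H$-fixed irreducible characters of $G$ equals the number of $H$-fixed $G$-conjugacy classes. Applying this to $\Gamma$, $\Gamma'$, and the full Galois group, together with inclusion--exclusion on both the character and conjugacy-class sides, gives
\[
|\Irr_{\mathrm{p-rat}}(G) \cup \Irr_{\mathrm{p'-rat}}(G)|
= |\{\, c \in \Cl(G) : \Gamma \text{ fixes } c \,\} \cup \{\, c \in \Cl(G) : \Gamma' \text{ fixes } c \,\}|.
\]
Just as in the proof of Lemma~\ref{lemma-p-rat-regular}, every $p$-regular class of $G$ is $\Gamma$-fixed, and by the same argument with the roles of $p$ and $p'$ interchanged, every class of a non-trivial $p$-element of $G$ is $\Gamma'$-fixed. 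These two families of classes overlap only in the trivial class (which is itself both $\Gamma$- and $\Gamma'$-fixed), so the right-hand side above is at least $k_{p'}(G) + k_p(G)$.

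To conclude, I would invoke Theorem~\ref{theorem-general-bound-class}, which asserts that $k_{p'}(G) + k_p(G) \geq 2\sqrt{p-1}$ with equality only when $G$ is the Frobenius group $C_p \rtimes C_{\sqrt{p-1}}$ with $\bC_G(C_p) = C_p$. Since $G$ is almost simple with non-abelian socle $S$, it is not of this form, and so strict inequality $k_{p'}(G) + k_p(G) > 2\sqrt{p-1}$ holds, giving the theorem. The only point requiring any care is the symmetric statement for $\Gamma'$---namely that $|\Irr_{\mathrm{p'-rat}}(G)|$ equals the number of $\Gamma'$-fixed conjugacy classes---which is not recorded as a separate lemma in the paper but follows from Brauer's permutation lemma in its general (not necessarily cyclic) form; granted this, the argument is uniform and bypasses the exceptions of Table~\ref{table-exceptions} entirely.
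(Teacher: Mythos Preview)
Your argument has a genuine gap at the step where you invoke ``Brauer's permutation lemma in its general (not necessarily cyclic) form'' to conclude that $|\Irr(G)^{H}| = |\Cl(G)^{H}|$ for the non-cyclic subgroups $H = \Gamma'$ and $H = \Gamma \times \Gamma'$. Brauer's lemma gives this equality for each \emph{individual} Galois automorphism, and hence shows that the permutation representations of the full Galois group on $\Irr(G)$ and on $\Cl(G)$ are isomorphic as linear representations (same character, same orbit counts for every subgroup). But isomorphic permutation representations need not have the same number of global fixed points: for $A = C_2 \times C_2$ the $A$-sets ``two fixed points plus one regular orbit'' and ``three size-$2$ orbits with the three distinct order-$2$ kernels'' afford the same permutation character yet have $2$ and $0$ fixed points respectively. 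Since $\Gamma' \cong (\ZZ/|G|_{p'}\ZZ)^{\times}$ is typically non-cyclic, your inclusion--exclusion identity is not justified by the lemma you cite. (The group $\Gamma$ \emph{is} cyclic for odd $p$, which is exactly why Lemma~\ref{lemma-p-rat-regular} goes through on that side.)

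That this is not merely a missing citation is corroborated by the paper itself: the remark closing Section~\ref{section-p-rational} records that the authors can prove $|\Irr_{\mathrm{p'-rat}}(G)| \geq 1 + k_p(G)$ only for $p$-solvable groups and regard the inequality $|\Irr_{\mathrm{p-rat}}(G) \cup \Irr_{\mathrm{p'-rat}}(G)| \geq k_{p'}(G) + k_p(G)$ as open. Your argument would settle both in one line, so the gap is real. The paper's proof of Theorem~\ref{theorem-p-rat-almostsimple} instead uses only the cyclic case (Lemma~\ref{lemma-p-rat-regular}) together with Theorem~\ref{proposition-simple-groups}(iii), and then handles the finitely many pairs $(S,p)$ in Table~\ref{table-exceptions} by direct computation and ad hoc character-theoretic arguments.
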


\begin{proof} Suppose first that $S$ is not listed in
Table~\ref{table-exceptions}. Then by
Theorem~\ref{proposition-simple-groups}(iii) we have
$n(\Aut(S),\Cl_{p'}(S))>2\sqrt{p-1}$. It follows that
$k_{p'}(G)>2\sqrt{p-1}$, implying that $\Irr_{\mathrm{p-rat}}(G)>2\sqrt{p-1}$
by Lemma~\ref{lemma-p-rat-regular}.

We now go over the simple groups in Table~\ref{table-exceptions} and
establish the bound for each of them. Indeed we are able to check
most of them directly using \cite{Atl1,GAP}, except the ones below.

Let $(S,p)=(\PSU_3(16),241)$. In the proof of Lemma~\ref{PSU3}, we
have shown that $k_{p'}(S)>2(16^2-16+1)/3>160$. Therefore, if
$|G/S|\leq 4$ then $k_{p'}(G)>160/4>2\sqrt{p-1}$ and we are done. It
remains to assume that $G=\Aut(S)=S\rtimes C_8$. Again in the proof
of Lemma~\ref{PSU3}, we already showed that $G=\Aut(S)$ has at least
27 orbits on $p$-regular classes of $S$, and so
$k_{p'}(G)\geq n(G,\Cl_{p'}(S))+k_{p'}(G/S)-1\geq 27+7=34$. We now
have $\Irr_{\mathrm{p-rat}}(G)>2\sqrt{p-1}$, as desired.

%We know that the trivial character and the Steinberg character (of
%order $16^3$) of $S$ both have a rational extension to $G=\Aut(S)$.
%Moreover, $S=\SL_3(16)=\PGL_3(16)$ has a unique class $(s)$ of
%elements of order 3. Such an element $s$ therefore must be rational.
%By Lusztig's classification, this class $s^S$ gives rise to a
%rational semisimple character $\chi_s$ of degree $16^3+1$ and
%invariant under $\Aut(S)$, and thus $\chi_s$ has a rational
%extension by \cite[Remark 3.2]{NT1}. We have shown that $S$ has 3
%different irreducible characters, each of which is extendible to a
%rational character of $\Aut(S)$. By Gallagher's theorem (see
%\cite[Corollary 6.17]{Isaacs1}), these 3 characters will produce 24
%different irreducible characters of $G$ having values in $\QQ_8$.

Let $S=\ta B_2(128)$ and $p=113$ or $127$. If $G=S$ then we have
$k_{p'}(G)>2\sqrt{p-1}$ as analyzed in
Section~\ref{section-exceptional} (1). So suppose $G>S$ and thus
$G=\Aut(S)=S\rtimes C_7$. First we note that the trivial and
Steinberg characters of $S$ have rational extensions to $\Aut(S)$.
Also, $S$ has a rational class of elements of order $5$ that is
$\Aut(S)$-invariant, this semisimple class corresponds to a rational
semisimple character of $S$ of odd degree, which therefore has a
rational extension to $\Aut(S)$ as well. By Gallagher's theorem, we
obtain 21 irreducible characters of $G$ with values in $\QQ_7$.

As mentioned in Section~\ref{section-exceptional}, $\Aut(S)$ has
four orbits of size 7 on classes of elements of order 145. One
(semisimple) element in such an orbit produces an irreducible
semisimple character of $S$ with values in $\QQ_{145}$ and moreover
has $S$ as the stabilizer group in $\Aut(S)$, and thus gives rise to
1 irreducible character of $\Aut(S)$ with values in
$\QQ(\chi)\subseteq\QQ_{145}$, by Clifford's theorem. We now have 4
more irreducible $p$-rational characters of $G=\Aut(S)$, different
from the 21 characters produced in the previous paragraph. We have
shown that $G$ has at least $25$, which is larger than $2\sqrt{p-1}$,
$p$-rational irreducible characters.

For $(S,p)=(\Omega_8^-(4),257)$ we know that $S$ has exactly
$32=2\sqrt{p-1}$ different element orders coprime to $p$ (these
orders are listed in the proof of Lemma~\ref{lemma-POmega8-}), and
so clearly $k_{p'}(G)>32$ if $G\neq S$ since there is at least one
$p$-regular class of $G$ outside $S$. In fact we still have
$k_{p'}(G)>32$ when $G=S$ since $S$ has at least four unipotent
classes by Lemma~\ref{lemma-unipotent-class} and at least 29
semisimple classes coming from 29 different odd element orders
coprime to $p$.
\end{proof}

We are now in position to prove
Theorem~\ref{theorem-general-bound-character} for all finite groups.

\begin{theorem}
Let $G$ be a finite group and $p$ a prime divisor of $|G|$. Then
\[
|\Irr_{\mathrm{p-rat}}(G)\cup\Irr_{\mathrm{p'-rat}}(G)|\geq 2\sqrt{p-1}.
\]
Moreover, the equality occurs if and only if $\sqrt{p - 1}$ is an
integer, $G = C_p \rtimes C_{\sqrt{p-1}}$ and $\bC_G(C_p) = C_p$.
\end{theorem}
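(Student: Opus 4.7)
The plan is to induct on $|G|$, mirroring the structure of the proof of Theorem~\ref{theorem-general-bound-class} and using the character-theoretic ingredients built up in this section. By Lemma~\ref{lemma-p=2} we may assume $p$ is odd, and by Theorem~\ref{theorem-p-rat-solvable} we may assume $G$ is non-$p$-solvable. For any $N \lhd G$, inflation gives
\[
\Irr_{\mathrm{p-rat}}(G/N) \subseteq \Irr_{\mathrm{p-rat}}(G), \quad \Irr_{\mathrm{p'-rat}}(G/N) \subseteq \Irr_{\mathrm{p'-rat}}(G),
\]
so the union on $G/N$ bounds the union on $G$ from below. By the inductive hypothesis, unless $G/N$ realizes the minimal configuration $C_p \rtimes C_{\sqrt{p-1}}$ or $p \nmid |G/N|$, the bound for $G$ already follows. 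After handling the equality case (see the final paragraph), we reduce to the situation where $G$ has a unique minimal normal subgroup $M$ with $p \mid |M|$ and $p \nmid |G/M|$.

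Suppose first $M$ is abelian, so that $M$ is elementary abelian of $p$-power order, and by Schur--Zassenhaus $G = M \rtimes H$ where $H$ acts faithfully and irreducibly on $V := M$ over $\mathbb{F}_p$. Lemma~\ref{lemma-Maroti} gives $k(H) + n(H,V) - 1 \geq 2\sqrt{p-1}$, with equality only when $|V| = p$ and $|H| = \sqrt{p-1}$. Arguing as in the proof of Theorem~\ref{theorem-p-rat-solvable}, the $k(H)$ characters of $H$ inflated to $G$ are $p$-rational, while the $n(H,V) - 1$ non-trivial $H$-orbits on $\Irr(V)$ yield, via canonical extension and Clifford induction, $n(H,V) - 1$ distinct characters $\widehat{\theta_i}^G \in \Irr(G)$ with values in $\QQ_p$, hence $p'$-rational. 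These two families are disjoint, since only the former contains $V$ in its kernel, and the equality analysis transfers directly from Lemma~\ref{lemma-Maroti} to recover the claimed structure. If $M$ is non-abelian, then $M \cong S^t$ for a non-abelian simple $S$ with $p \mid |S|$; when $t \geq 2$, Lemma~\ref{lemma-p-rat-numberfactor>2} yields strict inequality, and when $t = 1$, uniqueness of $M$ forces $\bC_G(M) = 1$, so $G$ is almost simple and strict inequality follows from Theorem~\ref{theorem-p-rat-almostsimple}.

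The principal obstacle is the equality bookkeeping in the inductive reduction: if $G/N \cong C_p \rtimes C_{\sqrt{p-1}}$ saturates the bound while $N \neq 1$, one must exhibit at least one genuinely new $p$- or $p'$-rational character of $G$ beyond the inflated ones. This is the analog of the clean class-counting step in the proof of Theorem~\ref{theorem-general-bound-class}, but it is more delicate on the character side because an arbitrary character of $G$ not inflated from $G/N$ need not be $p$- or $p'$-rational. The strategy is to leverage Lemma~\ref{lemma-p-rat-regular}: when $p \nmid |N|$, one has $|\Irr_{\mathrm{p-rat}}(G)| \geq k_{p'}(G) > k_{p'}(G/N) = |\Irr_{\mathrm{p-rat}}(G/N)|$ (since equality of the values forces $G/N$ to be the minimal example where both quantities agree), producing the required extra $p$-rational character; when $p \mid |N|$, one instead produces an additional $p'$-rational character via Clifford lifts of a non-trivial character of $N$ whose values lie in a cyclotomic field of $p$-power conductor. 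Carrying out this last dichotomy cleanly is expected to be the main technical step.
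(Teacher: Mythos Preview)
Your overall strategy matches the paper's: induct on $|G|$, dispose of $p=2$ via Lemma~\ref{lemma-p=2}, use inflation and the inductive hypothesis to reduce to a unique minimal normal $M$ with $p\mid |M|$ and $p\nmid |G/M|$, and then invoke Lemma~\ref{lemma-Maroti}, Lemma~\ref{lemma-p-rat-numberfactor>2}, and Theorem~\ref{theorem-p-rat-almostsimple} for $M$ abelian, $M\cong S^t$ with $t\ge 2$, and $M=S$ respectively. The divergence is in the equality step when $G/N\cong C_p\rtimes C_{\sqrt{p-1}}$: you split on $p\mid |N|$ versus $p\nmid |N|$, whereas the paper splits on $N$ abelian versus non-abelian.

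Your $p\nmid |N|$ argument via Lemma~\ref{lemma-p-rat-regular} is valid and is a pleasant alternative to the paper's treatment; it works because for this specific quotient one has $k_{p'}(G/N)=|\Irr_{\mathrm{p-rat}}(G/N)|=\sqrt{p-1}$, so $|\Irr_{\mathrm{p-rat}}(G)|\ge k_{p'}(G)>k_{p'}(G/N)$ forces a $p$-rational character of $G$ not inflated from $G/N$. But your $p\mid |N|$ proposal has a real obstacle: when $N$ is non-abelian (say $N\cong S^t$ with $p\mid |S|$), there is no reason a non-trivial $\theta\in\Irr(N)$ with $\QQ(\theta)$ contained in a $p$-power cyclotomic field should exist, so the proposed Clifford lift does not produce a $p'$-rational character. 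The paper's fix for non-abelian $N$ is to invoke \cite[Lemma~4.1]{HSTR20} (as in the proof of Lemma~\ref{lemma-p=2}): $N$ possesses a non-trivial irreducible character extendible to a rational character of its inertia subgroup in $G$, giving a rational character of $G$ not inflated from $G/N$. For abelian $N$ the paper also argues differently from your sketch, analyzing whether the normal $C_p\le G/N$ acts fixed-point-freely on $N$ and producing an extra $p$-rational (not $p'$-rational) character; this sidesteps the coprimality hypothesis needed for canonical extensions, which your approach would otherwise have to confront since $p\mid |G/N|$ here.
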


\begin{proof}
First we prove the inequality. The case $p=2$ is done by
Lemma~\ref{lemma-p=2}, so we will assume that $p$ is odd. We proceed
in the same way as in the proof of
Theorem~\ref{theorem-p-rat-solvable} to come up with the situation
where $G$ has a unique minimal normal subgroup $N$ of order
divisible by $p$ such that $p\nmid |G/N|$. If $N$ is abelian then we
are done by Theorem~\ref{theorem-p-rat-solvable}, so we assume
furthermore that $N$ is nonabelian, which means that $N$ is
isomorphic to a direct product of say $k$ copies of a nonabelian
simple group $S$.

If $k \geq 2$, then we are done by
Lemma~\ref{lemma-p-rat-numberfactor>2}. On the other hand, if $N=S$
then $G$ is an almost simple group with socle $S$, and thus we are
done as well by Theorem~\ref{theorem-p-rat-almostsimple}. This
completes the proof of the first part of the theorem.

We now move on to prove the second part. If $\sqrt{p-1}$ is an
integer, $G = C_{p} \rtimes C_{\sqrt{p-1}}$ and $\bC_G(C_p) = C_p$,
then
$$|\Irr_{\mathrm{p-rat}}(G)\cup\Irr_{\mathrm{p'-rat}}(G)| = |\Irr(G)| = 2 \sqrt{p-1}.$$
Assume that $G$ is different from the group $C_{p} \rtimes
C_{\sqrt{p-1}}$ with $\bC_G(C_p) = C_p$ in case $\sqrt{p-1}$ is an
integer. We proceed to prove by induction on the size of $G$ that
$|\Irr_{\mathrm{p-rat}}(G)\cup\Irr_{\mathrm{p'-rat}}(G)| > 2\sqrt{p-1}$. This is clear
in case $G$ is a cyclic group of order $p$ (excluding the case
$p=2$).

Let $N$ be a minimal normal subgroup of $G$. We have
$\Irr_{\mathrm{p-rat}}(G/N) \subseteq \Irr_{\mathrm{p-rat}}(G)$, $\Irr_{\mathrm{p'-rat}}(G/N)
\subseteq \Irr_{\mathrm{p'-rat}}(G)$ and
$$\Irr_{\mathrm{p-rat}}(G/N)\cap\Irr_{\mathrm{p'-rat}}(G/N) = \Irr_{\QQ}(G/N) \subseteq
\Irr_{\QQ}(G) = \Irr_{\mathrm{p-rat}}(G)\cap\Irr_{\mathrm{p'-rat}}(G).$$ We may assume
by induction that $p \nmid |G/N|$, or that $\sqrt{p-1}$ is an
integer, $G/N = C_{p} \rtimes C_{\sqrt{p-1}}$ with $\bC_{G/N}(C_p) =
C_p$.

In fact the case $p \nmid |G/N|$ is done by
Lemma~\ref{lemma-p-rat-numberfactor>2} and
Theorem~\ref{theorem-p-rat-almostsimple} when $N$ is non-abelian and
by Lemma~\ref{lemma-Maroti} when $N$ is abelian.

Assume that the latter case holds. First suppose that $N$ is an
elementary abelian $r$-group. It then may be viewed as an
irreducible $G/N$-module. If the cyclic normal subgroup $C_p$ of
$G/N$ acts fixed-point-freely on $N$ and thus also on $\Irr(N)$,
then there must be at least $1$ $p$-rational irreducible character
of $G$ by Clifford's theorem which does not contain $N$ in its
kernel. This proves the desired bound. Assume that the cyclic normal
subgroup $C_p$ of $G/N$ has a non-trivial fixed point on $N$. In
this case $|N| = r$ and $G$ contains an abelian normal subgroup $M$
of order $rp$. Just as before, there is at least $1$ $p$-rational
irreducible character of $G$ by Clifford's theorem which does not
contain $N$ in its kernel. Next we suppose $N$ is non-abelian. As
mentioned in the proof of Lemma~\ref{lemma-p=2}, $N$ has an
irreducible character that is extendible to a rational character of
its inertia subgroup in $G$, and thus producing a rational
irreducible character of $G$. In either case we always have
\begin{align*}|\Irr_{\mathrm{p-rat}}(G)\cup\Irr_{\mathrm{p'-rat}}(G)| &\geq
|\Irr_{\mathrm{p-rat}}(G/N)\cup\Irr_{\mathrm{p'-rat}}(G/N)|+
|\Irr_{\mathrm{p-rat}}(G|N)|\\&\geq 2\sqrt{p-1}+1,\end{align*} as wanted. The
proof is completed.
\end{proof}

We conclude by remarking that although $|\Irr_{\mathrm{p-rat}}(G)|\geq
k_{p'}(G)$ by Lemma~\ref{lemma-p-rat-regular} and
$|\Irr_{\mathrm{p'-rat}}(G)|$ is conjecturally at least $1+k_p(G)$ (see the
discussion after Lemma~\ref{lemma-p-rat-regular}), it does not
follow that $|\Irr_{\mathrm{p-rat}}(G)\cup\Irr_{\mathrm{p'-rat}}(G)|\geq
k_{p'}(G)+k_p(G)$, as $\Irr_{\mathrm{p-rat}}(G)$ and $\Irr_{\mathrm{p'-rat}}(G)$ have
those rational characters, including the trivial character, in
common. However, at the time of this writing, we have not found a
counterexample yet.

%%%%%%%%%%%%%%%%%%%%%%%%%%%%%%%%%%%%%%%%%%%%%%%%%%%%%%%%%%%%%%%%%%%%%%%%%%%%%%%

\end{document}